\documentclass[]{interact}

\usepackage{epstopdf}
\usepackage[caption=false]{subfig}
\usepackage{placeins} 
\usepackage{float} 
\usepackage[numbers,sort&compress]{natbib}
\bibpunct[, ]{[}{]}{,}{n}{,}{,}
\renewcommand\bibfont{\fontsize{10}{12}\selectfont}
\makeatletter
\def\NAT@def@citea{\def\@citea{\NAT@separator}}
\makeatother
\usepackage{mathrsfs}
\theoremstyle{plain}
\newtheorem{theorem}{Theorem}[section]
\newtheorem{lemma}[theorem]{Lemma}
\newtheorem{corollary}[theorem]{Corollary}
\newtheorem{proposition}[theorem]{Proposition}

\theoremstyle{definition}
\newtheorem{definition}[theorem]{Definition}

\theoremstyle{remark}
\newtheorem{remark}{Remark}

\DeclareMathOperator{\supp}{supp}
\DeclareMathOperator{\soft}{soft}
\DeclareMathOperator{\co}{co}
\newcommand{\R}{\mathbb R}
\newcommand{\norm}[1]{\left\|#1\right\|}
\newcommand{\ip}[2]{\left\langle #1,#2\right\rangle}
\newcommand{\eps}{\varepsilon}
\begin{document}


\title{Explicit Criteria for Finite-Time Identification
in Sparse Primal--Dual Dynamics}

\author{
\name{H. Saoud \thanks{CONTACT H. Saoud Email: saoud.h@gust.edu.kw}}
\affil{Department of Mathematics and Natural Sciences, Gulf University for Science and Technology, P.O. Box 7207, Hawally 32093, Kuwait.}
}

\maketitle

\begin{abstract}
We study when selected coordinates become and remain zero
in a finite-dimensional sparse-regression primal--dual flow
and its exact full resolvent iteration. Invariant balls
centered at \((0,z)\), with \(z\) strictly dual feasible,
give computable deadlines without rank assumptions or
optimizer knowledge. The certificates may be available
at initialization. We determine the smallest invariant
radius for the continuous flow within this family.
For a prescribed positive feasibility margin, center
selection is convex and admits an exact finite conic
formulation. We bound the coordinate margins for all
centers in this family and give a normalized correlated
example attaining the bound. A composite Bregman Lyapunov
estimate proves rank-free convergence without primal
uniqueness and finite identification under strict dual
margins. Under restricted injectivity, the discrete
gap-based deadlines multiplied by the step size tend
to the continuous bound for fixed data, observed state,
and certificate parameters. Numerical experiments assess
certificate timing. A noisy weak-form oscillator regression
illustrates the difference between fitted and physical support.
\end{abstract}

\begin{keywords}
Maximal monotone inclusion; Bregman remainder;
Fej\'er monotonicity; sparse regression; permanent-zero certificate;
primal--dual resolvent; feasible center.
\end{keywords}
\begin{amscode}
Primary 90C25; Secondary 34A60, 47H05, 49J52, 65K05.
\end{amscode}

\section{Introduction}\label{sec:introduction}

The \emph{least absolute shrinkage and selection operator}
(LASSO), introduced by Tibshirani in 1996 \cite{Tibshirani},
adds an \(\ell^1\) penalty to a least-squares objective.
This penalty shrinks the fitted coefficients and can set some
exactly to zero. It therefore selects features from a prescribed
dictionary. One application is the estimation of governing
equations from measurements, using candidate nonlinear terms
\cite{Brunton}. A weak formulation builds the regression by
integrating against test functions. This avoids pointwise
differentiation of noisy measurements \cite{Messenger}.
Once the dictionary and observations are fixed, the regression
problem is finite dimensional. We study the optimization dynamics
used to solve it and, in particular, when their coordinates
become and remain zero. With noisy data, a zero fitted coefficient
need not represent a term absent from the physical system.

Let \(m,p\ge1\) be integers, \(B\in\R^{m\times p}\),
\(b\in\R^m\), and \(\lambda,\gamma>0\). With
\(g(x)=\lambda\norm{x}_1\), we consider the primal--dual
differential inclusion
\begin{equation}\label{flow}
\dot x(t)+\partial g(x(t))\ni-B^\top y(t),
\qquad
\dot y(t)=Bx(t)-b-\gamma y(t),
\end{equation}
where \(\partial g\) is the convex subdifferential of \(g\).
This is the saddle evolution associated with
\(\mathcal L(x,y)=g(x)+\ip{y}{Bx-b}-\gamma\norm{y}^2/2\).
At equilibrium, the primal component minimizes
\begin{equation}\label{primal}
P(x)=\lambda\norm{x}_1+\frac{1}{2\gamma}\norm{Bx-b}^2,
\qquad x\in\R^p.
\end{equation}
Each \(x_i\) is the coefficient of the feature \(B_i\).
The equilibrium relation \(y=(Bx-b)/\gamma\) identifies
the dual variable as a scaled residual. Time in \eqref{flow}
measures the progress of optimization. It is distinct from
the physical time used in the oscillator application.

We also study the exact resolvent iteration for the full saddle
operator. For any fixed step \(s>0\), both variables are updated
implicitly and jointly. This scheme differs from forward--backward
and semi-implicit iterations. Well-posedness of \eqref{flow}
and nonexpansiveness of the resolvent follow from classical
maximal-monotone theory \cite{Brezis,Rockafellar}.
These properties are the starting point for our estimates.

Knowing that a coefficient is zero at an optimizer does not
give the time when a trajectory reaches zero. A qualitative
finite-identification result may also leave this time unknown.
We seek bounds that can be evaluated from the problem data
\(B,b,\lambda,\gamma\), an observed state, and a chosen index
set \(I\). Our conditions ensure that \(x_i(t)=0\), or
\(x_i^k=0\), for every \(i\in I\) beyond an explicit time
or iteration bound. They apply to the original dynamics,
without clamping selected coordinates.

We distinguish the time when a certificate becomes available
from the deadline it gives. A certificate can predict a future
zero or establish permanence after a zero has been observed.
The numerical experiments report both times.

Safe screening identifies coefficients that vanish at an
optimizer. Ndiaye, Fercoq, Gramfort, and Salmon construct
a duality-gap sphere containing the dual optimum
\cite[Theorem~6]{Ndiaye}. They also prove finite identification
of the equicorrelation set by converging safe active sets
\cite[Proposition~14]{Ndiaye}. Dynamic screening updates
this information during optimization \cite{Bonnefoy}.
Dantas, Soubies, and F\'evotte extend the Gap Safe framework
using local dual strong concavity and refine the safe spheres
\cite[Theorem~5 and Corollary~6]{Dantas}.
Gap radii and optimizer exclusion rules are thus established
tools. We combine them with a bound on the future saddle
trajectory to obtain identification deadlines.

Finite identification has also been studied for optimization
algorithms and dynamical systems. Under partial smoothness,
nondegeneracy, and their algorithmic assumptions, Liang,
Fadili, and Peyr\'e prove finite manifold identification
for primal--dual splitting schemes
\cite[Theorem~3.3]{Liang}. They obtain local linear estimates,
including convergence of the full iterates in the locally
polyhedral case \cite[Theorem~3.11]{Liang}.
As noted in \cite[Remark~3.4(iii)]{Liang}, finite identification
need not provide a bound on the identification index.
Related results are available for thresholding-gradient
methods \cite{Garrigos}. Lewis and Tian study identifiability
of subgradient curves from a geometric perspective
\cite{LewisTian}.

D\'iaz, Izquierdo Lehmann, Lu, and Yang obtain nonasymptotic
identification bounds without strict complementarity,
under their geometric and algorithmic assumptions.
Their functional-inequality framework includes the proximal
point method \cite[Theorems~4.2 and~4.3]{Diaz}.
Their results concern constraint and multiplier structure;
we consider zero primal coefficients in \eqref{flow}.
An epigraph reformulation of the \(\ell^1\) penalty changes
the state space and generally changes the iterated operator.
Applying those results here therefore requires checking
the assumptions and the resulting dynamics.

For the quadratic--\(\ell^1\) model, the saddle operator is
piecewise polyhedral. On a bounded set containing the iterates,
a suitable error bound yields linear convergence of the
proximal point method through metric subregularity.
Lu and Qu discuss polyhedral saddle operators and LASSO
problems and derive the corresponding contraction estimate
\cite[Section~2.2, Lemma~2 and Theorem~2]{LuQu}.
This convergence mechanism is established.
We use restricted-rank estimates to express identification
deadlines directly in the problem constants, without an
unspecified error-bound modulus. The resulting bounds
are computable, although they can be conservative.

For the continuous flow, the squared distance to an equilibrium
is nonincreasing, but its derivative may vanish away from
equilibrium. We add a cross term and the \(\ell^1\) Bregman
remainder. The nonsmooth chain rule lets the remainder cancel
the subgradient coupling introduced by the cross term
(Theorem~\ref{thm:composite}).
This construction is related to the author's earlier work
\cite{SaoudComp,SaoudGeo,SaoudLoc}.
It gives convergence without rank assumptions or primal
uniqueness, and finite identification under strict dual margins.
The same cancellation gives composite dissipation for the
exact full resolvent at every \(s>0\)
(Proposition~\ref{prop:discrete-composite}).
Under strict margins and restricted injectivity, these
estimates also give continuous and discrete decay rates.

Invariant balls bound all future dual states.
For centers \((0,z)\) with
\(\norm{B^\top z}_\infty<\lambda\),
Theorem~\ref{lem:radial} determines the smallest invariant
radius \(R_\sharp(z)\) for the continuous flow within this
family. Balls of radius \(R\ge R_\sharp(z)\) are also invariant
under each exact full resolvent.
If such a ball contains the observed state and
\(|B_i^\top z|+\norm{B_i}R<\lambda\) for \(i\in I\),
Theorem~\ref{thm:ball} gives explicit hitting and permanence
deadlines. This test requires no optimal pair, duality gap,
or restricted singular value.

For a prescribed positive lower bound on
\(\lambda-\norm{B^\top z}_\infty\), center selection is convex
and has an exact finite formulation using semidefinite and
second-order cone constraints
(Propositions~\ref{prop:convex-center-design}
and~\ref{prop:finite-center-sdp}).
Corollary~\ref{cor:optimal-deadline-design} gives a corresponding
formulation for the deadline.
A positive certificate need not exist in the chosen
feasibility class.
Proposition~\ref{prop:obstruction} rules out the zero-center
test for equal-norm columns when a nonzero minimizer exists.
Proposition~\ref{prop:all-center-obstruction} bounds the margins
for every center in this ball family.
The normalized correlated example in
Proposition~\ref{prop:centerexample} attains the largest
coordinate margin within the family and improves the certified
deadline for the same initial state and trajectory.

The gap-based approach bounds future dual states using an
optimizer screen and a distance bound to an equilibrium
(Theorem~\ref{thm:gapcert}).
Proposition~\ref{prop:free} gives a rank-free alternative.
The sharper enclosure and the eventual deadline in
Theorem~\ref{thm:eventual} require \(B_{I^c}\) to be injective
when \(I^c\ne\varnothing\).
The eventual bound applies even when the immediate
future-trajectory margin is nonpositive.
For fixed data, observed state, and certificate parameters,
the discrete bound multiplied by \(s\) converges to the
continuous bound as \(s\downarrow0\)
(Remark~\ref{rem:small}).

The discrete future-trajectory and invariant-ball tests also
allow inexact dual solves, provided primal soft thresholding
is exact and a deterministic bound on the sum of all future
dual residual norms is known.
Small residuals over a finite run do not establish this budget.
The experiments compare certificate availability and deadlines
with observed zeros in a normalized analytic example,
an underdetermined design, a measured dataset, and a simulated
noisy weak-form oscillator regression.
The oscillator illustrates permanence after an observed zero
and the difference between fitted and physical support.

The paper is organized as follows.
Section~\ref{sec:preliminaries} gives the preliminaries.
Section~\ref{sec:lyapunov} develops the Lyapunov and convergence
analysis.
Section~\ref{sec:gap-bounds} presents the gap-based certificates
and resolvent estimates.
Section~\ref{sec:invariant-balls} studies invariant balls,
center selection, and their limitations.
Section~\ref{sec:numerics} gives the numerical method,
experiments, and reproduction protocol.
Section~\ref{sec:conclusions} summarizes the results and scope.
Arguments using bounded subsequences rely on finite-dimensional
compactness; no infinite-dimensional extension is asserted.

\section{Problem setting and preliminaries}\label{sec:preliminaries}
We collect the notation, optimality conditions, and basic estimates
used in the convergence proofs.

We work in the finite-dimensional space
\(\mathcal H=\R^p\times\R^m\), with \(Z=(x,y)\) and
\(\norm Z^2=\norm x^2+\norm y^2\).
Unsubscripted vector norms are Euclidean, and matrix norms are
the corresponding induced norms.
We denote column \(i\) of \(B\) by \(B_i\), set \(L=\norm B\),
and write \(I^c=\{1,\ldots,p\}\setminus I\).
We also use \(\norm w_\infty=\max_i|w_i|\),
\(\norm w_1=\sum_i|w_i|\), and \((a)_+=\max\{a,0\}\).

For \(\lambda,\gamma>0\), recall from \eqref{primal} that
\(g(x)=\lambda\norm{x}_1\) and
\(P(x)=g(x)+\norm{Bx-b}^2/(2\gamma)\).
The Fenchel dual is
\begin{equation}\label{dual}
\mathcal D=\{z\in\R^m:\norm{B^\top z}_\infty\le\lambda\},
\qquad
D(z)=-\ip b z-\frac{\gamma}{2}\norm z^2.
\end{equation}
The bound \(P(x)\ge\lambda\norm{x}_1\) makes \(P\) coercive.
Since \(P\) is continuous, it attains its minimum on \(\R^p\).
At any primal minimizer \(x_*\), the optimality condition gives
\begin{equation}\label{kkt}
y_*=\frac{Bx_*-b}{\gamma},\qquad
\xi_*:=-B^\top y_*\in\partial g(x_*).
\end{equation}
The bound \(\norm{\xi_*}_\infty\le\lambda\) gives
\(y_*\in\mathcal D\).
For every \(x\in\R^p\) and \(z\in\mathcal D\), we have
\[
P(x)\ge
g(x)+\ip{z}{Bx-b}-\frac{\gamma}{2}\norm z^2
\ge D(z).
\]
The first inequality follows by completing the square.
The second follows from dual feasibility.
Both are equalities at \((x_*,y_*)\), since
\(Bx_*-b=\gamma y_*\) and
\(\ip{\xi_*}{x_*}=g(x_*)\), the homogeneity identity for \(g\).
Thus \(\min P=\max_{\mathcal D}D\).
The \(\gamma\)-strong concavity of \(D\) on the convex set
\(\mathcal D\) makes its maximizer \(y_*\) unique.
We write the nonempty equilibrium set as
\[
\Omega=\{(x_*,y_*):x_*\in\operatorname{argmin}P\}.
\]

\begin{definition}[Bregman remainder]\label{def:breg}
Let \(f:\R^n\to(-\infty,+\infty]\) be proper and convex,
\(x,x_*\in\operatorname{dom}f\), and
\(\xi_*\in\partial f(x_*)\).
Its subgradient Bregman remainder is
\[
D_f^{\xi_*}(x,x_*):=
f(x)-f(x_*)-\ip{\xi_*}{x-x_*}\ge0.
\]
This extends the classical Bregman construction to a selected
subgradient \cite{Bregman,Burger}.
It is generally asymmetric and need not be a metric.
For \(f=g\), we write \(h(x):=D_g^{\xi_*}(x,x_*)\).
\end{definition}

By positive homogeneity, the subgradient inequalities at \(0\)
and \(2x_*\) give \(\ip{\xi_*}{x_*}=g(x_*)\).
Hence \(h(x)=g(x)-\ip{\xi_*}{x}\) is convex.
Since \(\xi_*=-B^\top y_*\) is unique, \(h\) is independent
of the chosen primal minimizer.
For any \(x,x^+\in\R^p\) and \(\xi^+\in\partial g(x^+)\),
the subgradient inequality at \(x^+\) gives
\begin{equation}\label{eq:bregman-endpoint}
h(x^+)-h(x)\le
\ip{\xi^+-\xi_*}{x^+-x}.
\end{equation}

\begin{definition}[Fej\'er monotonicity]\label{def:fejer}
A sequence \(Z^k\) is Fej\'er monotone relative to a nonempty
set \(\Omega\) if
\(\norm{Z^{k+1}-Z_*}\le\norm{Z^k-Z_*}\)
for every \(Z_*\in\Omega\) and every \(k\).
For a flow, this means that
\(t\mapsto\norm{Z(t)-Z_*}\) is nonincreasing
for each \(Z_*\in\Omega\).
We use squared distances, which give an equivalent formulation.
Since \(\Omega\ne\varnothing\), Fej\'er monotonicity implies
boundedness.
\end{definition}

The next lemma gives the basic energy estimate and the chain
rule for \(h\) along the flow.

\begin{lemma}[Evolution and nonsmooth chain rule]\label{lem:basic}
For every \((x^0,y^0)\in\mathcal H\), \eqref{flow} has a unique
global strong solution.
It is globally Lipschitz on \([0,\infty)\) and satisfies the
inclusion almost everywhere.
Fix an equilibrium \((x_*,y_*)\), and set
\(u=x-x_*\), \(v=y-y_*\), and
\[
\xi(t)=-\dot x(t)-B^\top y(t)\in\partial g(x(t)),\qquad
q(t)=\xi(t)-\xi_*\quad\text{a.e.}
\]
Then \(\norm q\le2\lambda\sqrt p\) almost everywhere, and
\begin{equation}\label{basicE}
E=\frac12(\norm u^2+\norm v^2),\qquad
\dot E=-\ip u q-\gamma\norm v^2\le0\quad\text{a.e.}
\end{equation}
Moreover, \(h(x(t))\) is locally absolutely continuous, with
\begin{equation}\label{chain}
\begin{aligned}
\frac{d}{dt}h(x(t))
  &=\ip q{\dot x}\quad\text{a.e.},\\
0\le h(x)
  &\le\ip{x-x_*}{\xi-\xi_*}
    \quad(x\in\R^p,\ \xi\in\partial g(x)).
\end{aligned}
\end{equation}
In particular, the solution is bounded and Fej\'er monotone
relative to \(\Omega\).
\end{lemma}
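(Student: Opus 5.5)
We write \eqref{flow} as the evolution inclusion \(\dot Z+A(Z)\ni0\) on \(\mathcal H\), where
\[
A(x,y)=\bigl(\partial g(x)+B^\top y\bigr)\times\bigl(-Bx+b+\gamma y\bigr).
\]
The operator \(A\) is the sum of \(\partial g\times\{0\}\), which is maximal monotone, and a Lipschitz affine map. The linear part of that map is \((x,y)\mapsto(B^\top y,-Bx+\gamma y)\), whose skew part cancels, so \(\ip{(u,v)}{(B^\top v,-Bu+\gamma v)}=\gamma\norm v^2\ge0\). Hence \(A\) is maximal monotone with \(\operatorname{dom}A=\mathcal H\), since \(\operatorname{dom}\partial g=\R^p\).

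Brezis' theory for maximal monotone operators \cite{Brezis} then gives, for every \(Z^0\in\overline{\operatorname{dom}A}=\mathcal H\), a unique solution. Because \(Z^0\in\operatorname{dom}A\), this solution is a strong solution that is Lipschitz on \([0,\infty)\) with \(\norm{\dot Z(t)}\le\norm{A^\circ(Z^0)}\) and satisfies the inclusion a.e. Consequently \(\xi=-\dot x-B^\top y\in\partial g(x)\) a.e. Every subgradient of \(g\) lies in \([-\lambda,\lambda]^p\), and this includes \(\xi_*\). Therefore \(\norm{q}\le\norm{\xi}+\norm{\xi_*}\le2\lambda\sqrt p\).

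For the energy identity, differentiate \(E\) a.e. using \(\dot u=\dot x=-\xi-B^\top y\) and \(\dot v=Bx-b-\gamma y\). By \eqref{kkt} we have \(-\xi_*-B^\top y_*=0\) and \(Bx_*-b-\gamma y_*=0\), so subtracting these gives \(\dot u=-q-B^\top v\) and \(\dot v=Bu-\gamma v\). Then
\[
\dot E=-\ip uq-\ip u{B^\top v}+\ip v{Bu}-\gamma\norm v^2=-\ip uq-\gamma\norm v^2.
\]
Monotonicity of \(\partial g\) gives \(\ip uq\ge0\), so \(\dot E\le0\). This is Fejér monotonicity for each \(Z_*\in\Omega\), and since \(\Omega\neq\varnothing\) it implies boundedness.

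For the chain rule, \(h=g-\ip{\xi_*}{\cdot}\) is convex and finite, hence locally Lipschitz, and \(x\) is Lipschitz. Therefore \(h\circ x\) is locally absolutely continuous and differentiable a.e. At any time \(t\) where both \(x\) and \(h\circ x\) are differentiable and \(\xi(t)\in\partial g(x(t))\), note that \(\xi-\xi_*\in\partial h(x(t))\). The subgradient inequality applied with increments \(\pm\tau\dot x\) gives
\[
\frac{h(x(t\pm\tau))-h(x(t))}{\pm\tau}\gtrless\ip{q}{\tfrac{x(t\pm\tau)-x(t)}{\pm\tau}}.
\]
The forward quotient is bounded below and the backward quotient bounded above by the same expression. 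Letting \(\tau\downarrow0\) yields \(\frac{d}{dt}h(x)=\ip q{\dot x}\). This is the standard Brezis lemma, and it is the only point requiring care. One must use one-sided limits with a fixed subgradient and a full-measure set of common differentiability, so the measurability of the selection \(\xi\) is never needed beyond its definition through \(\dot x\).

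Finally, \(h\ge0\) because \(\xi_*\in\partial g(x_*)\) and \(h(x)=D_g^{\xi_*}(x,x_*)\). For the upper bound, apply \eqref{eq:bregman-endpoint} with \(x^+=x\), \(\xi^+=\xi\), and base point \(x_*\), using \(h(x_*)=0\). This gives \(h(x)\le\ip{\xi-\xi_*}{x-x_*}\).
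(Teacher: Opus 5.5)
Your proof is correct. Most of it coincides with the paper's argument: well-posedness via maximality of the perturbed operator and Brezis' Cauchy theorem, the energy identity via cancellation of the skew terms, and the two-sided bound on $h$. Your use of \eqref{eq:bregman-endpoint} with base point $x_*$ is just the paper's pair of subgradient inequalities repackaged.

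The genuine difference is the chain rule. The paper argues coordinatewise from the separable form of $g$. Where $x_i(t)\ne0$, the subgradient is forced to equal $\lambda\operatorname{sign}(x_i(t))$. On the zero set of $x_i$, both $\frac{d}{dt}|x_i|$ and $\dot x_i$ vanish almost everywhere. Hence $\frac{d}{dt}\lambda|x_i|=\xi_i\dot x_i$ for every admissible $\xi_i$.

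You instead use the general sandwich argument behind Brezis' lemma. At a time where $x$ and $h\circ x$ are both differentiable, any element of $\partial h(x(t))$ bounds the forward difference quotient from below and the backward one from above.

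Your route needs only finiteness and convexity of the penalty. It is the classical argument the paper alludes to in Remark~\ref{rem:nonsmooth-scope}, it extends verbatim to any finite convex penalty, and it is equally selection-independent. The paper's route is more elementary and gives the per-coordinate identities. The same coordinatewise reasoning reappears in the hitting arguments, such as Step~5 of Theorem~\ref{thm:convergence}.

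Two minor wording points. First, the increments in your subgradient inequality are $x(t\pm\tau)-x(t)$, not $\pm\tau\dot x$; your display has this right. Second, the maximality of $A$ should be credited to the perturbation result \cite[Corollary~2.7]{Brezis}, not stated as immediate.
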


\begin{proof}
\emph{Step 1: well-posedness.}
The operator
\[
A(x,y)=(\partial g(x)+B^\top y,\,-Bx+\gamma y+b)
\]
is the sum of the maximal-monotone operator
\(\partial g\times\{0\}\) and the continuous affine map
\(T(x,y)=(B^\top y,-Bx+\gamma y+b)\).
For \(Z=(x,y)\) and \(\widehat Z=(\widehat x,\widehat y)\),
\[
\ip{T(Z)-T(\widehat Z)}{Z-\widehat Z}
=\gamma\norm{y-\widehat y}^2\ge0.
\]
Thus \(T\) is monotone and everywhere defined.
The sum theorem \cite[Corollary~2.7]{Brezis} makes \(A\)
maximal monotone, with \(\operatorname{dom}A=\mathcal H\).
The Cauchy theorem \cite[Theorem~3.1]{Brezis} then gives
the unique global strong solution and its global Lipschitz
regularity.

\emph{Step 2: energy and remainder bounds.}
The KKT relations give
\(\dot u=-q-B^\top v\) and \(\dot v=Bu-\gamma v\).
Taking scalar products yields \eqref{basicE}, and monotonicity
of \(\partial g\) gives its sign.
Each component of \(\xi\) and \(\xi_*\) lies in
\([-\lambda,\lambda]\), giving the bound on \(q\).
For any \(x\in\R^p\) and \(\xi\in\partial g(x)\), the inequalities
\[
g(x)\ge g(x_*)+\ip{\xi_*}{x-x_*},
\qquad
g(x_*)\ge g(x)+\ip{\xi}{x_*-x}
\]
give the pointwise bound in \eqref{chain}.
Integrating \eqref{basicE} proves Fej\'er monotonicity and
boundedness.

\emph{Step 3: nonsmooth chain rule.}
The function \(g\) is globally Lipschitz.
Thus \(g(x(t))\), and hence \(h(x(t))\), is locally
absolutely continuous.
At almost every time with \(x_i(t)\ne0\), we have
\(\xi_i(t)=\lambda\operatorname{sign}(x_i(t))\), so
\[
\frac{d}{dt}\lambda|x_i(t)|=\xi_i(t)\dot x_i(t).
\]
An absolutely continuous scalar function has zero derivative
almost everywhere on its zero set.
The same identity therefore holds almost everywhere on
\(\{t:x_i(t)=0\}\), for every admissible \(\xi_i(t)\).
Intersecting the finitely many full-measure sets and summing
over \(i\) gives the chain rule for \(g(x(t))\).
Subtracting the derivative of \(\ip{\xi_*}{x-x_*}\) gives
\eqref{chain}, without assuming differentiability of \(g\)
at coordinate zeros.
\end{proof}
\section{Lyapunov analysis and convergence}\label{sec:lyapunov}
The basic distance estimate gives boundedness, but its derivative
can vanish away from equilibrium. Adding a cross term and the
Bregman remainder gives convergence without rank assumptions
and finite identification under strict dual margins.
With restricted injectivity, we also obtain a semiglobal
exponential rate.

\begin{theorem}[Nonsmooth composite dissipation]\label{thm:composite}
Fix an equilibrium \(Z_*=(x_*,y_*)\) and use \(u,v,q,h,E\)
from Lemma~\ref{lem:basic}.
Let \(L=\norm B\) and choose \(\delta>0\) such that
\begin{equation}\label{deltacond}
\delta L<1,\qquad
\delta\left(L^2+\frac{\gamma^2}{2}\right)\le\frac\gamma2.
\end{equation}
Put \(C=\ip{Bu}{v}\) and
\(W_\delta=E+\delta(h-C)\). Then
\begin{equation}\label{compareW}
(1-\delta L)E+\delta h\le W_\delta
\le(1+\delta L)E+\delta h.
\end{equation}
For \(Z=(x,y)\), this function also has the representation
\begin{equation}\label{eq:composite-metric}
M_\delta=
\begin{pmatrix}
I_p&-\delta B^\top\\
-\delta B&I_m
\end{pmatrix},\qquad
W_\delta(Z)=
\frac12\ip{M_\delta(Z-Z_*)}{Z-Z_*}+\delta h(x).
\end{equation}
The matrix satisfies
\((1-\delta L)I_{p+m}\preceq M_\delta
\preceq(1+\delta L)I_{p+m}\).
Hence \(W_\delta\) is \((1-\delta L)\)-strongly convex, with
\begin{equation}\label{eq:composite-subgradient}
\partial W_\delta(Z)=
\bigl\{M_\delta(Z-Z_*)+\delta(\xi-\xi_*,0):
\xi\in\partial g(x)\bigr\}.
\end{equation}
Along the solution, almost everywhere,
\begin{equation}\label{dissW}
\dot W_\delta\le-h-\frac\delta2\norm{Bu}^2
-\frac\gamma2\norm v^2-\delta\norm q^2.
\end{equation}
These inequalities also hold when \(B\) is rank deficient or zero.
\end{theorem}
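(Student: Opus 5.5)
The argument has two parts. First come the algebraic and static facts \eqref{compareW}, \eqref{eq:composite-metric}, the spectral bounds on \(M_\delta\), and \eqref{eq:composite-subgradient}. Then comes the differential inequality \eqref{dissW}, obtained from Lemma~\ref{lem:basic} by differentiating each piece of \(W_\delta\) along the flow. The key point is that the cross terms produced by \(h\) and by \(C\) cancel exactly.

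\emph{Static part.} By Cauchy--Schwarz and Young, \(|C|=|\ip{Bu}{v}|\le L\norm u\norm v\le \tfrac L2(\norm u^2+\norm v^2)=LE\). Since \(h\ge0\), this gives \eqref{compareW}. Expanding \(\tfrac12\ip{M_\delta(Z-Z_*)}{Z-Z_*}\) yields \(\tfrac12\norm u^2+\tfrac12\norm v^2-\delta\ip{Bu}{v}=E-\delta C\), which is \eqref{eq:composite-metric}.

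For the spectral bounds, write \(M_\delta=I-\delta K\), where \(K\) is the symmetric block matrix with off-diagonal blocks \(B^\top\) and \(B\). Its eigenvalues are \(\pm\) the singular values of \(B\), so \(\norm K=L\), and the bounds \((1-\delta L)I\preceq M_\delta\preceq(1+\delta L)I\) follow. In particular \(M_\delta\succ0\) because \(\delta L<1\).

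Since \(h(x)=g(x)-\ip{\xi_*}{x}\) is convex and finite, \(W_\delta\) is a \((1-\delta L)\)-strongly convex quadratic plus a finite convex function, hence itself \((1-\delta L)\)-strongly convex. The finite-dimensional subdifferential sum rule, with one summand differentiable, gives \eqref{eq:composite-subgradient}, using \(\partial h(x)=\partial g(x)-\xi_*\).

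\emph{Dynamic part.} All terms are locally absolutely continuous: \(u\) and \(v\) are Lipschitz, and \(h\) is handled by Lemma~\ref{lem:basic}. We use \(\dot u=-q-B^\top v\) and \(\dot v=Bu-\gamma v\) together with \eqref{chain}, and compute the three derivatives a.e.:
\[
\frac{d}{dt}h=\ip q{\dot x}=-\norm q^2-\ip q{B^\top v},
\]
\[
\dot C=\ip{B\dot u}{v}+\ip{Bu}{\dot v}
=-\ip{Bq}{v}-\norm{B^\top v}^2+\norm{Bu}^2-\gamma\ip{Bu}{v}.
\]
Since \(\ip q{B^\top v}=\ip{Bq}v\), the subgradient couplings cancel in \(\frac d{dt}(h-C)\), leaving
\[
\frac d{dt}(h-C)=-\norm q^2+\norm{B^\top v}^2-\norm{Bu}^2+\gamma\ip{Bu}{v}.
\]
This cancellation is the step that must be checked carefully. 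It is legitimate precisely because the chain rule \eqref{chain} holds with the same selection \(\xi(t)\) that appears in the dynamics, including at coordinate zeros.

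Adding \(\dot E=-\ip uq-\gamma\norm v^2\) from \eqref{basicE}, we bound the terms one at a time:
\begin{itemize}
\item \(-\ip uq\le -h\), by the pointwise inequality in \eqref{chain};
\item \(\delta\norm{B^\top v}^2\le\delta L^2\norm v^2\);
\item \(\delta\gamma\ip{Bu}{v}\le\tfrac\delta2\norm{Bu}^2+\tfrac{\delta\gamma^2}{2}\norm v^2\), by Young's inequality.
\end{itemize}
Collecting terms gives
\[
\dot W_\delta\le -h-\tfrac\delta2\norm{Bu}^2-\delta\norm q^2-\Bigl(\gamma-\delta\bigl(L^2+\tfrac{\gamma^2}2\bigr)\Bigr)\norm v^2 .
\]
The second condition in \eqref{deltacond} makes the last coefficient at least \(\gamma/2\), which yields \eqref{dissW}.

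No step uses injectivity or nonzero rank of \(B\); when \(B=0\) all \(B\)-terms vanish and the inequalities remain valid. The only genuine subtlety is the almost-everywhere chain rule for \(h\) on the zero sets of coordinates, and that is supplied by Lemma~\ref{lem:basic}.
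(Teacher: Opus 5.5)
Your proof is correct and follows the paper's argument essentially step for step. It uses the same cancellation of the $\ip{Bq}{v}$ terms, which is valid because $\dot h$ and $\dot C$ use the same velocity-determined selection $q$. It uses the same three estimates $\ip{u}{q}\ge h$, $\norm{B^\top v}\le L\norm{v}$ and Young's inequality combined with \eqref{deltacond}, and the same bound $|C|\le LE$ and convex sum rule for the static claims. The only cosmetic difference is that you read off the spectral bounds on $M_\delta$ from the eigenvalues $\pm\sigma_i(B)$ of the off-diagonal block matrix, whereas the paper bounds the cross term $2\delta\ip{Ba}{c}$ in the quadratic form directly.
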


\begin{proof}
\emph{Step 1: cancellation and dissipation.}
Lemma~\ref{lem:basic} gives
\[
\dot h=-\norm q^2-\ip{Bq}v,\qquad
\dot C=-\ip{Bq}v-\norm{B^\top v}^2
+\norm{Bu}^2-\gamma\ip{Bu}v.
\]
Both derivatives use the same velocity-determined selection \(q\).
Subtracting cancels the \(\ip{Bq}v\) terms and gives
\[
\dot W_\delta=-\ip u q-\gamma\norm v^2+
\delta\{-\norm q^2+\norm{B^\top v}^2
-\norm{Bu}^2+\gamma\ip{Bu}v\}.
\]
Use \(\ip u q\ge h\), \(\norm{B^\top v}\le L\norm v\), and
\(\gamma\ip{Bu}v\le\norm{Bu}^2/2+\gamma^2\norm v^2/2\).
Together with \eqref{deltacond}, these bounds give \eqref{dissW}.

\emph{Step 2: comparison and convexity.}
The bound \(|C|\le L\norm u\norm v\le LE\) gives
\eqref{compareW}.
Expanding the quadratic form gives \eqref{eq:composite-metric}.
For any \(w=(a,c)\),
\[
(1-\delta L)\norm w^2
\le\norm a^2+\norm c^2-2\delta\ip{Ba}c
=\ip{M_\delta w}w
\le(1+\delta L)\norm w^2.
\]
Since \(h\) is convex, the lower bound proves strong convexity
of \(W_\delta\).
The identity \(\partial h(x)=\partial g(x)-\xi_*\) and the
convex sum rule give \eqref{eq:composite-subgradient}.
\end{proof}

\begin{remark}[Scope of the nonsmooth argument]
\label{rem:nonsmooth-scope}
The chain rule in Lemma~\ref{lem:basic} is classical for the
present globally Lipschitz convex penalty.
The homogeneity identity also holds more generally:
for any proper convex positively one-homogeneous function \(f\)
and \(\xi\in\partial f(x)\), the subgradient inequalities at
\(x/2\) and \(2x\) give \(\ip{x}{\xi}=f(x)\).
The coordinatewise absorption estimate and the invariant-ball
calculation below use the separable form
\(g=\lambda\norm\cdot_1\).
The function \(W_\delta\) is locally Lipschitz and nonsmooth
at coordinate zeros.
Its convex representation will be used in the exact resolvent
analysis.
All terms on the right of \eqref{dissW} vanish at every
equilibrium, consistent with possible primal nonuniqueness.
\end{remark}

The next example shows the limits of the simpler candidates
\(E\) and \(E-\delta C\).

\begin{proposition}[Why the Bregman correction matters]
\label{prop:smoothfail}
Let \(m=p=2\), \(B=I_2\), \(\lambda=\gamma=1\), and \(b=(2,0)\).
The pair \((x_*,y_*)=((1,0),(-1,0))\) is an equilibrium.
There are arbitrarily close nonstationary initial states with
\(\dot E(0+)=0\).
For every fixed \(\delta>0\), the smooth candidate \(E-\delta C\)
has a strictly positive derivative at arbitrarily close states
where the vector field is differentiable.
\end{proposition}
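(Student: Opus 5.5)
The plan is to verify the equilibrium directly from \eqref{kkt}, and then build two explicit families of initial states converging to \(Z_*\): one for the claim about \(E\), one for the claim about \(E-\delta C\). With \(B=I_2\), \(\lambda=\gamma=1\) and \(x_*=(1,0)\), we get \(y_*=Bx_*-b=(-1,0)\) and \(\xi_*=-y_*=(1,0)\). This lies in \(\partial\norm{\cdot}_1(x_*)\), since the first coordinate is positive with sign \(1\) and \(0\in[-1,1]\) for the second. So \(Z_*\in\Omega\).

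For the first claim, take \(x^0=(1+\eps,0)\) and \(y^0=y_*\) with \(\eps>0\) small. Then \(u=(\eps,0)\) and \(v=0\).
\begin{itemize}
\item The second coordinate satisfies \(\dot x_2\in-\partial|x_2|-y_2\) with \(y_2(0)=0\). The minimal-norm selection gives \(\dot x_2(0+)=0\), hence \(\xi_2=0\) and \(q_2=0\).
\item Since \(x_1>0\) near \(t=0\), we have \(\xi_1=1\), so \(q_1=0\).
\item Therefore \(\ip uq=0\) and \(\norm v=0\) at \(t=0+\). By \eqref{basicE}, \(\dot E(0+)=0\).
\end{itemize}
The state is not stationary, because \(\dot y_1(0)=x_1-2-y_1=\eps\ne0\).

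The only delicate point is justifying the right derivative. I will use the fact that the strong solution from \cite[Theorem~3.1]{Brezis} has a right derivative everywhere, equal to the minimal-norm element of \(-A(Z)\). This pins down \(q(0+)\), and \(E\) is \(C^1\) along the Lipschitz trajectory in the right-derivative sense.

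For the second claim, compute the derivative of \(E-\delta C\) as in Step~1 of Theorem~\ref{thm:composite}, but without the \(h\) term. The \(\ip{Bq}v\) contribution then survives with a plus sign:
\[
\frac{d}{dt}(E-\delta C)=-\ip uq-\norm v^2+\delta\bigl(\ip qv+\norm v^2-\norm u^2+\ip uv\bigr).
\]
Choose \(x=(1,\eta)\) and \(y=(-1,\beta)\) with \(\beta>0\) small and \(\eta=\beta^3\).
\begin{itemize}
\item Both primal coordinates are nonzero, so the vector field is smooth nearby and \(\xi=(1,1)\), \(q=(0,1)\).
\item We also have \(u=(0,\eta)\) and \(v=(0,\beta)\).
\item The derivative equals \(-\eta-\beta^2+\delta(\beta+\beta^2-\eta^2+\eta\beta)=\delta\beta+O(\beta^2)\) for fixed \(\delta\). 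This is strictly positive for \(\beta\) small.
\end{itemize}
Letting \(\beta\downarrow0\) gives such states arbitrarily close to \(Z_*\).

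I expect the main obstacle to be the rigorous handling of \(\dot E(0+)\) at the coordinate zero. This is where I must invoke the minimal-section property rather than an arbitrary subgradient selection. The second claim is a plain computation once the signs of \(q\) and \(v\) are aligned, so that the uncancelled \(\delta\ip qv\) term dominates.
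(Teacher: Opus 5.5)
Your proposal is correct and follows the paper's proof. You use the same perturbation \(u=(\eps,0)\), \(v=0\), which gives \(q(0+)=0\) and hence \(\dot E(0+)=0\). You then use a state with both primal coordinates positive, \(q=(0,1)\), and \(v\) aligned with \(q\), so that the uncancelled \(\delta\ip qv\) term dominates \(\frac{d}{dt}(E-\delta C)\). The differences are cosmetic: you obtain \(q(0+)=0\) from the minimal-section right derivative of the strong solution rather than from uniqueness of the stationary pair \((x_2,y_2)=(0,0)\), and you take \(u=(0,\beta^3)\), \(v=(0,\beta)\) instead of \(u=(\eps,\eps)\), \(v=(0,\sqrt\eps)\).
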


\begin{proof}
\emph{Step 1: a vanishing energy derivative.}
For initial offsets \(u=(\eps,0)\), \(v=0\), \(\eps>0\),
the second coordinate pair \((x_2,y_2)\) remains \((0,0)\)
by uniqueness.
Since \(x_1(0)>0\), we have
\(\xi(0+)=(1,0)\) and \(q(0+)=0\).
Thus \(\dot E(0+)=0\), whereas \(\dot v(0+)=u\ne0\).

\emph{Step 2: failure of the smooth cross-term correction.}
For \(u=(\eps,\eps)\), \(v=(0,\sqrt\eps)\), \(\eps>0\),
we have \(\xi=(1,1)\), \(q=(0,1)\), and
\[
\dot E=-2\eps,\qquad
\dot C=-\sqrt\eps-\eps+2\eps^2-\eps^{3/2}.
\]
Hence
\[
\frac d{dt}(E-\delta C)=
-2\eps+\delta(\sqrt\eps+\eps+\eps^{3/2}-2\eps^2)>0
\]
for sufficiently small \(\eps\).
This comparison concerns only these particular Lyapunov candidates.
\end{proof}

The composite estimate controls the quantities needed to find
an equilibrium cluster point.
Fej\'er monotonicity then gives convergence of the whole trajectory.

\begin{theorem}[Rank-free convergence and finite hitting]
\label{thm:convergence}
Every solution of \eqref{flow} converges in \(\R^{p+m}\)
to an equilibrium \((x_\infty,y_*)\).
No injectivity of \(B\) or uniqueness of the primal minimizer
is required.
If \(I_*=\{i:|B_i^\top y_*|<\lambda\}\), then for each \(i\in I_*\)
there exists \(T_i<\infty\) such that
\(x_i(t)=0\) for all \(t\ge T_i\).
\end{theorem}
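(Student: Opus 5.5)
The plan is to combine the composite dissipation of Theorem~\ref{thm:composite} with Fej\'er monotonicity from Lemma~\ref{lem:basic}. This yields one equilibrium cluster point and then convergence of the whole trajectory.

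Fix any equilibrium \(Z_*=(x_*,y_*)\) and a \(\delta>0\) satisfying \eqref{deltacond}; small \(\delta\) always works. By \eqref{compareW}, \(W_\delta\ge0\). Since \(W_\delta(t)\) is locally absolutely continuous, integrating \eqref{dissW} over \([0,\infty)\) gives
\[
\int_0^\infty\Bigl(h+\tfrac\delta2\norm{Bu}^2+\tfrac\gamma2\norm v^2+\delta\norm q^2\Bigr)\,dt\le W_\delta(0)<\infty .
\]
The functions \(h(x(t))\), \(\norm{Bu(t)}^2\) and \(\norm{v(t)}^2\) are uniformly continuous. This holds because the solution is globally Lipschitz, bounded, and \(h\) is Lipschitz. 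Barbalat's lemma then gives \(h(x(t))\to0\), \(Bx(t)\to Bx_*\) and \(y(t)\to y_*\).

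Next, take any sequence \(t_n\to\infty\). By boundedness, there is a subsequence with \(x(t_n)\to\bar x\). Continuity gives \(B\bar x=Bx_*\) and \(h(\bar x)=0\), that is, \(g(\bar x)=\ip{\xi_*}{\bar x}\). Since \(\norm{\xi_*}_\infty\le\lambda\) and \(\ip{\xi_*}{\bar x}=g(\bar x)\), we get \(\xi_*\in\partial g(\bar x)\). Indeed, \(g(w)\ge\ip{\xi_*}{w}\) for all \(w\), so \(g(w)-g(\bar x)\ge\ip{\xi_*}{w-\bar x}\). Together with \(y_*=(B\bar x-b)/\gamma\) and \(\xi_*=-B^\top y_*\), this is exactly \eqref{kkt} at \(\bar x\). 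Hence \(\bar x\in\operatorname{argmin}P\) and \((\bar x,y_*)\in\Omega\).

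Fej\'er monotonicity relative to \(\Omega\) (Lemma~\ref{lem:basic}) makes \(\norm{Z(t)-(\bar x,y_*)}\) nonincreasing. Since it tends to zero along \(t_n\), the whole trajectory converges to \((x_\infty,y_*):=(\bar x,y_*)\). No rank condition entered; only \(h\), \(Bu\), \(v\) were controlled. This cluster-point identification is the step I regard as the crux, and it is handled by the characterization \(h(\bar x)=0\Leftrightarrow\xi_*\in\partial g(\bar x)\).

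For finite hitting, fix \(i\in I_*\) and set \(\eta=\lambda-|B_i^\top y_*|>0\). Since \(y(t)\to y_*\), there is \(T_0\) with \(|B_i^\top y(t)|\le\lambda-\eta/2\) for \(t\ge T_0\). Also \(x_{\infty,i}=0\), because \(\xi_{*,i}\in\partial(\lambda|\cdot|)(x_{\infty,i})\) with \(|\xi_{*,i}|<\lambda\). The coordinate inclusion is \(\dot x_i\in-\lambda\partial|x_i|-B_i^\top y\). Hence, on \(\{x_i\ne0\}\) and a.e. (using the chain rule as in Lemma~\ref{lem:basic}, Step 3),
\[
\frac{d}{dt}|x_i|=-\lambda-\operatorname{sign}(x_i)B_i^\top y\le-\eta/2 .
\]
On the zero set the derivative of \(|x_i|\) vanishes a.e. Thus \(|x_i(t)|\le(|x_i(T_0)|-\eta(t-T_0)/2)_+\) for \(t\ge T_0\). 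So \(x_i(t)=0\) for all \(t\ge T_i:=T_0+2|x_i(T_0)|/\eta\), and it stays zero because \(|x_i|\) is nonincreasing once zero.
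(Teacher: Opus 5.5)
Your proposal is correct and follows the paper's proof essentially step for step. You integrate the composite dissipation, use uniform continuity (Barbalat) to get \(h\to0\), \(Bu\to0\), \(v\to0\), identify an equilibrium cluster point through \(h(\bar x)=0\), conclude by Fej\'er monotonicity, and obtain finite hitting from the margin \(\eta/2\) once \(y\) is near \(y_*\). The only differences are cosmetic: you verify \(\xi_*\in\partial g(\bar x)\) via the dual-norm bound \(g(w)\ge\ip{\xi_*}{w}\) instead of transporting the subgradient inequality from \(x_*\), and you get permanence from monotonicity of \(|x_i|\) instead of the paper's left-endpoint integration argument.
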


\begin{proof}
Fix an equilibrium and choose \(\delta>0\) satisfying
\eqref{deltacond}.

\emph{Step 1: integrability.}
Equation \eqref{basicE} bounds \((u,v)\).
Since \(W_\delta\ge0\), integrating \eqref{dissW} and using
monotone convergence as \(T\to\infty\) gives
\[
\int_0^\infty\!
\left(h(x(t))+\tfrac{\delta}{2}\norm{Bu(t)}^2+
\tfrac{\gamma}{2}\norm{v(t)}^2+\delta\norm{q(t)}^2\right)dt
\le W_\delta(0).
\]

\emph{Step 2: pointwise limits.}
The derivatives \(\dot x=-q-B^\top v\) and
\(\dot v=Bu-\gamma v\) are bounded almost everywhere.
Since \(h\) is Lipschitz and squared norms are Lipschitz
on bounded sets, the functions
\(h(x(t))\), \(\norm{Bu(t)}^2\), and \(\norm{v(t)}^2\)
are uniformly continuous.
They are also nonnegative and integrable by Step~1.
A uniformly continuous nonnegative integrable function
tends to zero: otherwise, disjoint intervals would have
a common positive lower bound for their integrals.
Therefore \(h(x(t))\to0\), \(Bu(t)\to0\), and \(v(t)\to0\).

\emph{Step 3: an equilibrium cluster point.}
Boundedness in finite dimensions gives \(t_n\to\infty\)
such that \(x(t_n)\to\bar x\).
Continuity gives \(h(\bar x)=0\), \(B\bar x=Bx_*\), and
\(y(t_n)\to y_*\).
Since \(h(\bar x)=0\), the subgradient inequality
\(g(z)\ge g(x_*)+\ip{\xi_*}{z-x_*}\) becomes
\(g(z)\ge g(\bar x)+\ip{\xi_*}{z-\bar x}\)
for every \(z\in\R^p\).
Hence \(\xi_*\in\partial g(\bar x)\).
Together with \(B\bar x=b+\gamma y_*\), this proves that
\((\bar x,y_*)\) is an equilibrium.

\emph{Step 4: convergence.}
Apply \eqref{basicE} with \((\bar x,y_*)\) as reference.
The distance is nonincreasing and tends to zero along \(t_n\),
so it tends to zero along the whole trajectory.

\emph{Step 5: hitting and permanence.}
For \(i\in I_*\), set \(\eta_i=\lambda-|B_i^\top y_*|>0\).
Since \(y(t)\to y_*\), there is \(\tau_i<\infty\) such that
\(|B_i^\top y(t)|\le\lambda-\eta_i/2\) for all \(t\ge\tau_i\).
Whenever \(x_i(t)\ne0\),
\[
\frac d{dt}|x_i(t)|
=-\lambda-\operatorname{sign}(x_i(t))B_i^\top y(t)
\le-\eta_i/2\quad\text{a.e.}
\]
A connected interval on which \(x_i\ne0\) cannot start
from a zero at or after \(\tau_i\): integration from its
left endpoint would make \(|x_i|\) negative.
Integrating on the interval containing \(\tau_i\), when
\(x_i(\tau_i)\ne0\), therefore gives
\[
|x_i(t)|\le
\left(|x_i(\tau_i)|-\frac{\eta_i}{2}(t-\tau_i)\right)_+
\qquad(t\ge\tau_i).
\]
Thus \(T_i=\tau_i+2|x_i(\tau_i)|/\eta_i\) is valid,
and the coordinate cannot reactivate.
\end{proof}

\begin{remark}\label{rem:strict-margin}
The theorem gives no observable deadline, since \(y_*\)
and the entry times \(\tau_i\) are unknown.
Without strictness, finite hitting may fail.
For \(m=p=1\), \(B=\lambda=1\), and \(\gamma=b=5/2\),
the solution
\(x(t)=a e^{-t/2}\), \(y(t)=-1+(a/2)e^{-t/2}\), \(a>0\),
approaches the saturated equilibrium \((0,-1)\)
without reaching \(x(t)=0\).
\end{remark}

An exponential rate follows when strict margins control \(x_I\)
and injectivity of \(B_{I^c}\) controls the remaining error.

\begin{theorem}[Semiglobal Lyapunov rate]\label{thm:semiglobal}
Fix an equilibrium and a nonempty set
\(I\subseteq\{1,\ldots,p\}\) such that
\(\eta=\lambda-\norm{(B^\top y_*)_I}_\infty>0\).
Put \(S=I^c\).
Then \(x_{*,I}=0\), but \(S\) need not equal \(\supp(x_*)\).
If \(S\ne\varnothing\), assume that \(B_S\) has full column rank
with smallest singular value \(\sigma>0\).
Set \(R=\sqrt{2E(0)}\) and choose \(\delta\) satisfying
\eqref{deltacond}.
For \(S\ne\varnothing\), define
\[
K_R=\frac R\eta\left(1+\frac{2L^2}{\sigma^2}\right),
\quad
C_R=\max\left\{
\frac{1+\delta L}{\sigma^2},\frac{1+\delta L}{2},
\frac{(1+\delta L)K_R}{2}+\delta\right\}.
\]
For \(S=\varnothing\), set \(K_R=R/\eta\) and omit
the first term in \(C_R\).
Then, for \(R>0\),
\[
W_\delta(t)\le W_\delta(0)e^{-\rho_R t},
\qquad
\rho_R=\frac{\min\{1,\delta/2,\gamma/2\}}{C_R}>0.
\]
Consequently,
\(\norm{Z(t)-Z_*}\le
\sqrt{2W_\delta(0)/(1-\delta L)}\,e^{-\rho_Rt/2}\).
The equilibrium is unique under these assumptions.
For \(R=0\), the flow is stationary.
\end{theorem}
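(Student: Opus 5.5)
The plan is to prove a Łojasiewicz-type comparison
\[
W_\delta\le C_R\,\bigl(h+\norm{Bu}^2+\norm v^2\bigr)
\]
along the trajectory. We then combine it with the dissipation inequality \eqref{dissW} and close with Grönwall's lemma.

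First, two preliminary facts. Since \(\xi_{*,i}=-B_i^\top y_*\in\partial(\lambda|\cdot|)(x_{*,i})\) and \(|\xi_{*,i}|\le\lambda-\eta<\lambda\) for \(i\in I\), we get \(x_{*,I}=0\). For \(S\), nothing forces \(x_{*,i}\ne0\), which is why \(S\) need not be the support. Next, by the formula \(h(x)=g(x)-\ip{\xi_*}{x}=\sum_i(\lambda|x_i|-\xi_{*,i}x_i)\), every summand is nonnegative and the summands in \(I\) are at least \(\eta|x_i|\). Hence
\[
h(x)\ge\eta\norm{x_I}_1=\eta\norm{u_I}_1\ge\eta\norm{u_I}.
\]
By Fejér monotonicity (Lemma~\ref{lem:basic}), \(\norm{u(t)}\le\sqrt{2E(t)}\le R\) for all \(t\). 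This gives the key absorption estimate
\[
\norm{u_I}^2\le\norm{u_I}\,h/\eta\le Rh/\eta .
\]
This squared-from-linear step is the semiglobal ingredient: it is why the rate depends on \(R\).

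Second, we control \(u_S\) through restricted injectivity. From \(Bu=B_Su_S+B_Iu_I\) we get \(\sigma\norm{u_S}\le\norm{Bu}+L\norm{u_I}\). Squaring gives
\[
\norm{u_S}^2\le 2\norm{Bu}^2/\sigma^2+2L^2\norm{u_I}^2/\sigma^2 .
\]
Adding the bound on \(u_I\) yields \(\norm u^2\le 2\norm{Bu}^2/\sigma^2+K_Rh\). When \(S=\varnothing\), this reduces to \(\norm u^2\le (R/\eta)h\). Inserting this into the upper bound of \eqref{compareW}, \(W_\delta\le\tfrac{1+\delta L}{2}(\norm u^2+\norm v^2)+\delta h\), produces exactly the three coefficients defining \(C_R\). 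This proves the comparison inequality above.

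Third, the right side of \eqref{dissW} is at most \(-\min\{1,\delta/2,\gamma/2\}\,(h+\norm{Bu}^2+\norm v^2)\), after dropping \(-\delta\norm q^2\). Hence \(\dot W_\delta\le-\rho_RW_\delta\) almost everywhere. The function \(W_\delta(t)\) is locally absolutely continuous: \(E\) and \(C\) are compositions of smooth functions with the Lipschitz trajectory, and \(h(x(t))\) is locally absolutely continuous by Lemma~\ref{lem:basic}. Therefore Grönwall's inequality for absolutely continuous functions applies to \(e^{\rho_Rt}W_\delta\) and gives the exponential bound. The distance estimate then follows from the lower bound in \eqref{compareW}, via \((1-\delta L)\tfrac12\norm{Z-Z_*}^2\le W_\delta\).

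Finally, uniqueness. Let \(Z'\ne Z_*\) be another equilibrium and start the flow at \(Z'\). Then \(R=\norm{Z'-Z_*}>0\), the trajectory is constant, and \(W_\delta(Z')>0\) by strong convexity. This contradicts the decay \(W_\delta(t)\to0\). For \(R=0\), the initial state is \(Z_*\) itself, which is stationary.

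I expect the main obstacle to be the bound on \(\norm{u_I}^2\). The Bregman remainder is only linear in \(u_I\), so converting it into a quadratic bound genuinely requires the a priori radius \(R\) from Fejér monotonicity. One must also check that the inequality \(h\ge\eta\norm{x_I}_1\) holds without any sign information on \(x_S\).
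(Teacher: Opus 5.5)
Your proposal is correct and follows the paper's proof essentially step for step: the bound \(h\ge\eta\norm{x_I}_1\) combined with the Fej\'er radius, restricted injectivity for \(u_S\), insertion into the upper bound of \eqref{compareW}, and the a.e.\ Gr\"onwall argument with \eqref{dissW}. The only difference is uniqueness. The paper gets it directly from the KKT relations: all minimizers vanish on \(I\), share \(Bx_*=b+\gamma y_*\), and injectivity of \(B_S\) fixes \(x_{*,S}\). You instead deduce it by applying the decay estimate to the stationary trajectory at a second equilibrium; both routes are valid and non-circular.
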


\begin{proof}
\emph{Step 1: strict margins and uniqueness.}
By \eqref{kkt}, every primal minimizer vanishes on \(I\)
and has the same image under \(B\).
When \(S\ne\varnothing\), injectivity of \(B_S\) forces
the remaining coordinates to agree.
When \(S=\varnothing\), every primal minimizer is zero.
Since \(y_*\) is unique, the equilibrium is unique.

\emph{Step 2: coercivity on the bounded trajectory.}
Fej\'er monotonicity gives \(\norm u\le R\).
For \(i\in I\), the \(i\)-th Bregman term satisfies
\(\lambda|x_i|-\xi_{*,i}x_i\ge\eta|x_i|\).
The other coordinate remainders are nonnegative, so
\(h(x)\ge\eta\norm{x_I}_1\).
Hence
\(\norm{x_I}^2\le\norm{x_I}_\infty\norm{x_I}_1
\le(R/\eta)h(x)\).
When \(S\ne\varnothing\), we have \(B_Su_S=Bu-B_Ix_I\).
Using \(\norm{B_I}\le L\) and the squared triangle inequality gives
\[
\begin{aligned}
\norm{u_S}^2
&\le\frac2{\sigma^2}
\left(\norm{Bu}^2+L^2\norm{x_I}^2\right),\\
\norm u^2
&\le\frac2{\sigma^2}\norm{Bu}^2+K_R h(x).
\end{aligned}
\]
For \(S=\varnothing\), the same estimate holds without
the first term.

\emph{Step 3: exponential decay.}
Equation \eqref{compareW} gives
\(W_\delta\le C_R(h+\norm{Bu}^2+\norm v^2)\).
Combining this with \eqref{dissW} yields
\(\dot W_\delta\le-\rho_R W_\delta\) almost everywhere.
The a.e. Gronwall inequality gives the Lyapunov estimate.
The lower bound in \eqref{compareW} gives the state estimate.
\end{proof}

\begin{remark}[Strength of restricted injectivity]
\label{rem:restricted-injectivity}
Restricted injectivity applies to all columns in \(S=I^c\),
including inactive coordinates outside \(I\).
It requires \(|S|\le m\), hence \(|I|\ge p-m\).
For a \(40\times60\) design, at least twenty indices must
therefore belong to \(I\), and the remaining columns must
be independent.
A strict margin for one coordinate alone does not imply
this rank condition.

Uniform exponential bounds can fail when \(B\) is rank deficient.
Take \(B=0\), \(b=0\), \(y^0=0\), and \(x^0=a e_1\),
\(a>0\), where \(e_1\) is the first coordinate vector.
Then \(x(t)=(a-\lambda t)_+e_1\) and \(y(t)=0\).
At \(t=a/(2\lambda)\), the ratio
\(W_\delta(t)/W_\delta(0)\) tends to \(1/4\) as \(a\to\infty\).
Thus a positive exponential rate and a multiplicative constant
cannot both be independent of the initial state.
\end{remark}
\section{Gap-based identification bounds}\label{sec:gap-bounds}
The primal--dual gap bounds the distance to the dual optimizer
and can screen coordinates at every primal optimum.
Identification along the dynamics also requires control of
future dual states. We obtain this control from distance bounds
and Fej\'er monotonicity, then derive continuous and discrete
deadlines and establish their small-step limit.

\begin{lemma}[Gap and Bregman enclosure]\label{lem:gap}
Let \(x\in\R^p\), \(z\in\mathcal D\), and
\[
G=P(x)-D(z)\ge0,\qquad r=\sqrt{2G/\gamma}.
\]
For any optimal pair \((x_*,y_*)\),
\begin{equation}\label{gapenclose}
\norm{z-y_*}\le r,\qquad
P(x)-P(x_*)=h(x)+\frac{\norm{B(x-x_*)}^2}{2\gamma}
\le G.
\end{equation}
\end{lemma}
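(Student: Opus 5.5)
The proof has two parts: an exact identity for the primal suboptimality, and the dual strong concavity of \(D\) on \(\mathcal D\). Both are then compared with the gap \(G\) through the equality \(\min P=\max_{\mathcal D}D=D(y_*)\) established after \eqref{kkt}.

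\emph{Step 1: the primal identity.} Set \(w=x-x_*\). Expand
\[
\norm{Bx-b}^2=\norm{Bx_*-b}^2+2\ip{Bx_*-b}{Bw}+\norm{Bw}^2,
\]
and substitute \(Bx_*-b=\gamma y_*\) from \eqref{kkt}. Dividing by \(2\gamma\), the cross term becomes \(\ip{y_*}{Bw}=\ip{B^\top y_*}{w}=-\ip{\xi_*}{w}\). Combined with \(g(x)-g(x_*)\), this gives \(g(x)-g(x_*)-\ip{\xi_*}{x-x_*}=h(x)\) by Definition~\ref{def:breg}. Hence
\[
P(x)-P(x_*)=h(x)+\frac{\norm{Bw}^2}{2\gamma}.
\]
The upper bound by \(G\) then follows from \(P(x_*)=\max_{\mathcal D}D\ge D(z)\), since \(z\in\mathcal D\).

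\emph{Step 2: the dual enclosure.} \(D\) is a \(\gamma\)-strongly concave quadratic and \(y_*\) maximizes it over the convex set \(\mathcal D\). The first-order optimality condition therefore gives \(\ip{\nabla D(y_*)}{z-y_*}\le0\) for \(z\in\mathcal D\). Since \(D\) is quadratic, the exact expansion is
\[
D(z)=D(y_*)+\ip{\nabla D(y_*)}{z-y_*}-\frac\gamma2\norm{z-y_*}^2,
\]
so \(D(y_*)-D(z)\ge\frac\gamma2\norm{z-y_*}^2\). Finally, \(D(y_*)=P(x_*)\le P(x)\) gives \(\frac\gamma2\norm{z-y_*}^2\le G\), that is, \(\norm{z-y_*}\le r\). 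Nonnegativity of \(G\) is weak duality.

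The only point needing care is the variational inequality at a constrained maximizer. It follows from concavity and convexity of \(\mathcal D\): the map \(t\mapsto D(y_*+t(z-y_*))\) on \([0,1]\) is maximized at \(t=0\), so its right derivative at \(0\) is nonpositive. Everything else is algebra. The identity holds for any optimal pair, because \(h\) does not depend on the chosen minimizer and \(y_*\) is unique.
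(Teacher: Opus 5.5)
Your proof is correct and follows essentially the same route as the paper's. The identity comes from expanding the squared loss at \(Bx_*-b=\gamma y_*\) with \(-B^\top y_*=\xi_*\). The enclosure comes from the \(\gamma\)-strong concavity of \(D\) at its constrained maximizer together with \(D(y_*)=P(x_*)\le P(x)\). You simply take the two steps in reverse order and make explicit the variational inequality \(\ip{\nabla D(y_*)}{z-y_*}\le0\), which the paper leaves implicit.
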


\begin{proof}
\emph{Step 1: dual enclosure.}
Strong concavity of \(D\) and optimality of \(y_*\) on
\(\mathcal D\) give
\[
D(z)\le D(y_*)-\frac\gamma2\norm{z-y_*}^2.
\]
Since \(D(y_*)=P(x_*)\le P(x)\), the first bound follows.

\emph{Step 2: gap decomposition.}
Expanding the squared loss at \(Bx_*-b=\gamma y_*\)
and using \(-B^\top y_*=\xi_*\) gives the identity.
Weak duality gives the final bound.
\end{proof}

The dual enclosure is the classical Gap Safe sphere in this
normalization \cite[Theorem~6]{Ndiaye}.
The next result adds a uniform margin for the future
unmodified trajectory.

\begin{theorem}[Observed-state forward-zero certificate]
\label{thm:gapcert}
At an observed exact state \((x,y)\) at time \(t_0\),
choose \(z\in\mathcal D\) and a nonempty set
\(I\subseteq\{1,\ldots,p\}\).
With \(G,r\) from Lemma~\ref{lem:gap}, put
\[
\eta_i=\lambda-|B_i^\top z|-\norm{B_i}r,\qquad
\eta_I=\min_{i\in I}\eta_i,\quad S=I^c,\quad
M_I=\max_{i\in I}\norm{B_i}.
\]
Assume \(\eta_I>0\).
If \(S\ne\varnothing\), assume that \(B_S\) is injective
with least singular value \(\sigma>0\), and set
\[
a=\left[\norm{x_I}^2+
\frac{\bigl(\sqrt{2\gamma G}+\norm{B_Ix_I}\bigr)^2}
{\sigma^2}\right]^{1/2}.
\]
If \(S=\varnothing\), set \(a=\norm x\).
In both cases, define
\[
d=\sqrt{a^2+(\norm{y-z}+r)^2},\qquad
\eps_i=\lambda-|B_i^\top z|-\norm{B_i}(r+d),\quad
\eps_I=\min_{i\in I}\eps_i.
\]
If \(\eps_I>0\), every exact solution starting from \((x,y)\)
satisfies
\[
x_i(t)=0\quad
\left(i\in I,\quad
t\ge t_0+\max_{j\in I}\frac{|x_j|}{\eps_j}\right).
\]
\end{theorem}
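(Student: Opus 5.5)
The plan is to bound every future dual state uniformly, \(\norm{y(t)-z}\le r+d\) for \(t\ge t_0\), and then reuse the absorption argument from Step~5 of Theorem~\ref{thm:convergence} with an explicit margin. By time invariance we may set \(t_0=0\). Fix any optimal pair \((x_*,y_*)\). Lemma~\ref{lem:gap} gives \(\norm{z-y_*}\le r\). If we also know \(\norm{y(t)-y_*}\le d\) for all \(t\ge0\), then \(\norm{y(t)-z}\le r+d\). Cauchy--Schwarz then gives
\[
|B_i^\top y(t)|\le|B_i^\top z|+\norm{B_i}(r+d)=\lambda-\eps_i\qquad(i\in I,\ t\ge0).
\]

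\emph{Step 1: controlling \(d\).} By Fej\'er monotonicity (Lemma~\ref{lem:basic}), \(\norm{Z(t)-Z_*}\le\norm{Z(0)-Z_*}\), and the dual part gives \(\norm{y(t)-y_*}\le\norm{Z(0)-Z_*}\). It therefore suffices to show \(\norm{x-x_*}\le a\) and \(\norm{y-y_*}\le\norm{y-z}+r\). The second bound is the triangle inequality together with Lemma~\ref{lem:gap}.

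For the first bound, note that \(\eta_I>0\) gives \(|B_i^\top y_*|\le|B_i^\top z|+\norm{B_i}r<\lambda\) for \(i\in I\). By \eqref{kkt}, \(x_{*,I}=0\), so \(\norm{(x-x_*)_I}=\norm{x_I}\). If \(S=\varnothing\), this already gives \(\norm{x-x_*}=\norm x=a\).

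If \(S\ne\varnothing\), write \(B(x-x_*)=B_Ix_I+B_S(x_S-x_{*,S})\). Lemma~\ref{lem:gap} gives \(\norm{B(x-x_*)}^2\le2\gamma G\). Hence
\[
\sigma\norm{x_S-x_{*,S}}\le\norm{B_S(x_S-x_{*,S})}\le\sqrt{2\gamma G}+\norm{B_Ix_I}.
\]
Summing the squares of the \(I\) and \(S\) parts yields \(\norm{x-x_*}\le a\). Thus \(\norm{Z(0)-Z_*}\le d\), and the uniform enclosure follows.

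\emph{Step 2: deadline.} For \(i\in I\) and a.e.\ \(t\) with \(x_i(t)\ne0\), the flow gives
\[
\frac{d}{dt}|x_i(t)|=-\lambda-\operatorname{sign}(x_i(t))B_i^\top y(t)\le-\eps_i.
\]
On the zero set of \(x_i\), the derivative of \(|x_i|\) vanishes a.e., which is the same fact used in Lemma~\ref{lem:basic}. So \(|x_i|\) is nonincreasing, and it decreases at rate at least \(\eps_i\) while it is positive. Integrating as in Theorem~\ref{thm:convergence} gives \(|x_i(t)|\le(|x_i|-\eps_it)_+\). In particular, a zero reached at any time cannot be left. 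Hence \(x_i(t)=0\) for \(t\ge|x_i|/\eps_i\), and taking the maximum over \(j\in I\) gives the stated time.

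\emph{Main obstacle.} The delicate point is Step~1. The argument must avoid any knowledge of \(x_*\), and it must work even if the primal minimizer is not unique. This is handled by the facts that \(x_{*,I}=0\) for every minimizer and that \(B_S\) is injective, so that Fej\'er monotonicity applied to an arbitrary equilibrium yields a bound depending only on observed quantities.
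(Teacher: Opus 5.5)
Your proof is correct and follows the paper's argument essentially step for step. You screen the optimizers with the gap sphere to get \(x_{*,I}=0\), bound \(\norm{x-x_*}\le a\) using the gap decomposition and the injectivity of \(B_S\), use Fej\'er monotonicity to keep every future dual state within \(r+d\) of \(z\), and conclude with the absorption argument of Theorem~\ref{thm:convergence}. The paper also notes \(\norm{x_I}_1\le G/\eta_I\), but it uses this only for a side bound on \(a\), so you were right to omit it.
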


\begin{proof}
\emph{Step 1: screen the optimizers.}
By \eqref{gapenclose},
\[
|B_i^\top y_*|
\le|B_i^\top z|+\norm{B_i}r
\le\lambda-\eta_i.
\]
Thus every primal optimum satisfies \(x_{*,I}=0\).
These coordinates contribute at least
\(\eta_I\norm{x_I}_1\) to \(h(x)\), so
\(\norm{x_I}_1\le G/\eta_I\).
The gap decomposition also gives
\(\norm{B(x-x_*)}\le\sqrt{2\gamma G}\).

\emph{Step 2: enclose an equilibrium.}
For \(S\ne\varnothing\), the identity
\(B_S(x_S-x_{*,S})=B(x-x_*)-B_Ix_I\) gives
\[
\norm{x_S-x_{*,S}}
\le\frac{\sqrt{2\gamma G}+\norm{B_Ix_I}}{\sigma}.
\]
Orthogonality of the coordinate blocks yields
\(\norm{x-x_*}^2=\norm{x_I}^2+
\norm{x_S-x_{*,S}}^2\le a^2\).
For \(S=\varnothing\), every primal optimum is zero,
so \(\norm{x-x_*}=\norm x=a\).
Together with the dual triangle inequality, this gives
\(\norm{(x,y)-(x_*,y_*)}\le d\) for every optimal pair.
Moreover, \(\norm{x_I}\le G/\eta_I\) and
\(\norm{B_Ix_I}\le M_IG/\eta_I\), so
\(a\le(\sqrt{2\gamma G}+M_IG/\eta_I)/\sigma+G/\eta_I\)
when \(S\ne\varnothing\).

\emph{Step 3: a uniform future margin.}
By Fej\'er monotonicity, the distance to the chosen equilibrium
remains at most \(d\).
Since \(\norm{z-y_*}\le r\), every later dual state satisfies
\[
|B_i^\top y(t)|
\le|B_i^\top z|+\norm{B_i}(r+d)
\le\lambda-\eps_i.
\]
The hitting-and-permanence argument in
Theorem~\ref{thm:convergence} gives
\(|x_i(t)|\le(|x_i|-\eps_i(t-t_0))_+\).
Taking the maximum of the individual deadlines proves the result.
\end{proof}

\begin{proposition}[Rank-free alternative]\label{prop:free}
At an observed exact state \((x,y)\) at time \(t_0\),
choose \(z\in\mathcal D\) and a nonempty
\(I\subseteq\{1,\ldots,p\}\).
With \(G,r\) from Lemma~\ref{lem:gap}, set
\[
a_{\rm free}=\norm x+
\frac{\min\{P(x),P(0)\}}{\lambda},\qquad
d_{\rm free}=\sqrt{a_{\rm free}^2+(\norm{y-z}+r)^2}.
\]
Define
\(\eps_{i,\rm free}=\lambda-|B_i^\top z|
-\norm{B_i}(r+d_{\rm free})\).
If \(\eps_{\rm free}:=\min_{i\in I}\eps_{i,\rm free}>0\),
the conclusion of Theorem~\ref{thm:gapcert} holds with
the individual margins \(\eps_{i,\rm free}\).
No rank assumption is required.
\end{proposition}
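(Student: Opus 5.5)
The plan is to mirror the proof of Theorem~\ref{thm:gapcert}, replacing only Step~2, where restricted injectivity was used to bound \(\norm{x-x_*}\), by a rank-free bound. Steps~1 and~3 of that proof do not use the rank assumption: the Gap Safe enclosure \(\norm{z-y_*}\le r\) from Lemma~\ref{lem:gap} and Fej\'er monotonicity from Lemma~\ref{lem:basic} hold for every design \(B\). It therefore suffices to find one equilibrium \((x_*,y_*)\) with \(\norm{(x,y)-(x_*,y_*)}\le d_{\rm free}\).

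For the primal part, take any primal minimizer \(x_*\), which exists by coercivity. The triangle inequality gives \(\norm{x-x_*}\le\norm x+\norm{x_*}\). Since \(\norm{x_*}\le\norm{x_*}_1\), we have \(\lambda\norm{x_*}_1\le P(x_*)\) because the quadratic term is nonnegative. Also \(P(x_*)\le\min\{P(x),P(0)\}\) by optimality. Hence \(\norm{x_*}\le\min\{P(x),P(0)\}/\lambda\), so \(\norm{x-x_*}\le a_{\rm free}\). For the dual part, \(\norm{y-y_*}\le\norm{y-z}+\norm{z-y_*}\le\norm{y-z}+r\). Combining the two blocks orthogonally gives \(\norm{(x,y)-(x_*,y_*)}\le d_{\rm free}\).

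Fej\'er monotonicity keeps \(\norm{y(t)-y_*}\le d_{\rm free}\) for all \(t\ge t_0\); this is a statement for the flow started at time \(t_0\), using autonomy and uniqueness. Then
\[
|B_i^\top y(t)|\le|B_i^\top y_*|+\norm{B_i}d_{\rm free}
\le|B_i^\top z|+\norm{B_i}(r+d_{\rm free})=\lambda-\eps_{i,\rm free}.
\]
Whenever \(x_i(t)\ne0\), this gives \(\frac{d}{dt}|x_i(t)|\le-\eps_{i,\rm free}\) almost everywhere, as in Step~5 of Theorem~\ref{thm:convergence}. The same connected-interval argument yields \(|x_i(t)|\le(|x_i|-\eps_{i,\rm free}(t-t_0))_+\) with no reactivation, and taking the maximum over \(j\in I\) of \(|x_j|/\eps_{j,\rm free}\) gives the deadline.

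No step is a serious obstacle, since the argument mainly checks that nothing in Theorem~\ref{thm:gapcert} except Step~2 used injectivity. The one point needing care is that without uniqueness \(x_*\) is not unique, so the argument must fix a single minimizer and check that every bound holds for it. This is fine because \(y_*\) is unique and the norm bound on \(x_*\) holds for every minimizer. The optimizer screen of Step~1 is not even needed here, since the future-margin estimate uses only \(\norm{z-y_*}\le r\).
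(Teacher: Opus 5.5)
Your proposal is correct and follows the paper's proof. Like the paper, you bound every primal minimizer via \(\lambda\norm{x_*}_1\le P(x_*)\le\min\{P(x),P(0)\}\) to get \(\norm{x-x_*}\le a_{\rm free}\), use this in place of the restricted-injectivity estimate in Step~2 of Theorem~\ref{thm:gapcert}, and keep the Fej\'er enclosure and hitting argument of Step~3 unchanged; your remark that the optimizer screen of Step~1 is not needed here is also accurate.
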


\begin{proof}
Every optimum satisfies
\(\lambda\norm{x_*}_1\le P(x_*)\le\min\{P(x),P(0)\}\),
where \(P(0)=\norm b^2/(2\gamma)\).
Hence \(\norm{x-x_*}\le a_{\rm free}\).
This replaces the distance estimate in Step~2 of
Theorem~\ref{thm:gapcert}; Step~3 is unchanged.
\end{proof}

The exact full resolvent satisfies analogous distance estimates.
We first prove its Fej\'er and composite decrements, then use
them for identification.

For \(s>0\), the exact resolvent
\(Z^{k+1}=(I+sA)^{-1}Z^k\) is specified by
\begin{equation}\label{resolvent}
\frac{x^{k+1}-x^k}{s}+\xi^{k+1}+B^\top y^{k+1}=0,
\quad\xi^{k+1}\in\partial g(x^{k+1}),\qquad
\frac{y^{k+1}-y^k}{s}=Bx^{k+1}-b-\gamma y^{k+1}.
\end{equation}
It is single-valued and nonexpansive for every \(s>0\).
Write
\(\soft_\tau(w)_i=\operatorname{sign}(w_i)(|w_i|-\tau)_+\).
The primal equation is equivalent to
\begin{equation}\label{soft}
x^{k+1}=\soft_{s\lambda}(x^k-sB^\top y^{k+1}).
\end{equation}
These statements concern the full coupled resolvent,
rather than a forward--backward or semi-implicit splitting.

\begin{lemma}[Discrete Fej\'er decrement]\label{lem:disc}
For each equilibrium \((x_*,y_*)\), set
\(E_k=(\norm{x^k-x_*}^2+\norm{y^k-y_*}^2)/2\).
Then
\begin{equation}\label{discdecrement}
E_{k+1}-E_k\le
-s h(x^{k+1})-s\gamma\norm{y^{k+1}-y_*}^2
-\frac12\norm{x^{k+1}-x^k}^2
-\frac12\norm{y^{k+1}-y^k}^2.
\end{equation}
\end{lemma}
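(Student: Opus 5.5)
The plan is to reproduce at the discrete level the energy computation behind \eqref{basicE}. We replace the chain rule by the three-point identity
\[
\tfrac12\norm{a^{k+1}}^2-\tfrac12\norm{a^k}^2=\ip{a^{k+1}}{a^{k+1}-a^k}-\tfrac12\norm{a^{k+1}-a^k}^2,
\]
applied with \(a=x-x_*\) and with \(a=y-y_*\). The two quadratic remainders \(-\tfrac12\norm{x^{k+1}-x^k}^2\) and \(-\tfrac12\norm{y^{k+1}-y^k}^2\) then appear exactly as in \eqref{discdecrement}. It remains to bound the two inner products using the resolvent equations \eqref{resolvent}.

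Put \(u^{k+1}=x^{k+1}-x_*\), \(v^{k+1}=y^{k+1}-y_*\) and \(q^{k+1}=\xi^{k+1}-\xi_*\). Subtracting the KKT relations \eqref{kkt}, written as \(0=\xi_*+B^\top y_*\) and \(0=Bx_*-b-\gamma y_*\), from \eqref{resolvent} gives
\[
x^{k+1}-x^k=-s\bigl(q^{k+1}+B^\top v^{k+1}\bigr),\qquad
y^{k+1}-y^k=s\bigl(Bu^{k+1}-\gamma v^{k+1}\bigr).
\]
Taking inner products with \(u^{k+1}\) and \(v^{k+1}\) and adding, the coupling terms \(-s\ip{B^\top v^{k+1}}{u^{k+1}}\) and \(s\ip{Bu^{k+1}}{v^{k+1}}\) cancel. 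This is the implicit analogue of the skew cancellation in Lemma~\ref{lem:basic}, and it works because both variables are evaluated at the new iterate. What remains is \(-s\ip{u^{k+1}}{q^{k+1}}-s\gamma\norm{v^{k+1}}^2\).

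Finally, the pointwise bound in \eqref{chain}, applied with \(x=x^{k+1}\) and \(\xi=\xi^{k+1}\in\partial g(x^{k+1})\), gives \(\ip{u^{k+1}}{q^{k+1}}\ge h(x^{k+1})\). This yields the term \(-s\,h(x^{k+1})\) and completes \eqref{discdecrement}.

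The computation is routine, and the main point needing care is well-definedness of \(\xi^{k+1}\). The resolvent \((I+sA)^{-1}\) is single-valued by maximal monotonicity of \(A\) (Lemma~\ref{lem:basic}, Step 1). Given the iterate, the first equation of \eqref{resolvent} determines \(\xi^{k+1}=-(x^{k+1}-x^k)/s-B^\top y^{k+1}\), and this vector lies in \(\partial g(x^{k+1})\) by the definition of the resolvent. So no selection issue arises, and the argument uses neither rank assumptions nor uniqueness of \(x_*\).
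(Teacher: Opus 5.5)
Your proposal is correct and follows essentially the same route as the paper: subtract \eqref{kkt} from \eqref{resolvent}, pair the two equations with \(x^{k+1}-x_*\) and \(y^{k+1}-y_*\) so that the skew terms cancel, apply the three-point identity to each block, and bound \(\ip{x^{k+1}-x_*}{\xi^{k+1}-\xi_*}\ge h(x^{k+1})\) using \eqref{chain}. Your remark that \(\xi^{k+1}\) is determined by the resolvent and lies in \(\partial g(x^{k+1})\) is a harmless clarification that the paper leaves implicit.
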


\begin{proof}
Subtract \eqref{kkt} from \eqref{resolvent}.
Pair the primal and dual equations with \(x^{k+1}-x_*\)
and \(y^{k+1}-y_*\), respectively, and add.
The skew products cancel.
Apply
\(2\ip a{a-c}=\norm a^2-\norm c^2+\norm{a-c}^2\)
to both equations and use
\(\ip{x^{k+1}-x_*}{\xi^{k+1}-\xi_*}\ge h(x^{k+1})\).
\end{proof}

Convexity also transfers composite dissipation to implicit Euler.
A related smooth argument appears in
\cite[Theorem~4.1]{ChenWei}.
Here we verify the nonsmooth cancellation.

\begin{proposition}[Composite resolvent dissipation and rate]
\label{prop:discrete-composite}
Fix an equilibrium, \(s>0\), and \(\delta\) satisfying
\eqref{deltacond}.
Put \(u^k=x^k-x_*\), \(v^k=y^k-y_*\),
\(W_k=W_\delta(Z^k)\), and
\(q^{k+1}=\xi^{k+1}-\xi_*\), using the selection in
\eqref{resolvent}.
With \(\Delta Z^k=Z^{k+1}-Z^k\) and
\(\norm w_{M_\delta}^2=\ip{M_\delta w}w\),
\begin{equation}\label{eq:disc-composite}
\begin{aligned}
W_{k+1}-W_k\le{}&
-s\left(h(x^{k+1})+\frac\delta2\norm{Bu^{k+1}}^2
+\frac\gamma2\norm{v^{k+1}}^2
+\delta\norm{q^{k+1}}^2\right)\\
&-\frac12\norm{\Delta Z^k}_{M_\delta}^2.
\end{aligned}
\end{equation}
This inequality requires no rank assumption.
Under the additional hypotheses of Theorem~\ref{thm:semiglobal},
with its constants computed from \(R=\norm{Z^0-Z_*}>0\),
\begin{equation}\label{eq:matched-resolvent-rate}
W_k\le(1+s\rho_R)^{-k}W_0,\qquad
\norm{Z^k-Z_*}\le
\sqrt{\frac{2W_0}{1-\delta L}}\,(1+s\rho_R)^{-k/2}.
\end{equation}
For \(R=0\), the sequence is stationary.
\end{proposition}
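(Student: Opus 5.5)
The plan is to follow the continuous proof of Theorem~\ref{thm:composite}, replacing derivatives by differences and the chain rule by the endpoint Bregman inequality \eqref{eq:bregman-endpoint}. Subtracting \eqref{kkt} from \eqref{resolvent} gives
\[
u^{k+1}-u^k=-s\bigl(q^{k+1}+B^\top v^{k+1}\bigr),\qquad
v^{k+1}-v^k=s\bigl(Bu^{k+1}-\gamma v^{k+1}\bigr).
\]
Write \(W_\delta(Z)=\frac12\norm{Z-Z_*}_{M_\delta}^2+\delta h(x)\) as in \eqref{eq:composite-metric}. For the quadratic part I will use the exact identity
\[
\tfrac12\norm{a}_{M}^2-\tfrac12\norm{c}_{M}^2
=\ip{Ma}{a-c}-\tfrac12\norm{a-c}_M^2,
\]
with \(a=Z^{k+1}-Z_*\), \(c=Z^k-Z_*\) and \(M=M_\delta\). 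This produces the term \(-\frac12\norm{\Delta Z^k}_{M_\delta}^2\) directly. For the remainder I will apply \eqref{eq:bregman-endpoint} with \(x^+=x^{k+1}\), \(\xi^+=\xi^{k+1}\), which gives
\[
\delta\bigl(h(x^{k+1})-h(x^k)\bigr)\le\delta\ip{q^{k+1}}{\Delta x^k}.
\]
Combining the two, \(W_{k+1}-W_k\) is bounded by \(\ip{w}{\Delta Z^k}-\frac12\norm{\Delta Z^k}_{M_\delta}^2\), where \(w=M_\delta(Z^{k+1}-Z_*)+\delta(q^{k+1},0)\in\partial W_\delta(Z^{k+1})\) by \eqref{eq:composite-subgradient}. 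This is the implicit-Euler convexity mechanism.

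The main step, and the one I expect to need the most care, is to check that \(\ip{w}{\Delta Z^k}/s\) equals exactly the continuous expression for \(\dot W_\delta\), evaluated at the \((k+1)\)-st quantities. Expanding gives
\[
\ip{u^{k+1}-\delta B^\top v^{k+1}+\delta q^{k+1}}{-q^{k+1}-B^\top v^{k+1}}
+\ip{v^{k+1}-\delta Bu^{k+1}}{Bu^{k+1}-\gamma v^{k+1}}.
\]
The skew terms \(\mp\ip{u}{B^\top v}\) cancel. The \(\delta\ip{Bq}{v}\) cross terms cancel as well: one comes from \(-\delta B^\top v\) paired with \(-q\), the other from \(\delta q\) paired with \(-B^\top v\). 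What remains is
\[
-\ip{u}{q}-\gamma\norm v^2+\delta\bigl(-\norm q^2+\norm{B^\top v}^2-\norm{Bu}^2+\gamma\ip{Bu}{v}\bigr),
\]
all at index \(k+1\). This is exactly Step~1 of Theorem~\ref{thm:composite}. The same bounds then apply: \(\ip uq\ge h\), \(\norm{B^\top v}\le L\norm v\), Young's inequality, and \eqref{deltacond}. Together they yield \eqref{eq:disc-composite}. No rank assumption enters, since none was used.

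For the rate, Lemma~\ref{lem:disc} gives Fej\'er monotonicity, so \(\norm{u^{k+1}}\le R\). Step~2 of Theorem~\ref{thm:semiglobal} then applies at \(x^{k+1}\) with the same \(K_R\) and \(C_R\). That step gives \(W_{k+1}\le C_R\bigl(h+\norm{Bu}^2+\norm v^2\bigr)^{k+1}\), and hence the dissipation bracket in \eqref{eq:disc-composite} is at least \(\rho_RW_{k+1}\). Dropping the nonpositive \(M_\delta\)-term gives \((1+s\rho_R)W_{k+1}\le W_k\). Iterating this yields the first bound in \eqref{eq:matched-resolvent-rate}, and the lower bound in \eqref{compareW} gives the second.

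The one point to check is that the \(\eta\)-coercivity \(h\ge\eta\norm{x_I}_1\) uses the equilibrium's \(\xi_*\) rather than the trajectory, so it holds at every iterate. For \(R=0\), \(Z^0\) is an equilibrium, hence a fixed point of the resolvent, so the sequence is stationary.
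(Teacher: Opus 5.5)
Your proposal is correct and follows the paper's proof essentially step for step. Both arguments combine the exact $M_\delta$-quadratic identity with the endpoint Bregman inequality \eqref{eq:bregman-endpoint}, cancel the $\delta\ip{Bq}{v}$ terms using the same selection $q^{k+1}$, apply the estimates of Theorem~\ref{thm:composite}, and then use the Fej\'er bound $\norm{u^{k+1}}\le R$ with the coercivity from Theorem~\ref{thm:semiglobal} to obtain $(1+s\rho_R)W_{k+1}\le W_k$.
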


\begin{proof}
\emph{Step 1: endpoint convexity.}
Suppress the superscript \(k+1\) on \(u,v,q\), and set
\(p=M_\delta(u,v)+\delta(q,0)\).
The quadratic identity in \eqref{eq:composite-metric} and
the endpoint inequality
\(h(x^{k+1})-h(x^k)\le\ip q{x^{k+1}-x^k}\) give
\[
W_{k+1}-W_k\le\ip p{\Delta Z^k}
-\tfrac12\norm{\Delta Z^k}_{M_\delta}^2.
\]
This uses only convexity of \(h\).

\emph{Step 2: cancellation.}
The coupled update gives
\(\Delta Z^k=s(-q-B^\top v,\,Bu-\gamma v)\).
Hence
\[
\frac1s\ip p{\Delta Z^k}
=-\ip u q-\gamma\norm v^2
+\delta\{-\norm q^2+\norm{B^\top v}^2
-\norm{Bu}^2+\gamma\ip{Bu}v\}.
\]
The same selection \(q\) occurs in both factors.
The estimates from Theorem~\ref{thm:composite} give
\eqref{eq:disc-composite}.

\emph{Step 3: restricted-rank rate.}
Lemma~\ref{lem:disc} gives \(\norm{u^k}\le R\).
The coercivity estimate in Theorem~\ref{thm:semiglobal} yields
\(W_{k+1}\le C_R(h(x^{k+1})+
\norm{Bu^{k+1}}^2+\norm{v^{k+1}}^2)\).
Thus \(W_{k+1}-W_k\le-s\rho_RW_{k+1}\).
Iteration and \eqref{compareW} prove
\eqref{eq:matched-resolvent-rate}.
The argument also permits any prescribed upper bound
\(R\ge\norm{Z^0-Z_*}\) and any certified positive lower
bound on the strict dual margin.
\end{proof}

\begin{theorem}[Exact-resolvent identification]\label{thm:discconv}
For every \(s>0\), the sequence \eqref{resolvent} converges
in \(\R^{p+m}\) to an equilibrium \((x_\infty,y_*)\).
For each \(i\) with \(|B_i^\top y_*|<\lambda\),
the coordinate \(x_i^k\) is zero for all sufficiently large \(k\).
At an observed iterate \(k_0\), assume the hypotheses of
Theorem~\ref{thm:gapcert} hold, including \(\eps_I>0\).
Then
\[
x_i^k=0\quad\left(i\in I,\quad
k\ge k_0+\max_{j\in I}
\left\lceil\frac{|x_j^{k_0}|}{s\eps_j}\right\rceil\right).
\]
Here \(\lceil a\rceil\) is the smallest integer greater
than or equal to \(a\in\R\).
The same conclusion holds with the positive rank-free
margins \(\eps_{i,\rm free}\).
\end{theorem}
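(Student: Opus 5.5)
The proof follows the continuous argument of Theorem~\ref{thm:convergence}, with Lemma~\ref{lem:disc} and Proposition~\ref{prop:discrete-composite} replacing \eqref{basicE} and \eqref{dissW}. A discrete counterpart of the integrate-the-derivative step for \(|x_i|\) is obtained from the soft-thresholding form \eqref{soft}.

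\emph{Convergence.} Fix an equilibrium and \(\delta\) satisfying \eqref{deltacond}. Lemma~\ref{lem:disc} gives Fej\'er monotonicity and boundedness. Summing \eqref{eq:disc-composite} and using \(W_k\ge0\) shows that
\[
\sum_k s\bigl(h(x^{k+1})+\tfrac\delta2\norm{Bu^{k+1}}^2+\tfrac\gamma2\norm{v^{k+1}}^2\bigr)<\infty .
\]
Hence \(h(x^k)\to0\), \(Bu^k\to0\) and \(y^k\to y_*\); for sequences no uniform-continuity argument is needed. A cluster point \(\bar x\) exists by compactness. Exactly as in Step~3 of Theorem~\ref{thm:convergence}, \(h(\bar x)=0\) gives \(\xi_*\in\partial g(\bar x)\), and \(B\bar x=Bx_*\) then makes \((\bar x,y_*)\) an equilibrium. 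Fej\'er monotonicity relative to \((\bar x,y_*)\) upgrades subsequential convergence to convergence of the whole sequence.

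\emph{Discrete hitting and permanence.} The key one-step claim is the following. Suppose \(|B_i^\top y^{k+1}|\le\lambda-\eps\) for some \(\eps>0\). Then by \eqref{soft},
\[
|x_i^{k+1}|=\bigl(|x_i^k-sB_i^\top y^{k+1}|-s\lambda\bigr)_+\le\bigl(|x_i^k|-s\eps\bigr)_+ .
\]
In particular, \(x_i^k=0\) implies \(x_i^{k+1}=0\), so a zero coordinate cannot reactivate. For the qualitative statement with \(\eta_i=\lambda-|B_i^\top y_*|>0\), convergence \(y^k\to y_*\) gives an index after which the margin is at least \(\eta_i/2\). From that index \(|x_i^k|\) decreases by at least \(s\eta_i/2\) per step until it reaches zero, and then stays zero.

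\emph{Observed-iterate deadline.} At \(k_0\), Steps~1 and~2 of Theorem~\ref{thm:gapcert} apply verbatim, since they concern only the state \((x^{k_0},y^{k_0})\), the vector \(z\) and the optimizers. They give \(\norm{Z^{k_0}-Z_*}\le d\) for every equilibrium and \(\norm{z-y_*}\le r\). By Lemma~\ref{lem:disc}, \(\norm{y^k-y_*}\le d\) for all \(k\ge k_0\). Hence
\[
|B_i^\top y^{k}|\le|B_i^\top z|+\norm{B_i}(r+d)=\lambda-\eps_i\qquad(k\ge k_0),
\]
and in particular for \(k\ge k_0+1\), which is the index used in the one-step claim. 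Iterating gives \(|x_i^{k_0+n}|\le(|x_i^{k_0}|-ns\eps_i)_+\). This vanishes once \(n\ge\lceil |x_i^{k_0}|/(s\eps_i)\rceil\), and taking the maximum over \(j\in I\) gives the stated bound. The rank-free version follows identically, replacing the distance bound by the one in Proposition~\ref{prop:free}.

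The main care point is the indexing. The margin must control \(y^{k+1}\), which the implicit update uses in \(x^{k+1}\), so the uniform bound has to be stated for all later iterates, not just for \(y^{k_0}\). Fej\'er monotonicity supplies this.
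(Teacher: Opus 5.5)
Your proposal is correct and follows the paper's proof. The steps match: Fej\'er boundedness and summability; a cluster point shown to be an equilibrium via \(h(\bar x)=0\) and \(B\bar x=Bx_*\); the soft-thresholding shrinkage \(|x_i^{k+1}|\le(|x_i^k|-s\eps)_+\); and the gap enclosure combined with Fej\'er monotonicity to bound every later \(y^{k+1}\). The one minor difference is how you get \(Bu^k\to0\). You sum the composite decrement \eqref{eq:disc-composite}, whereas the paper sums only \eqref{discdecrement} and recovers \(B(x^{k+1}-x_*)=(y^{k+1}-y^k)/s+\gamma(y^{k+1}-y_*)\to0\) from the dual equation. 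Both routes are valid and neither is circular.
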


\begin{proof}
\emph{Step 1: vanishing observables.}
Summing \eqref{discdecrement} gives
\(h(x^k)\to0\), \(y^k-y_*\to0\), and
\(y^{k+1}-y^k\to0\).
The dual equation then yields
\[
B(x^{k+1}-x_*)=(y^{k+1}-y^k)/s
+\gamma(y^{k+1}-y_*)\longrightarrow0.
\]

\emph{Step 2: convergence.}
Boundedness and finite-dimensional compactness give
a convergent subsequence.
The subgradient argument in Step~3 of
Theorem~\ref{thm:convergence}, together with the dual identity
above, shows that its limit is an equilibrium.
Fej\'er monotonicity relative to that limit gives convergence
of the whole sequence.

\emph{Step 3: hitting and permanence.}
Use the scalar inequality
\[
|\soft_{s\lambda}(a-sc)|
\le(|a|+s|c|-s\lambda)_+.
\]
For a strict index, convergence of \(y^k\) eventually gives
\(|B_i^\top y^{k+1}|\le\lambda-\eta_i/2\), where
\(\eta_i=\lambda-|B_i^\top y_*|>0\).
Each subsequent step either sets the coordinate to zero
or decreases its magnitude by at least \(s\eta_i/2\).
Once zero, it remains zero.

At an observed iterate, the gap enclosure gives
\(|B_i^\top y^{k+1}|\le\lambda-\eps_i\) for all \(k\ge k_0\).
Induction yields
\(|x_i^{k_0+j}|\le(|x_i^{k_0}|-js\eps_i)_+\),
which proves the deadline.
The rank-free enclosure gives the stated variant.
\end{proof}

A known budget for all future dual residuals enlarges the
distance enclosure. Exact primal soft thresholding is still required.

\begin{theorem}[Known summable dual residuals]\label{thm:inexact}
Fix \(s>0\), an initial index \(k_0\), and computed states
satisfying \eqref{soft} exactly at every future step.
Define the dual-equation residual by
\[
\omega^{k+1}=(1+s\gamma)y^{k+1}-y^k
-s(Bx^{k+1}-b).
\]
Assume a known deterministic bound
\(E_{\rm err}\ge\sum_{k=k_0}^\infty\norm{\omega^{k+1}}\).
At state \(k_0\), let \(r,z\) be as in Lemma~\ref{lem:gap}.
Let \(d_{\rm obs}\) be a computable upper bound on the distance
to one fixed optimal pair, such as \(d\) or \(d_{\rm free}\).
For a nonempty \(I\subseteq\{1,\ldots,p\}\), define
\[
\eps_{i,E}=\lambda-|B_i^\top z|-\norm{B_i}
(r+d_{\rm obs}+E_{\rm err}),\qquad
\eps_{I,E}=\min_{i\in I}\eps_{i,E}.
\]
If \(\eps_{I,E}>0\), then \(x_i^k=0\) for \(i\in I\) and all
\[
k\ge k_0+\max_{j\in I}
\left\lceil\frac{|x_j^{k_0}|}{s\eps_{j,E}}\right\rceil.
\]
\end{theorem}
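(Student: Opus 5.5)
The approach is to compare the inexact sequence with an exact resolvent trajectory through the perturbed Fej\'er inequality, and then reuse the screening and hitting argument of Theorem~\ref{thm:discconv}.

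\emph{Perturbed step.} Rewrite the dual residual definition as
\[
\frac{y^{k+1}-y^k}{s}=Bx^{k+1}-b-\gamma y^{k+1}+\frac{\omega^{k+1}}{s}.
\]
Since the primal soft-thresholding \eqref{soft} is exact, the new iterate satisfies
\[
Z^{k+1}+sA(Z^{k+1})\ni Z^k+(0,\omega^{k+1}).
\]
In other words, \(Z^{k+1}=(I+sA)^{-1}\bigl(Z^k+(0,\omega^{k+1})\bigr)\).

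\emph{Inexact Fej\'er bound.} Let \(Z_*=(x_*,y_*)\) be the fixed optimal pair for which \(\norm{Z^{k_0}-Z_*}\le d_{\rm obs}\). Every equilibrium is a fixed point of the resolvent, and the resolvent is nonexpansive. Hence
\[
\norm{Z^{k+1}-Z_*}\le\norm{Z^k-Z_*}+\norm{\omega^{k+1}}.
\]
Summing from \(k_0\) gives, for all \(k\ge k_0\),
\[
\norm{Z^k-Z_*}\le d_{\rm obs}+\sum_{j\ge k_0}\norm{\omega^{j+1}}\le d_{\rm obs}+E_{\rm err}.
\]
In particular \(\norm{y^{k+1}-y_*}\le d_{\rm obs}+E_{\rm err}\) for every \(k\ge k_0\). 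This is the only place the residual budget is used. The budget must bound the whole infinite tail, since the enclosure has to hold for all future indices.

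\emph{Uniform dual margin.} Lemma~\ref{lem:gap} gives \(\norm{z-y_*}\le r\), and this bound depends only on the state at \(k_0\), not on future exactness. The triangle inequality then yields, for \(i\in I\) and all \(k\ge k_0\),
\[
|B_i^\top y^{k+1}|\le|B_i^\top z|+\norm{B_i}(r+d_{\rm obs}+E_{\rm err})=\lambda-\eps_{i,E}.
\]

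\emph{Hitting and permanence.} Apply the exact primal update \eqref{soft} together with the scalar inequality
\[
|\soft_{s\lambda}(a-sc)|\le(|a|+s|c|-s\lambda)_+
\]
from Step~3 of Theorem~\ref{thm:discconv}. This gives
\[
|x_i^{k+1}|\le(|x_i^k|-s\eps_{i,E})_+.
\]
Induction on \(j\) then gives \(|x_i^{k_0+j}|\le(|x_i^{k_0}|-js\eps_{i,E})_+\). So \(x_i^k\) vanishes once \(j\ge\lceil|x_i^{k_0}|/(s\eps_{i,E})\rceil\), and it stays zero afterwards. Taking the maximum over \(j\in I\) gives the stated deadline.

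\emph{Main obstacle.} The delicate point is justifying the perturbed-resolvent representation. I must check that the exact primal equation and the residual definition together give \(Z^{k+1}=(I+sA)^{-1}(Z^k+(0,\omega^{k+1}))\), with the residual entering additively and without an extra factor of \(s\). I must also confirm that \(d_{\rm obs}\) refers to a single optimal pair \(Z_*\), so that the same \(y_*\) appears both in the bound \(\norm{z-y_*}\le r\) and in the Fej\'er bound.
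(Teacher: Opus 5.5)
Your proposal is correct and matches the paper's proof step for step. You use the representation $Z^{k+1}=J_{sA}\bigl(Z^k+(0,\omega^{k+1})\bigr)$ from exact soft thresholding, then nonexpansiveness about the fixed pair $Z_*$ to get the enclosure $d_{\rm obs}+E_{\rm err}$, then the gap bound $\norm{z-y_*}\le r$ and the soft-threshold shrinkage of Theorem~\ref{thm:discconv}. Your check that the residual enters additively without a factor of $s$ is also right: by the definition of $\omega^{k+1}$, the dual component of $Z^{k+1}+sA(Z^{k+1})$ is exactly $y^k+\omega^{k+1}$.
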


\begin{proof}
\emph{Step 1: residual enclosure.}
Exact \eqref{soft} and the residual identity give
\(Z^{k+1}=J_{sA}(Z^k+(0,\omega^{k+1}))\).
Since \(J_{sA}\) is nonexpansive and fixes the chosen
optimal pair \(Z_*\),
\[
\norm{Z^{k+1}-Z_*}
\le\norm{Z^k-Z_*}+\norm{\omega^{k+1}}.
\]
Induction bounds every future distance by
\(d_{\rm obs}+E_{\rm err}\).

\emph{Step 2: coordinate shrinkage.}
Combine this enclosure with \(\norm{z-y_*}\le r\)
and apply \eqref{soft} as in Theorem~\ref{thm:discconv}.
\end{proof}

\begin{remark}\label{rem:dual-residual-scope}
Small residuals observed during a finite run do not establish
the infinite-future budget \(E_{\rm err}\).
The theorem does not cover approximate primal thresholding
or arbitrary perturbations of \eqref{resolvent}.
The exact composite rate in
Proposition~\ref{prop:discrete-composite} is not asserted
for this inexact sequence.
\end{remark}

The optimizer screen \(\eta_I>0\) may hold even when the
observed future margin \(\eps_I\) is nonpositive.
Under restricted injectivity, composite decay then gives
a computable eventual deadline.

\begin{theorem}[Computable eventual deadlines]\label{thm:eventual}
At an observed exact state \((x^0,y^0)\) at time \(t_0\),
choose a dual-feasible \(z\) and a nonempty
\(I\subseteq\{1,\ldots,p\}\).
Compute \(G,r,\eta_I,S\) as in Theorem~\ref{thm:gapcert}
and assume \(\eta_I>0\).
No positivity of the observed future margin \(\eps_I\) is required.

If \(B=0\), direct shrinkage gives \(x_I(t)=0\) for
\(t\ge t_0+\norm{x_I^0}_\infty/\lambda\).
The exact resolvent identifies \(I\) within
\(\lceil\norm{x_I^0}_\infty/(s\lambda)\rceil\) steps.

For \(B\ne0\), assume that \(B_S\) is injective when
\(S\ne\varnothing\), with least singular value \(\sigma>0\).
Let \(d\) be the distance bound in Theorem~\ref{thm:gapcert}.
If \(d=0\), the observed state is an equilibrium
and \(x_I^0=0\).
For \(d>0\), put \(L=\norm B\),
\(M=\max_{i\in I}\norm{B_i}\), and choose \(\delta\)
satisfying \eqref{deltacond}.
When \(S\ne\varnothing\), define
\[
K_d=\frac d{\eta_I}(1+2L^2/\sigma^2),\qquad
C_W=\max\left\{\frac{1+\delta L}{\sigma^2},
\frac{1+\delta L}{2},
\frac{(1+\delta L)K_d}{2}+\delta\right\}.
\]
When \(S=\varnothing\), set \(K_d=d/\eta_I\)
and omit the first term in \(C_W\).
Define
\[
\rho=\frac{\min\{1,\delta/2,\gamma/2\}}{C_W},
\quad U=\frac{1+\delta L}{2}d^2+\delta G,
\quad V=\sqrt{\frac{2U}{1-\delta L}}.
\]
For \(M>0\), put \(\ell=\max\{0,\log(2MV/\eta_I)\}\);
for \(M=0\), put \(\ell=0\).
Set \(T=2\ell/\rho\).
Then the continuous solution satisfies \(x_I(t)=0\) for all
\begin{equation}\label{contdeadline}
t\ge t_0+T+\frac{2\min\{d,V e^{-\rho T/2}\}}{\eta_I}.
\end{equation}
For the exact resolvent with fixed \(s>0\), counted from
the same observed state, define
\begin{equation}\label{discdeadline}
N_s=\left\lceil\frac{2\ell}{\log(1+s\rho)}\right\rceil,
\qquad
K_s=N_s+\left\lceil
\frac{2\min\{d,V(1+s\rho)^{-N_s/2}\}}{s\eta_I}
\right\rceil.
\end{equation}
Then \(x_I^k=0\) permanently for every \(k\ge K_s\).
\end{theorem}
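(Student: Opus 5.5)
The argument has three stages. First, a computable initial bound on the composite Lyapunov function. Second, exponential decay from Theorem~\ref{thm:semiglobal} or Proposition~\ref{prop:discrete-composite} with constants built from \(d\) instead of \(R\). Third, once the distance to the equilibrium is small enough that the future dual states keep a margin \(\eta_I/2\), the shrinkage argument of Step~5 of Theorem~\ref{thm:convergence}, or its discrete analogue in Theorem~\ref{thm:discconv}.

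The degenerate cases come first. If \(B=0\), then \(\dot x_i\in-\lambda\operatorname{sign}(x_i)\) for \(x_i\ne0\), so \(|x_i|\) decreases at rate \(\lambda\). Similarly \(x_i^{k+1}=\soft_{s\lambda}(x_i^k)\), which gives both claims directly. If \(d=0\), Step~2 of Theorem~\ref{thm:gapcert} shows the observed state equals every optimal pair. The screen gives \(x_{*,I}=0\), so \(x_I^0=0\) and the state is stationary.

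For \(B\ne0\) and \(d>0\), fix any optimal pair \(Z_*\). By Step~1 of Theorem~\ref{thm:gapcert}, \(x_{*,I}=0\) and \(\lambda-|B_i^\top y_*|\ge\eta_I\) for \(i\in I\). So the hypotheses of Theorem~\ref{thm:semiglobal} hold with margin at least \(\eta_I\), and the equilibrium is unique. Step~2 of Theorem~\ref{thm:gapcert} gives \(\norm{Z^0-Z_*}\le d\). The remark at the end of the proof of Proposition~\ref{prop:discrete-composite} allows any upper bound \(R\ge\norm{Z^0-Z_*}\) and any lower bound on the margin. Taking \(R=d\) and margin \(\eta_I\) yields \(K_d,C_W,\rho\), that is, \(\rho\le\rho_R\) with the true constants. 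For the initial value, \eqref{compareW} gives \(W_\delta(0)\le\frac{1+\delta L}{2}\cdot 2E(0)\cdot\frac12\cdot2\). More carefully, \((1+\delta L)E(0)+\delta h(x^0)\le\frac{1+\delta L}{2}d^2+\delta G=U\), because Lemma~\ref{lem:gap} gives \(h(x^0)\le G\). Theorem~\ref{thm:semiglobal} then gives \(\norm{Z(t_0+\tau)-Z_*}\le Ve^{-\rho\tau/2}\). Fej\'er monotonicity gives the same quantity bounded by \(d\), so the distance is at most \(\min\{d,Ve^{-\rho\tau/2}\}\).

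At \(\tau=T=2\ell/\rho\), we have \(Ve^{-\rho T/2}\le\eta_I/(2M)\) by the choice of \(\ell\). For \(M=0\) the dual coupling on \(I\) vanishes. Since the distance is nonincreasing, for all later times and \(i\in I\),
\[
|B_i^\top y(t)|\le|B_i^\top y_*|+M\norm{y(t)-y_*}\le\lambda-\eta_I+\eta_I/2=\lambda-\eta_I/2 .
\]
Step~5 of Theorem~\ref{thm:convergence} then gives \(|x_i(t)|\le(|x_i(t_0+T)|-\tfrac{\eta_I}{2}(t-t_0-T))_+\), with no reactivation. Finally, \(|x_i(t_0+T)|=|x_i(t_0+T)-x_{*,i}|\le\min\{d,Ve^{-\rho T/2}\}\), which yields \eqref{contdeadline}.

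The discrete case is identical. Use \((1+s\rho)^{-k/2}\) in place of \(e^{-\rho\tau/2}\), and choose \(N_s\) so that \((1+s\rho)^{-N_s/2}\le e^{-\ell}\). Apply the soft-thresholding inequality of Theorem~\ref{thm:discconv} with decrement \(s\eta_I/2\) per step. The ceilings account for the integer counts, giving \(K_s\).

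The main obstacle is justifying that Theorem~\ref{thm:semiglobal} applies with the surrogate constants \(d\) and \(\eta_I\) in place of \(R\) and the true margin \(\eta\). This requires checking that \(C_R\) is monotone, increasing in \(R\) and decreasing in \(\eta\), so that \(\rho\) is a valid lower rate. It also requires that the coercivity step only uses \(\norm u\le R\) and \(h\ge\eta\norm{x_I}_1\), both of which hold with the surrogates.
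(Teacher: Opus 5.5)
Your proposal is correct and follows essentially the same route as the paper. You bound $W_\delta$ at the observed state by $U$ using $h(x^0)\le G$ and the enclosure $d$, then run the restricted-rank composite decay with surrogate radius $d$ and margin $\eta_I$. Combining this with Fej\'er monotonicity gives the distance bound $\min\{d,Ve^{-\rho\tau/2}\}$ and its discrete analogue, and the choices of $\ell$ and $N_s$ secure the margin $\eta_I/2$ before the shrinkage argument. The only difference is cosmetic: you justify the surrogate constants through monotonicity of $C_R$ in $(R,\eta)$, whereas the paper simply reapplies the coercivity estimate with $d$ and $\eta_I$, which you also note is enough.
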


\begin{proof}
Assume \(B\ne0\) and \(d>0\).

\emph{Step 1: a common decay estimate.}
The construction in Theorem~\ref{thm:gapcert} encloses a fixed
optimal pair within distance \(d\).
Its optimizer screen gives a strict dual margin of at least
\(\eta_I\) on \(I\).
Both Fej\'er inequalities keep the future distance at most \(d\).
The restricted-rank estimate with radius \(d\) and margin
lower bound \(\eta_I\) gives
\(W_\delta\le C_W(h+\norm{Bu}^2+\norm v^2)\).
Theorem~\ref{thm:composite} and
Proposition~\ref{prop:discrete-composite} therefore yield
\[
W_\delta(t_0+\tau)\le e^{-\rho\tau}W_\delta(t_0),
\qquad W_k\le(1+s\rho)^{-k}W_0.
\]
Lemma~\ref{lem:gap} gives \(h(x^0)\le G\).
Hence \eqref{compareW} gives
\(W_\delta(t_0)\le U\) and \(W_0\le U\).
Combining the Fej\'er and exponential bounds yields
\begin{equation}\label{eq:observed-matched-enclosure}
\begin{aligned}
\norm{Z(t_0+\tau)-Z_*}&\le
\min\{d,V e^{-\rho\tau/2}\},\qquad \tau\ge0,\\
\norm{Z^k-Z_*}&\le
\min\{d,V(1+s\rho)^{-k/2}\},\qquad k\ge0.
\end{aligned}
\end{equation}

\emph{Step 2: the continuous deadline.}
By the definition of \(T\), the optimal margin and
\eqref{eq:observed-matched-enclosure} give
\(|B_i^\top y(t)|\le\lambda-\eta_I/2\)
for every \(i\in I\) and \(t\ge t_0+T\).
The hitting inequality then needs at most
\(2\norm{x_I(t_0+T)}_\infty/\eta_I\) additional time.
Since \(x_{*,I}=0\), the coordinate norm is bounded by
\(\min\{d,V e^{-\rho T/2}\}\).
This proves \eqref{contdeadline}.

\emph{Step 3: the discrete deadline.}
For \(M>0\), the definition of \(N_s\) gives
\(MV(1+s\rho)^{-N_s/2}\le\eta_I/2\).
For every \(i\in I\) and \(k\ge N_s\),
\[
|B_i^\top y^{k+1}|
\le\lambda-\eta_I+
MV(1+s\rho)^{-(k+1)/2}
\le\lambda-\eta_I/2.
\]
For \(M=0\), this margin holds immediately.
Equation~\eqref{soft} gives
\(|x_i^{k+1}|\le(|x_i^k|-s\eta_I/2)_+\).
At \(N_s\), the coordinate magnitudes are at most
\(A_s:=\min\{d,V(1+s\rho)^{-N_s/2}\}\).
Induction yields
\(|x_i^{N_s+j}|\le(A_s-js\eta_I/2)_+\)
for every integer \(j\ge0\), proving \eqref{discdeadline}
and permanence.
The cases \(B=0\) and \(d=0\) follow directly from
\eqref{flow} and \eqref{soft}.
\end{proof}

\begin{remark}[Continuous-time limit of the deadline]
\label{rem:small}
Under Theorem~\ref{thm:eventual}, fix the data, observed state,
and parameters \(z,I,\delta\), with \(B\ne0\) and \(d>0\).
As \(s\downarrow0\), the discrete deadline multiplied by \(s\)
converges to the continuous bound:
\begin{equation}\label{eq:deadline-continuous-limit}
sK_s\longrightarrow
T+\frac{2\min\{d,V e^{-\rho T/2}\}}{\eta_I}.
\end{equation}
Indeed, put \(b_s=\log(1+s\rho)\), so \(b_s/s\to\rho\).
For \(\ell>0\), the definition of \(N_s\) gives
\(2\ell\le N_sb_s<2\ell+b_s\) and
\(2\ell s/b_s\le sN_s<2\ell s/b_s+s\).
Hence \(sN_s\to T\), \(N_sb_s\to\rho T\), and
\(A_s\to\min\{d,V e^{-\rho T/2}\}\).
After multiplication by \(s\), the second ceiling changes
\(2A_s/\eta_I\) by less than \(s\), proving
\eqref{eq:deadline-continuous-limit}.
For \(\ell=0\), \(N_s=0\), and the limit follows directly.

For \(B=0\), the discrete shrinkage bound multiplied by \(s\)
tends to \(\norm{x_I^0}_\infty/\lambda\).
For \(d=0\), both deadlines are zero.
These are sufficient upper bounds and may exceed the actual
identification times. Small values of \(\sigma\) or \(\eta_I\)
can make them conservative.
\end{remark}

\begin{remark}[Established polyhedral error-bound comparison]
\label{rem:polyhedral-comparison}
Write \(\Omega=A^{-1}(0)\).
The graph of \(\partial\norm{\cdot}_1\) is a finite union
of polyhedra, so adding the linear saddle terms makes
\(A\) piecewise polyhedral.
Suppose a metric-subregularity modulus \(\kappa_R\) is
available on a fixed bounded set containing the iterates:
\[
\operatorname{dist}(Z,\Omega)\le
\kappa_R\operatorname{dist}(0,A(Z)).
\]
For \(Z^{k+1}=J_{sA}Z^k\),
\((Z^k-Z^{k+1})/s\in A(Z^{k+1})\).
The set \(\Omega=\arg\min P\times\{y_*\}\) is nonempty,
closed, and convex.
Applying the resolvent Fej\'er inequality with the projection
of \(Z^k\) onto \(\Omega\) gives
\[
\operatorname{dist}(Z^{k+1},\Omega)^2
\le\operatorname{dist}(Z^k,\Omega)^2
-\norm{Z^k-Z^{k+1}}^2.
\]
The error bound then yields
\[
\operatorname{dist}(Z^{k+1},\Omega)\le
\frac{\kappa_R}{\sqrt{\kappa_R^2+s^2}}
\operatorname{dist}(Z^k,\Omega).
\]
This linear convergence mechanism is established in
\cite[Section~2.2, Lemma~2 and Theorem~2]{LuQu}.
The modulus \(\kappa_R\) may be difficult to compute.
Theorem~\ref{thm:eventual} instead gives explicit constants
for a restricted-rank certificate and converts decay into
an identification deadline, including its continuous-time limit.
\end{remark}
\section{Invariant balls and the choice of center}\label{sec:invariant-balls}
An invariant ball controls every later dual state.
Using centers \((0,z)\), with \(z\) strictly dual feasible,
gives identification deadlines without rank assumptions
or optimizer knowledge.
We find the smallest invariant radius for the continuous
flow within this family and optimize the center through
an exact finite conic formulation.
An all-center obstruction and an example attaining its bound
describe the limits of the construction.
We retain the finite-dimensional setting of
Section~\ref{sec:preliminaries}, with \(\lambda,\gamma>0\).

Let \(z\in\R^m\) be strictly dual feasible, and put
\[
\mu_z:=\lambda-\norm{B^\top z}_\infty>0,\qquad
\tau_z:=\norm{b+\gamma z}.
\]
Define \(C_z=(0,z)\),
\(H_z(x,y)=(\norm x^2+\norm{y-z}^2)/2\), and
\begin{equation}\label{Rsharp}
R_\sharp(z)=\max_{0\le t\le\tau_z/\gamma}
\left[t^2+
\left(\frac{\tau_z t-\gamma t^2}{\mu_z}\right)^2
\right]^{1/2},
\end{equation}
with \(R_\sharp=0\) when \(\tau_z=0\).
This requires only a one-dimensional maximization of known data.
A simple upper bound is
\(R_\sharp\le[
(\tau_z/\gamma)^2+
(\tau_z^2/(4\gamma\mu_z))^2]^{1/2}\).
Write \(K(z,R)=\{Z:\norm{Z-C_z}\le R\}\), \(R\ge0\).

\begin{theorem}[Exact invariant-ball radius]\label{lem:radial}
For every \((x,y)\) and \(\xi\in\partial g(x)\),
\begin{equation}\label{radial}
\ip{(x,y)-C_z}{(\xi+B^\top y,-Bx+\gamma y+b)}
\ge\mu_z\norm x+\gamma\norm{y-z}^2
-\tau_z\norm{y-z}.
\end{equation}
The lower bound is attained at every prescribed pair of norms
\(\norm x=a\), \(\norm{y-z}=t\).
For every \(R\ge0\), the ball \(K(z,R)\) is forward invariant
for \eqref{flow} from every initial point in the ball
if and only if \(R\ge R_\sharp(z)\).
Equivalently,
\begin{equation}\label{eq:radius-angular}
R_\sharp(z)=\sup_{0<u\le1}
\frac{u\tau_z-\mu_z\sqrt{1-u^2}}{\gamma u^2}.
\end{equation}
Every ball with \(R\ge R_\sharp(z)\) is also invariant
under each exact full resolvent \(J_{sA}\), \(s>0\).
Minimality concerns the continuous flow within this center
family, not a particular fixed resolvent step.
\end{theorem}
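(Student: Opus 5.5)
The plan is to prove \eqref{radial} by a direct expansion, read off the invariant radius from the sign of \(\frac{d}{dt}H_z\) on spheres, and then transfer invariance to \(J_{sA}\) by a one-step monotonicity argument.

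\emph{Step 1: the radial inequality.} Write \(B^\top y=B^\top z+B^\top(y-z)\) and expand the pairing in \eqref{radial}. The skew terms \(\ip{x}{B^\top(y-z)}\) and \(-\ip{y-z}{Bx}\) cancel. The dual part is rewritten as \(\gamma\norm{y-z}^2+\ip{y-z}{b+\gamma z}\). This leaves
\[
\ip{x}{\xi}+\ip{x}{B^\top z}+\gamma\norm{y-z}^2+\ip{y-z}{b+\gamma z}.
\]
Homogeneity gives \(\ip x\xi=\lambda\norm x_1\). Hölder gives \(\ip{x}{B^\top z}\ge-\norm{B^\top z}_\infty\norm x_1\). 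Cauchy--Schwarz bounds the last term below by \(-\tau_z\norm{y-z}\). Finally \(\norm x_1\ge\norm x\), since \(\mu_z>0\).

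For attainment with \(\norm x=a\) and \(\norm{y-z}=t\), take \(x=-a\,\operatorname{sign}(B_i^\top z)e_i\), where \(i\) attains \(\norm{B^\top z}_\infty\). Take \(y-z=-t(b+\gamma z)/\tau_z\), or any vector of norm \(t\) if \(\tau_z=0\). Every inequality above is then an equality, and this holds for every \(\xi\in\partial g(x)\), because \(\ip x\xi=\lambda\norm x_1\) does not depend on the selection.

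\emph{Step 2: the continuous flow.} Put \(f(t)=\tau_zt-\gamma t^2\) and \(\varphi(a,t)=\mu_za-f(t)\). Along a strong solution, \eqref{radial} gives \(\frac{d}{dt}H_z\le-\varphi(\norm x,\norm{y-z})\) almost everywhere, with equality available at the extremal configurations. The key elementary observation concerns the set \(\{\varphi<0\}\). It is contained in \(\{0<t<\tau_z/\gamma,\ a<f(t)/\mu_z\}\), and by \eqref{Rsharp} every point of this set has \(a^2+t^2<R_\sharp^2\).

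\emph{Sufficiency.} Let \(R\ge R_\sharp\). Then \(\frac{d}{dt}H_z\le0\) almost everywhere whenever \(H_z>R^2/2\). If \(H_z\) exceeded \(R^2/2\) at some time, we could go back to the last time it equalled \(R^2/2\). On the interval after that time, \(H_z\) is absolutely continuous and nonincreasing, which is a contradiction.

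\emph{Necessity.} Let \(R<R_\sharp\). Pick \(t_*\) attaining \eqref{Rsharp}; then \(t_*\le\tau_z/\gamma\) and \(R_\sharp\ge t_*\). By continuity, I expect to find \(t\in[0,R]\) with \(\sqrt{R^2-t^2}<f(t)/\mu_z\). This is the fiddly point: one must check that the curve \(t\mapsto(t,f(t)/\mu_z)\) crosses the circle of radius \(R\), which holds because it starts at the origin and reaches radius \(R_\sharp>R\). Start from the attaining configuration of Step~1 with \(a=\sqrt{R^2-t^2}\). The right derivative of the strong solution at \(0\) is \(-A^\circ Z(0)\), so \(\frac{d}{dt}H_z(0+)=-\varphi>0\) and the solution leaves the ball.

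The angular formula \eqref{eq:radius-angular} comes from substituting \(t=Ru\) and \(a=R\sqrt{1-u^2}\). Then \(\varphi\ge0\) on the sphere becomes \(R\ge(u\tau_z-\mu_z\sqrt{1-u^2})/(\gamma u^2)\) for each \(u\in(0,1]\). Since invariance holds exactly for \(R\ge R_\sharp\), the supremum equals \(R_\sharp\). Here one notes that the supremum is \(\ge0\), since the expression is positive near \(u=1\) when \(\tau_z>0\). The degenerate case \(\tau_z=0\) has to be handled separately.

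\emph{Step 3: the resolvent.} Let \(Z^+=J_{sA}Z\), so that \(Z-Z^+=s\,w\) with \(w\in A(Z^+)\). Suppose \(\norm{Z^+-C_z}>R\ge R_\sharp\). Step~1 together with the observation from Step~2 gives \(\ip{Z^+-C_z}{w}\ge\varphi\ge0\). Hence
\[
\norm{Z-C_z}^2=\norm{Z^+-C_z}^2+2s\ip{Z^+-C_z}{w}+\norm{Z-Z^+}^2>R^2,
\]
contradicting \(Z\in K(z,R)\). I expect the main obstacle to be the necessity direction in Step~2: exhibiting a sphere point where \(\varphi<0\), and justifying the right-derivative argument for the strong solution.
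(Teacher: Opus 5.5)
Your route is essentially the paper's. Step~1 uses the same radial minimization and the same extremal configuration. Your observation that \(\{\varphi<0\}\) lies strictly inside radius \(R_\sharp(z)\) is the paper's fact that \(Q_z>0\) whenever \(\norm{(x,y-z)}>R_\sharp(z)\); your last-exit argument replaces its positive-part chain rule. The resolvent step uses the same quadratic identity, and you rightly note that \(\ip{Z^+-C_z}{w}\ge0\) already suffices. For necessity you use the right derivative \(-A^\circ Z(0)\) from Br\'ezis's theory, where the paper uses continuity and integration. You also recover \eqref{eq:radius-angular} from the invariance characterization, whereas the paper matches the two parametrizations of the negative set directly. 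Both variants work.

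\textbf{Gap: the case \(R=0<R_\sharp(z)\).} This is \(\tau_z>0\), and the statement (``for every \(R\ge0\)'') includes it, but your necessity argument does not cover it. The ball is then the single point \(C_z\), where \(\varphi(0,0)=0\). No \(t\in[0,R]\) satisfies \(\sqrt{R^2-t^2}<f(t)/\mu_z\), and \(\frac{d}{dt}H_z(0+)=-Q_z(0,0)=0\), so your first-order exit argument says nothing. You need the paper's separate observation. Since \(-B^\top z\in\partial g(0)\), the point \(C_z\) is an equilibrium exactly when \(b+\gamma z=0\). Concretely, the right side of the dual equation is continuous, so \(y\) is \(C^1\) with \(\dot y(0)=-(b+\gamma z)\ne0\). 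Hence the solution leaves \(\{C_z\}\) immediately.

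\textbf{A smaller point for \(R>0\).} Your crossing argument, as stated, only gives the equality \(t^2+(f(t)/\mu_z)^2=R^2\), not the strict inequality you need. Either fix works:
\begin{itemize}
\item Take the last crossing before \(t_*\) and move slightly to the right.
\item Use that \(\{\varphi<0\}\) is star-shaped, since \(f(\theta t)\ge\theta f(t)\) for \(0<\theta\le1\). Then scale a point of \(\{\varphi<0\}\) with radius larger than \(R\) down to radius exactly \(R\).
\end{itemize}
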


\begin{proof}
\emph{Step 1: exact radial minimization.}
Put \(w=y-z\), \(a=\norm x\), and \(t=\norm w\).
Cancellation and \(\ip x\xi=g(x)\) give the continuous,
selection-independent pairing
\[
Q_z(x,w)=\lambda\norm x_1+\ip{B^\top z}x
+\gamma\norm w^2+\ip{b+\gamma z}w.
\]
At fixed \(a,t\), its minimum is
\(\mu_za+\gamma t^2-\tau_zt\).
For the primal term, choose a coordinate vector of length \(a\),
with sign opposite to a component attaining
\(\norm{B^\top z}_\infty\).
If \(B^\top z=0\), any signed coordinate vector suffices.
For the dual term, take \(w=-t(b+\gamma z)/\tau_z\)
when \(\tau_z>0\); otherwise, any direction suffices.
This proves \eqref{radial} and its sharpness.

\emph{Step 2: the boundary radius.}
For \(R>0\), write
\(a=R\sqrt{1-u^2}\), \(t=Ru\), \(0\le u\le1\).
The minimum boundary pairing is
\(R[\mu_z\sqrt{1-u^2}+\gamma Ru^2-\tau_zu]\).
It is nonnegative for every \(u\) exactly when \(R\)
is at least the supremum in \eqref{eq:radius-angular}.

For \(\tau_z>0\), negative pairings satisfy
\(0<t<\tau_z/\gamma\) and
\(0\le a<(\tau_zt-\gamma t^2)/\mu_z\).
The supremum of their attainable radii is therefore
\eqref{Rsharp}, including the endpoint values by limits.
Thus the two radius formulas agree.
For \(\tau_z=0\), both equal zero.
Applying the same argument to nonpositive pairings shows
that \(Q_z>0\) whenever \(\norm{(x,w)}>R_\sharp(z)\).

\emph{Step 3: continuous sufficiency and necessity.}
Along a strong solution, \(H_z\) is locally absolutely continuous
and \(\dot H_z=-Q_z(x,y-z)\) almost everywhere.
For \(R\ge R_\sharp(z)\), the positive-part chain rule gives
\(\frac{d}{dt}(H_z-R^2/2)_+\le0\) almost everywhere.
A solution starting in \(K(z,R)\) therefore remains there.

If \(0<R<R_\sharp(z)\), Step~2 gives a boundary point
with \(Q_z<0\).
Continuity of \(Q_z\) and the solution preserves this sign
for a short positive time.
Integrating \(\dot H_z=-Q_z\) gives immediate exit.
This argument requires no differentiability of the vector field.

For \(R=0\), the singleton is invariant exactly when
\(C_z\) is an equilibrium.
Since \(-B^\top z\in\partial g(0)\), this is equivalent
to \(b+\gamma z=0\), or \(\tau_z=0\).

\emph{Step 4: exact resolvents.}
For \(A^{k+1}=(Z^k-Z^{k+1})/s\in A(Z^{k+1})\),
the quadratic identity gives
\[
H_z(Z^{k+1})-H_z(Z^k)
=-s\ip{Z^{k+1}-C_z}{A^{k+1}}
-\tfrac12\norm{Z^{k+1}-Z^k}^2.
\]
If a step from \(K(z,R)\), \(R\ge R_\sharp(z)\),
landed outside the ball, its pairing would be strictly positive.
The displayed change in \(H_z\) would then be negative,
contradicting the increase in radius.
\end{proof}

\begin{lemma}[Closed-form evaluation of the radius]\label{lem:closed}
Put \(c_\sharp=(11+5\sqrt5)/2\).
Then \(R_\sharp(z)=\tau_z/\gamma\) if and only if
\(\tau_z^2\le c_\sharp\mu_z^2\).
If \(\tau_z^2>c_\sharp\mu_z^2\), set
\[
t_-=
\frac{3\tau_z-\sqrt{\tau_z^2-8\mu_z^2}}{4\gamma}.
\]
Then \(R_\sharp(z)\) is the expression in \eqref{Rsharp}
evaluated at \(t_-\).
\end{lemma}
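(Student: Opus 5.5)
The plan is to rescale the one-dimensional maximization in \eqref{Rsharp} to a fixed interval, depending on the single ratio \(\kappa=\tau_z/\mu_z\), and then do elementary calculus. If \(\tau_z=0\), both sides of the first equivalence are zero and the condition \(0\le c_\sharp\mu_z^2\) holds, so assume \(\tau_z>0\). Substitute \(t=(\tau_z/\gamma)\sigma\) with \(\sigma\in[0,1]\). Then \((\tau_zt-\gamma t^2)/\mu_z=(\tau_z/\gamma)\kappa\,\sigma(1-\sigma)\), and hence
\[
R_\sharp(z)^2=\Bigl(\frac{\tau_z}{\gamma}\Bigr)^2\max_{0\le\sigma\le1}\varphi(\sigma),
\qquad
\varphi(\sigma)=\sigma^2+\kappa^2\sigma^2(1-\sigma)^2 .
\]
Since \(\varphi(1)=1\), the identity \(R_\sharp=\tau_z/\gamma\) holds exactly when \(\varphi\le1\) on \([0,1)\).

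\emph{Step 1: the threshold \(c_\sharp\).} For \(\sigma\in(0,1)\), the inequality \(\varphi(\sigma)\le1\) is \(1-\sigma^2\ge\kappa^2\sigma^2(1-\sigma)^2\). Dividing by \(1-\sigma>0\) gives \(1+\sigma\ge\kappa^2\sigma^2(1-\sigma)\). It therefore holds for all \(\sigma\) iff \(\kappa^2\le\min_{(0,1)}\psi\), where \(\psi(\sigma)=(1+\sigma)/(\sigma^2-\sigma^3)\). The function \(\psi\) tends to \(+\infty\) at both endpoints. Its derivative vanishes exactly when \(\sigma^2+\sigma-1=0\), that is, at \(\sigma_0=(\sqrt5-1)/2\). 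Using \(\sigma_0^2=1-\sigma_0\), one gets \(\sigma_0^2(1-\sigma_0)=(1-\sigma_0)^2=(7-3\sqrt5)/2\). Rationalizing then gives \(\psi(\sigma_0)=(1+\sqrt5)/(7-3\sqrt5)=(11+5\sqrt5)/2=c_\sharp\). Since \(\kappa^2\le c_\sharp\) is the same as \(\tau_z^2\le c_\sharp\mu_z^2\), this proves the first claim.

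\emph{Step 2: the interior maximizer.} Now let \(\kappa^2>c_\sharp\). Write \(\varphi'(\sigma)=2\sigma\,\chi(\sigma)\) with
\[
\chi(\sigma)=1+\kappa^2(1-\sigma)(1-2\sigma)=2\kappa^2\sigma^2-3\kappa^2\sigma+\kappa^2+1 .
\]
The discriminant of \(\chi\) is \(\kappa^4-8\kappa^2\), which is positive because \(c_\sharp>8\). So \(\chi\) has roots \(\sigma_\pm=\bigl(3\kappa\pm\sqrt{\kappa^2-8}\bigr)/(4\kappa)\). The function \(\chi\) is positive outside \([\sigma_-,\sigma_+]\) and negative between the roots. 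Hence \(\varphi\) increases, then decreases, then increases on \([0,1]\), so \(\sigma_-\) is the only interior local maximum and \(\max\varphi=\max\{\varphi(\sigma_-),\varphi(1)\}\). By Step 1, some \(\sigma\) has \(\varphi(\sigma)>1=\varphi(1)\), so the maximum is attained at \(\sigma_-\). Converting back through \(t=(\tau_z/\gamma)\sigma\) gives \(t_-=\bigl(3\tau_z-\sqrt{\tau_z^2-8\mu_z^2}\bigr)/(4\gamma)\), and \(R_\sharp(z)\) is the bracket of \eqref{Rsharp} evaluated there.

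The main obstacle is the sign bookkeeping in Step 2. One must confirm that both roots lie in \((0,1)\); this follows from \(\sqrt{\kappa^2-8}<\kappa\), which gives \(\sigma_\pm\in(1/2,1)\). One must also confirm that the smaller root, not the larger one, is the local maximum. The algebraic evaluation of \(c_\sharp\) in Step 1 is routine once \(\sigma_0^2=1-\sigma_0\) is used.
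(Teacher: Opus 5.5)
Your proposal is correct and follows essentially the same route as the paper. Both use the rescaling $t=(\tau_z/\gamma)\sigma$ and the threshold function $(1+\sigma)/(\sigma^2(1-\sigma))$, minimized at $(\sqrt5-1)/2$ with value $c_\sharp$. Both then take the interior stationary points from the same quadratic and identify the smaller root as the local maximum; you spell out details the paper leaves terse, namely that the roots lie in $(1/2,1)$ and the sign pattern of $\varphi'$.
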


\begin{proof}
\emph{Step 1: endpoint dominance.}
The case \(\tau_z=0\) is immediate.
Otherwise, put \(v=\gamma t/\tau_z\) and
\(k=\tau_z^2/\mu_z^2\).
The squared maximand divided by \((\tau_z/\gamma)^2\)
becomes \(v^2+kv^2(1-v)^2\).
It is at most one on \([0,1]\) exactly when
\(k\le\inf_{0<v<1}(1+v)/(v^2(1-v))\).
Differentiation gives the unique minimum at
\(v=(\sqrt5-1)/2\), with value \(c_\sharp\).

\emph{Step 2: the interior maximum.}
Above this threshold, the maximum exceeds the endpoint value.
The interior stationary points solve
\(2\gamma^2t^2-3\gamma\tau_zt+\tau_z^2+\mu_z^2=0\).
Both roots lie in \((0,\tau_z/\gamma)\).
The smaller root \(t_-\) is a local maximum;
the larger root is a local minimum.
This proves the formula.
\end{proof}

An invariant ball containing the observed state gives uniform
coordinate margins and hence identification deadlines.

\begin{theorem}[Prospective permanent-zero certificate]\label{thm:ball}
At a state \((x^0,y^0)\) at time \(t_0\), choose a
strictly dual-feasible \(z\) and set
\[
R=\max\big\{\sqrt{\norm{x^0}^2+\norm{y^0-z}^2},
R_\sharp(z)\big\}.
\]
For a nonempty \(I\subseteq\{1,\ldots,p\}\), assume
\begin{equation}\label{ballmargin}
\eps_{i,z}=\lambda-|B_i^\top z|-\norm{B_i}R,
\qquad \eps_{I,z}=\min_{i\in I}\eps_{i,z}>0.
\end{equation}
Then the exact flow satisfies
\[
x_i(t)=0\quad
\left(i\in I,\quad
t\ge t_0+\max_{j\in I}\frac{|x_j^0|}{\eps_{j,z}}\right).
\]
For the exact full resolvent starting at the same state,
\(x_i^k=0\) for every \(i\in I\) and
\(k\ge\max_{j\in I}\lceil|x_j^0|/(s\eps_{j,z})\rceil\),
for every \(s>0\).
All inputs are observable; no optimizer or restricted
singular value is needed.
\end{theorem}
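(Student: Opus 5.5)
The plan is to combine forward invariance of \(K(z,R)\) from Theorem~\ref{lem:radial} with the scalar shrinkage argument already used in Step~5 of Theorem~\ref{thm:convergence} and Step~3 of Theorem~\ref{thm:discconv}.

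\emph{Invariance.} By the choice of \(R\), the observed state satisfies \(\norm{(x^0,y^0)-C_z}\le R\), so \((x^0,y^0)\in K(z,R)\). Because \(R\ge R_\sharp(z)\), Theorem~\ref{lem:radial} makes \(K(z,R)\) forward invariant for \eqref{flow}. The flow is autonomous, so the solution started at time \(t_0\) satisfies \(\norm{y(t)-z}\le\norm{(x(t),y(t))-C_z}\le R\) for all \(t\ge t_0\). The same theorem makes \(K(z,R)\) invariant under \(J_{sA}\) for every \(s>0\). Induction then gives \(\norm{y^k-z}\le R\) for all \(k\ge0\), and in particular for every \(y^{k+1}\).

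\emph{Uniform margin.} By Cauchy--Schwarz, for \(i\in I\) and every later dual state,
\[
|B_i^\top y|\le|B_i^\top z|+\norm{B_i}\norm{y-z}\le\lambda-\eps_{i,z}.
\]

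\emph{Continuous shrinkage.} Wherever \(x_i(t)\ne0\), the inclusion gives \(\xi_i=\lambda\operatorname{sign}(x_i)\) almost everywhere. Hence
\[
\frac{d}{dt}|x_i|=-\lambda-\operatorname{sign}(x_i)B_i^\top y\le-\eps_{i,z}.
\]
On the zero set the derivative vanishes almost everywhere. The interval argument of Theorem~\ref{thm:convergence} then applies: a nonzero excursion cannot begin from a zero after \(t_0\), since integrating from its left endpoint would make \(|x_i|\) negative. This yields \(|x_i(t)|\le(|x_i^0|-\eps_{i,z}(t-t_0))_+\), and taking the maximum over \(j\in I\) gives the stated time.

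\emph{Discrete shrinkage.} From \eqref{soft} and the scalar bound \(|\soft_{s\lambda}(a-sc)|\le(|a|+s|c|-s\lambda)_+\) with \(c=B_i^\top y^{k+1}\), we get \(|x_i^{k+1}|\le(|x_i^k|-s\eps_{i,z})_+\). Induction gives \(|x_i^{j}|\le(|x_i^0|-js\eps_{i,z})_+\), which vanishes for \(j\ge\lceil|x_i^0|/(s\eps_{i,z})\rceil\) and stays zero afterward.

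\emph{Main obstacle.} The only delicate point is that invariance is needed from an arbitrary starting point inside the ball, not only from its boundary. Theorem~\ref{lem:radial} already covers this through the positive-part chain rule for \(H_z\) in the continuous case and through the strict-pairing contradiction in the resolvent case. So the remaining care lies in the almost-everywhere bookkeeping for \(|x_i|\), which is handled exactly as in Theorem~\ref{thm:convergence}. Every input, namely \(z\), \(R_\sharp(z)\) from \eqref{Rsharp} and the observed state, is computable, so no optimizer or singular value enters.
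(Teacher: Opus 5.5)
Your proposal is correct and follows the paper's proof: forward invariance of \(K(z,R)\) under the flow and every exact resolvent (Theorem~\ref{lem:radial}) bounds every later dual state by \(R\) around \(z\), giving the uniform margins \(\eps_{i,z}\). The paper then closes, as you do, with the hitting-and-permanence argument of Theorem~\ref{thm:convergence} for the flow and the soft-threshold iteration \eqref{soft} for the resolvent.
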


\begin{proof}
\emph{Step 1: uniform margins.}
Theorem~\ref{lem:radial} bounds
\(\norm{y(t)-z}\) and \(\norm{y^k-z}\) by \(R\)
at every later time or index.
Hence
\(|B_i^\top y|\le|B_i^\top z|+\norm{B_i}R
\le\lambda-\eps_{i,z}\).

\emph{Step 2: hitting and permanence.}
Apply the hitting-and-permanence argument from
Theorem~\ref{thm:convergence} to the flow,
or iterate \eqref{soft} for the resolvent.
\end{proof}

\begin{corollary}[Inexact invariant-ball certificate]\label{cor:ballerr}
Under the exact thresholding and known summable dual-residual
budget assumptions of Theorem~\ref{thm:inexact},
suppose the initial state lies in the ball of radius \(R\)
from Theorem~\ref{thm:ball}.
If
\[
\widetilde\eps_{i,z}=\lambda-|B_i^\top z|
-\norm{B_i}(R+E_{\rm err}),\qquad
\min_{i\in I}\widetilde\eps_{i,z}>0,
\]
every coordinate in \(I\) vanishes permanently within
\(\max_{i\in I}\lceil|x_i^0|/(s\widetilde\eps_{i,z})\rceil\)
steps from the observed state.
\end{corollary}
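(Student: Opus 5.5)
The plan is to combine the residual-perturbed nonexpansiveness argument of Theorem~\ref{thm:inexact} with the invariance of $K(z,R)$ from Theorem~\ref{lem:radial}, now using the ball center $C_z=(0,z)$ in place of an optimal pair. From the residual identity, exact soft thresholding \eqref{soft} gives
\[
Z^{k+1}=J_{sA}\bigl(Z^k+(0,\omega^{k+1})\bigr).
\]
Since $R\ge R_\sharp(z)$, Theorem~\ref{lem:radial} makes $K(z,R)$ invariant under $J_{sA}$. However, $C_z$ need not be a fixed point, so nonexpansiveness toward $C_z$ cannot be used directly. Instead, the key step is to show that the enlarged balls $K(z,R+e)$, $e\ge0$, are also invariant under $J_{sA}$. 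This holds because $R+e\ge R_\sharp(z)$, and Theorem~\ref{lem:radial} applies to every radius at least $R_\sharp(z)$.

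The induction runs as follows. Set $e_k=\sum_{j=k_0}^{k-1}\norm{\omega^{j+1}}$, with $e_{k_0}=0$, and claim $\norm{Z^k-C_z}\le R+e_k$. For the inductive step, the perturbed point $Z^k+(0,\omega^{k+1})$ lies within distance $R+e_k+\norm{\omega^{k+1}}=R+e_{k+1}$ of $C_z$. Invariance of $K(z,R+e_{k+1})$ under $J_{sA}$ then gives $\norm{Z^{k+1}-C_z}\le R+e_{k+1}\le R+E_{\rm err}$. The initial case is the hypothesis that the initial state lies in the ball of radius $R$ from Theorem~\ref{thm:ball}. Hence every future dual iterate satisfies $\norm{y^{k}-z}\le R+E_{\rm err}$, and in particular $\norm{y^{k+1}-z}\le R+E_{\rm err}$ for every $k\ge k_0$.

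The coordinate margins follow by Cauchy--Schwarz:
\[
|B_i^\top y^{k+1}|\le|B_i^\top z|+\norm{B_i}(R+E_{\rm err})=\lambda-\widetilde\eps_{i,z}.
\]
The scalar inequality $|\soft_{s\lambda}(a-sc)|\le(|a|+s|c|-s\lambda)_+$ used in Theorem~\ref{thm:discconv} then gives
\[
|x_i^{k+1}|\le(|x_i^k|-s\widetilde\eps_{i,z})_+ .
\]
Inducing from the observed index yields $|x_i^{k_0+j}|\le(|x_i^0|-js\widetilde\eps_{i,z})_+$. Each coordinate in $I$ therefore vanishes within $\lceil|x_i^0|/(s\widetilde\eps_{i,z})\rceil$ steps and stays zero. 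Taking the maximum over $i\in I$ gives the stated bound.

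The main obstacle is the enclosure step. A direct transfer of Theorem~\ref{thm:inexact} fails because $C_z$ is not a fixed point of $J_{sA}$. The repair is that minimality of $R_\sharp(z)$ in Theorem~\ref{lem:radial} covers every larger radius, so the growing family $K(z,R+e_k)$ stays invariant at every step. I will check that the invariance proof in Step~4 of Theorem~\ref{lem:radial} requires only that the pre-resolvent point lie in the ball, not that the point be an actual earlier iterate. That proof uses only the quadratic identity with $Z^k$ replaced by the perturbed input, so it should go through.
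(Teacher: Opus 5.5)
Your proof is correct, and the coordinate-shrinkage part coincides with the paper's. The difference is in how you obtain the inflated enclosure \(\norm{Z^k-C_z}\le R+E_{\rm err}\).

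The paper keeps the single ball \(K=K(z,R)\) and uses nonexpansiveness of \(J_{sA}\). For \(W=Z^k+(0,\omega^{k+1})\), the projection \(P_KW\) is mapped back into \(K\), so \(\operatorname{dist}(Z^{k+1},K)\le\norm{J_{sA}W-J_{sA}P_KW}\le\operatorname{dist}(W,K)\le\operatorname{dist}(Z^k,K)+\norm{\omega^{k+1}}\). Tracking the distance to an invariant set rather than to a point gets around the fact, which you correctly identify, that \(C_z\) is not a fixed point.

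You instead drop nonexpansiveness entirely and use the nested family \(K(z,R+e)\), all invariant because \(R+e\ge R_\sharp(z)\). Both routes give the same partial-sum bound \(R+e_k\). The paper's argument is more portable: it works for any closed set invariant under a nonexpansive map, needs no nested structure, and mirrors the proof of Theorem~\ref{thm:inexact}. Yours uses only the radial pairing estimate of Theorem~\ref{lem:radial} and is specific to the ball family.

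The check you plan on Step~4 is unnecessary. Invariance there is a property of the map \(J_{sA}\) itself, since \((W-J_{sA}W)/s\in A(J_{sA}W)\) for every input \(W\). Finally, your last display mixes the indices \(k_0\) and \(0\) for the observed state; this is only notational.
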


\begin{proof}
\emph{Step 1: an inflated enclosure.}
The exact resolvent maps \(K=K(z,R)\) into itself.
Nonexpansiveness gives
\[
\operatorname{dist}(Z^{k+1},K)\le
\operatorname{dist}(Z^k+(0,\omega^{k+1}),K)
\le\operatorname{dist}(Z^k,K)+\norm{\omega^{k+1}}.
\]
For the first inequality, compare the shifted state with
its projection onto \(K\), whose resolvent image remains in \(K\).
Induction gives \(\norm{Z^k-C_z}\le R+E_{\rm err}\).

\emph{Step 2: coordinate shrinkage.}
Apply \eqref{soft} with the inflated margins.
\end{proof}

We now choose the center and radius jointly.
A fixed positive lower bound on dual feasibility
keeps the admissible set closed.

\begin{proposition}[Convex center selection]
\label{prop:convex-center-design}
Fix \(Z^0=(x^0,y^0)\), a nonempty
\(I\subseteq\{1,\ldots,p\}\), and
\(0<\underline\mu\le\lambda\).
Put
\(\mathcal D_{\underline\mu}
=\{z:\norm{B^\top z}_\infty\le\lambda-\underline\mu\}\).
The problem
\begin{equation}\label{eq:center-design}
\begin{aligned}
\max_{z,R,\eps}\quad&\eps\\
\text{subject to}\quad&
z\in\mathcal D_{\underline\mu},\quad R\ge0,
\quad\norm{(x^0,y^0-z)}\le R,\\
&R\ge R_\sharp(z),\\
&|B_i^\top z|+\norm{B_i}R+\eps\le\lambda
\quad(i\in I)
\end{aligned}
\end{equation}
is convex, feasible, and attains its maximum.
The variable \(\eps\in\R\) is unrestricted in sign.
Its optimal value is positive exactly when a positive
common-margin certificate exists in this feasibility class.
If every selected column is zero, the optimal value is \(\lambda\).
\end{proposition}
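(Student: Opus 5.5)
The plan is to check each constraint of \eqref{eq:center-design} for convexity. The only nontrivial one is \(R\ge R_\sharp(z)\), and the way around it is to express this constraint through the angular formula \eqref{eq:radius-angular}. The set \(\mathcal D_{\underline\mu}\) is a polyhedron. The constraints \(\norm{(x^0,y^0-z)}\le R\) and \(|B_i^\top z|+\norm{B_i}R+\eps\le\lambda\) are second-order cone and polyhedral constraints, jointly convex in \((z,R,\eps)\).

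For the radius constraint I would use the angular formula from Theorem~\ref{lem:radial}. Fix \(u\in(0,1]\). Then \(R\ge R_\sharp(z)\) is equivalent to requiring, for every such \(u\),
\[
u\tau_z-\mu_z\sqrt{1-u^2}-\gamma u^2R\le0.
\]
Here \(\tau_z=\norm{b+\gamma z}\) is convex in \(z\). Also \(\mu_z=\lambda-\norm{B^\top z}_\infty\) is concave, so \(-\mu_z\) is convex. The coefficients \(u\ge0\) and \(\sqrt{1-u^2}\ge0\) are nonnegative, and the term in \(R\) is linear. Hence each constraint is convex in \((z,R)\), and so is their intersection over \(u\). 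On \(\mathcal D_{\underline\mu}\) we have \(\mu_z\ge\underline\mu>0\), so \(z\) is strictly dual feasible and \(R_\sharp(z)\) is well defined. The angular formula also covers \(\tau_z=0\), since then the supremum is \(\le0\) and \(R\ge0\) already handles it. I expect the main care to be in this step: it must be \(\mu_z\) that enters with a favorable sign, not \(1/\mu_z\) as in \eqref{Rsharp}. That is why the angular form is the right one to use.

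Next I would show feasibility. Take \(z=0\), which lies in \(\mathcal D_{\underline\mu}\) because \(\underline\mu\le\lambda\). Take \(R=\max\{\norm{(x^0,y^0)},R_\sharp(0)\}\), which is finite since the maximization in \eqref{Rsharp} is over a compact interval. Take \(\eps=\lambda-\max_{i\in I}(|B_i^\top z|+\norm{B_i}R)\).

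For attainment, I would first eliminate \(\eps\). For fixed \((z,R)\) its optimal value is \(\lambda-\max_{i\in I}(\ldots)\), which is continuous. The remaining constraint set in \((z,R)\) is closed: the function \(R_\sharp\) is a supremum of continuous functions, hence lower semicontinuous, and \(\mathcal D_{\underline\mu}\) is closed. The value function is nonincreasing in \(R\). I would bound \(R\) as follows. If some selected column is nonzero, then any \(R\) beyond the feasible level only lowers \(\eps\) below the feasible value, so I restrict to a bounded range of \(R\). The set \(\mathcal D_{\underline\mu}\) may be unbounded when \(B^\top\) has a kernel. In that case the ball constraint \(\norm{y^0-z}\le R\) bounds \(z\) once \(R\) is bounded. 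If all selected columns are zero, the objective is \(\lambda\) independently of \((z,R)\), so the maximum is \(\lambda\) and is attained at the feasible point above. In the nonzero case, maximizing a continuous function over a compact feasible set gives attainment.

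Finally I would prove the positivity equivalence. If the optimal value is positive, the optimal \((z,R)\) satisfies \eqref{ballmargin} with \(R\) at least the maximum used in Theorem~\ref{thm:ball}, since that margin is monotone in \(R\). Conversely, any positive common-margin certificate of that theorem with \(z\in\mathcal D_{\underline\mu}\) is a feasible point with \(\eps>0\).
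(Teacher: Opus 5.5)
Your proposal is correct and follows essentially the same route as the paper. You get convexity of \(R\ge R_\sharp(z)\) from the angular formula \eqref{eq:radius-angular}, as a family of inequalities convex in \((z,R)\) because \(\tau_z\) is convex and \(\mu_z\) concave; you get feasibility from the baseline point \(z=0\); and you get attainment from compactness of a superlevel set, with \(R\) bounded by a nonzero selected column and \(z\) bounded by state containment. The only differences are cosmetic: you eliminate \(\eps\) explicitly, and you index the family by \(u\in(0,1]\) instead of \([0,1]\), which loses nothing because the \(u=0\) inequality is just dual feasibility.
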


\begin{proof}
\emph{Step 1: convex invariance constraints.}
By \eqref{eq:radius-angular}, \(R\ge R_\sharp(z)\)
is equivalent to
\begin{equation}\label{eq:convex-invariance}
u\norm{b+\gamma z}+\sqrt{1-u^2}\norm{B^\top z}_\infty
-\lambda\sqrt{1-u^2}-\gamma Ru^2\le0
\quad(0\le u\le1).
\end{equation}
For \(R=0\), the constraint at \(u=1\) forces
\(b+\gamma z=0\), as required for singleton invariance.
Each inequality defines a closed convex set in \((z,R)\).
The remaining constraints are also convex and closed.

\emph{Step 2: feasibility and certificate meaning.}
Choose \(z=0\), a sufficiently large \(R\), and a sufficiently
negative \(\eps_0\).
This gives a feasible baseline.
Every feasible point with \(\eps>0\) gives a certificate
by Theorem~\ref{thm:ball}.
Conversely, every common-margin ball certificate in this
feasibility class gives a feasible point.

\emph{Step 3: attainment.}
If \(B_j\ne0\) for some \(j\in I\), every feasible point
with \(\eps\ge\eps_0\) satisfies
\(R\le(\lambda-\eps_0)/\norm{B_j}\).
State containment then bounds \(z\), and
\(\eps_0\le\eps\le\lambda\).
This feasible superlevel set is closed and bounded in finite
dimensions, hence compact.
A maximizing sequence therefore has a feasible subsequential
limit attaining the maximum.
If all selected columns vanish, the constraints reduce to
\(\eps\le\lambda\), attained by the baseline ball.
\end{proof}

The continuum of invariance constraints has an exact finite
representation using two positive-semidefinite blocks.

\begin{proposition}[Exact finite conic formulation]
\label{prop:finite-center-sdp}
For \(z\in\mathcal D_{\underline\mu}\) and \(R\ge0\),
the condition \(R\ge R_\sharp(z)\) is equivalent to the
existence of \(\mu,\tau\in\R\), \(Q\in\mathbb S^3_+\),
and \(H\in\mathbb S^2_+\) satisfying
\begin{equation}\label{eq:radius-auxiliary}
\mu\ge\underline\mu,\qquad
\mu+\norm{B^\top z}_\infty\le\lambda,\qquad
\norm{b+\gamma z}\le\tau,
\end{equation}
and, with indices starting at zero,
\begin{equation}\label{eq:quartic-coefficients}
\begin{aligned}
Q_{00}&=\mu,&
2Q_{01}+H_{00}&=-2\tau,\\
2Q_{02}+Q_{11}+2H_{01}-H_{00}&=4\gamma R,&
2Q_{12}+H_{11}-2H_{01}&=-2\tau,\\
Q_{22}-H_{11}&=-\mu.
\end{aligned}
\end{equation}
Here \(\mathbb S^d_+\) is the cone of symmetric
positive-semidefinite \(d\times d\) matrices.
Thus \eqref{eq:center-design} has an exact finite
semidefinite formulation with second-order cone constraints.
\end{proposition}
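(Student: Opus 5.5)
The plan is to rewrite the continuum of constraints \eqref{eq:convex-invariance} as nonnegativity of one quartic polynomial on \([0,1]\). A classical sum-of-squares representation for univariate polynomials on an interval then turns that condition into the two semidefinite blocks. A monotonicity argument shows that the auxiliary variables \(\mu,\tau\) may relax \(\mu_z,\tau_z\) without changing the feasible set.

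\emph{Step 1: rational parametrization.} By \eqref{eq:radius-angular} and the proof of Proposition~\ref{prop:convex-center-design}, \(R\ge R_\sharp(z)\) is equivalent to
\[
\phi_{\mu_z,\tau_z}(u):=\gamma Ru^2-\tau_z u+\mu_z\sqrt{1-u^2}\ge0
\qquad(0\le u\le1).
\]
The value \(u=0\) is harmless, since there \(\phi=\mu_z>0\). Substitute the half-angle map
\[
u=\frac{2s}{1+s^2},\qquad \sqrt{1-u^2}=\frac{1-s^2}{1+s^2},\qquad s\in[0,1].
\]
This map is a bijection of \([0,1]\) onto \([0,1]\). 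Multiplying by \((1+s^2)^2>0\) gives the equivalent condition \(p_{\mu_z,\tau_z}(s)\ge0\) on \([0,1]\), where
\[
p_{\mu,\tau}(s)=\mu-2\tau s+4\gamma Rs^2-2\tau s^3-\mu s^4 .
\]
Its coefficients are exactly the right-hand sides in \eqref{eq:quartic-coefficients}.

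\emph{Step 2: Markov--Luk\'acs certificate.} A real polynomial of even degree \(4\) is nonnegative on \([0,1]\) if and only if it can be written as
\[
p(s)=\sigma_0(s)+s(1-s)\,\sigma_1(s),
\]
with \(\sigma_0\) a sum of squares of degree at most \(4\) and \(\sigma_1\) a sum of squares of degree at most \(2\). Represent \(\sigma_0=m_2^\top Qm_2\) with \(m_2=(1,s,s^2)\) and \(Q\in\mathbb S^3_+\), and \(\sigma_1=m_1^\top Hm_1\) with \(m_1=(1,s)\) and \(H\in\mathbb S^2_+\). Expanding,
\[
s(1-s)\sigma_1=H_{00}s+(2H_{01}-H_{00})s^2+(H_{11}-2H_{01})s^3-H_{11}s^4 .
\]
Adding the coefficients of \(\sigma_0\) in the monomial basis and matching powers \(s^0,\dots,s^4\) reproduces the five linear equations \eqref{eq:quartic-coefficients}. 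Sufficiency of the certificate is immediate; necessity is the classical Luk\'acs theorem. This citation is the one nonelementary input, and I expect it to be the main point requiring care: one must check that the degree bounds match the \(3\times3\) and \(2\times2\) blocks, including the degenerate case \(\mu=0\), where the degree may drop. That case is excluded anyway because \(\mu\ge\underline\mu>0\).

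\emph{Step 3: relaxed auxiliary variables.} The map \(\tau\mapsto p_{\mu,\tau}(s)\) is nonincreasing on \([0,1]\), since the coefficient of \(\tau\) is \(-2s-2s^3\le0\). The map \(\mu\mapsto p_{\mu,\tau}(s)\) is nondecreasing, since the coefficient of \(\mu\) is \(1-s^4\ge0\). The constraints \eqref{eq:radius-auxiliary} say exactly \(\underline\mu\le\mu\le\mu_z\) and \(\tau\ge\tau_z\). Hence, if feasible \((\mu,\tau,Q,H)\) exist, then \(p_{\mu_z,\tau_z}\ge p_{\mu,\tau}\ge0\) on \([0,1]\), so \(R\ge R_\sharp(z)\). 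Conversely, if \(R\ge R_\sharp(z)\), take \(\mu=\mu_z\) and \(\tau=\tau_z\); this choice is admissible because \(z\in\mathcal D_{\underline\mu}\). Step~2 then supplies \(Q\) and \(H\).

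\emph{Step 4: conic form of \eqref{eq:center-design}.} In \eqref{eq:radius-auxiliary}, \(\norm{b+\gamma z}\le\tau\) is a second-order cone constraint. The constraint \(\mu+\norm{B^\top z}_\infty\le\lambda\) is a finite family of linear inequalities \(\mu\pm B_j^\top z\le\lambda\). The remaining constraints of \eqref{eq:center-design} are either second-order cone constraints, namely \(\norm{(x^0,y^0-z)}\le R\), or linear inequalities in \(z\), \(R\), \(\eps\) after splitting each absolute value \(|B_i^\top z|\) into two inequalities. Together with the linear equations \eqref{eq:quartic-coefficients} in \((Q,H,\mu,\tau,R)\) and the two semidefinite cone memberships, this yields the claimed exact finite formulation.
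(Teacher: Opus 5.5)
Your proposal is correct and follows the paper's proof essentially step for step. It uses the same half-angle substitution to reach the quartic \(\mu(1-r^4)+4\gamma Rr^2-2\tau(r+r^3)\), the same Luk\'acs-type representation on \([0,1]\) with coefficient matching (the paper cites Parrilo's Theorem~3.72), and the same monotonicity argument that relaxing to \(\mu\le\mu_z\), \(\tau\ge\tau_z\) leaves the projected feasible set unchanged, with your added remarks on degree bounds and the linear rewriting of the \(\ell^\infty\) and absolute-value constraints only making explicit what the paper leaves implicit.
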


\begin{proof}
\emph{Step 1: scalar polynomial.}
The substitution \(u=2r/(1+r^2)\) maps \([0,1]\) onto
\([0,1]\), with
\(\sqrt{1-u^2}=(1-r^2)/(1+r^2)\).
Multiplying
\(\mu\sqrt{1-u^2}+\gamma Ru^2-\tau u\ge0\)
by \((1+r^2)^2>0\) gives
\begin{equation}\label{eq:invariance-quartic}
p(r)=\mu(1-r^4)+4\gamma Rr^2-2\tau(r+r^3)\ge0
\quad(0\le r\le1).
\end{equation}

\emph{Step 2: interval positivity.}
The classical representation
\cite[Theorem~3.72]{Parrilo} makes quartic nonnegativity
on \([0,1]\) equivalent to
\[
p(r)=v_2(r)^\top Qv_2(r)
+r(1-r)v_1(r)^\top Hv_1(r),
\quad v_2=(1,r,r^2)^\top,\quad v_1=(1,r)^\top,
\]
with \(Q\succeq0\) and \(H\succeq0\).
Expansion gives \eqref{eq:quartic-coefficients}.

\emph{Step 3: exact projection.}
If \(R\ge R_\sharp(z)\), choose
\(\mu=\mu_z\) and \(\tau=\tau_z\).
The preceding steps give the required representation.
Conversely, \eqref{eq:radius-auxiliary} gives
\(\mu\le\mu_z\) and \(\tau\ge\tau_z\).
Replacing the auxiliary values by the actual values increases
the scalar left-hand side, so \eqref{eq:convex-invariance} holds.
The formulation is therefore exact after projection onto \((z,R)\).
The remaining norm constraints are conic, and the coefficient
equations are affine.
\end{proof}

Maximizing a common margin need not minimize the deadline
when the initial coordinate magnitudes differ.
We therefore optimize the reciprocal deadline directly.

\begin{corollary}[Inverse-deadline center selection]
\label{cor:optimal-deadline-design}
Assume \(\norm{x_I^0}_\infty>0\).
Retain the feasibility, containment, and invariance constraints
in \eqref{eq:center-design}, and maximize \(\beta\ge0\) subject to
\begin{equation}\label{eq:inverse-deadline-design}
|B_i^\top z|+\norm{B_i}R+\beta|x_i^0|\le\lambda
\quad(i\in I).
\end{equation}
This problem is convex and has the same finite conic formulation.
It may be infeasible; when feasible, it attains its maximum.
Every feasible \(\beta>0\) certifies permanent zeros on \(I\)
by time \(t_0+1/\beta\), or within
\(\lceil1/(s\beta)\rceil\) exact resolvent steps.
\end{corollary}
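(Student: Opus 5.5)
The plan is to mirror Proposition~\ref{prop:convex-center-design}, replacing the common margin \(\eps\) by the inverse-deadline variable \(\beta\). Convexity comes first. The feasibility set \(\mathcal D_{\underline\mu}\), the containment \(\norm{(x^0,y^0-z)}\le R\), and the invariance constraint \(R\ge R_\sharp(z)\) (through the family \eqref{eq:convex-invariance}) are closed and convex in \((z,R)\), as shown there. Each new constraint \eqref{eq:inverse-deadline-design} is jointly convex in \((z,R,\beta)\). The reason is that \(|B_i^\top z|\) is convex, \(\norm{B_i}R\) is linear, and \(\beta|x_i^0|\) is linear because \(|x_i^0|\) is data. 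Together with \(\beta\ge0\) and a linear objective, the problem is convex. For the finite conic formulation, I would invoke Proposition~\ref{prop:finite-center-sdp} verbatim. It replaces \(R\ge R_\sharp(z)\) by the auxiliary variables \(\mu,\tau,Q,H\), and this replacement is exact after projection onto \((z,R)\). The inequalities \eqref{eq:inverse-deadline-design} are second-order-cone representable after splitting \(|B_i^\top z|\) into two linear inequalities.

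Next, attainment when the problem is feasible. Since \(\norm{x_I^0}_\infty>0\), some \(j\in I\) has \(|x_j^0|>0\), so \(\beta\le\lambda/|x_j^0|\) on the feasible set. This bounds the objective. To get compactness I would bound \((z,R)\) on a superlevel set \(\beta\ge\beta_0\), where \(\beta_0\) is attained by some feasible point. If some \(B_i\ne0\) with \(i\in I\), then \(R\le\lambda/\norm{B_i}\), and containment bounds \(z\). If every selected column vanishes, the constraints on \((z,R)\) decouple from \(\beta\). Then \(\beta\le\min_{i:x_i^0\ne0}\lambda/|x_i^0|\) is attained by any feasible \((z,R)\), for instance the baseline \(z=0\) with large \(R\).

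I expect this degenerate case to be the main obstacle. In the general case, the feasible \((z,R)\) set may be unbounded, so a maximizing sequence in \(\beta\) need not converge. I would therefore handle it by projecting onto \(\beta\). The feasible \(\beta\) form a closed set because the constraint set is closed. In the nondegenerate case, closedness follows from compactness of the superlevel set. In the degenerate case, the maximum is explicit.

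The certificate claim reduces to Theorem~\ref{thm:ball}. Take a feasible point with \(\beta>0\). Containment and \(R\ge R_\sharp(z)\) make \(R\) a valid radius there, since Theorem~\ref{thm:ball}'s radius is the maximum of these two quantities and any larger \(R\) works as well, because Theorem~\ref{lem:radial} gives invariance for every \(R\ge R_\sharp(z)\). Also \(z\in\mathcal D_{\underline\mu}\) is strictly dual feasible. Then \eqref{eq:inverse-deadline-design} gives \(\eps_{i,z}\ge\beta|x_i^0|\). For indices with \(x_i^0\ne0\), this yields \(\eps_{i,z}>0\) and \(|x_i^0|/\eps_{i,z}\le1/\beta\).

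For indices with \(x_i^0=0\), the margin \(\eps_{i,z}\) may be zero, which is a subtle point. I would handle it by noting that the shrinkage inequality \(|x_i(t)|\le(|x_i^0|-\eps_{i,z}(t-t_0))_+\) remains valid for \(\eps_{i,z}\ge0\). It gives \(x_i\equiv0\) from \(x_i^0=0\) when \(|B_i^\top y(t)|\le\lambda\), using the flow's zero-preservation under \(\xi_i\)-absorption. The discrete analogue follows from \eqref{soft}. If this step fails, I would instead state the certificate for the subset of \(I\) with nonzero entries. Finally, the maximum of the individual deadlines is at most \(1/\beta\) for the flow, and at most \(\lceil1/(s\beta)\rceil\) steps for the exact resolvent.
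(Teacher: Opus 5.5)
Your proposal is correct and follows essentially the paper's route. The steps match: convexity because \(|x_i^0|\) is data; the same conic representation of \(R\ge R_\sharp(z)\); compactness of the feasible set when some selected column is nonzero, with the explicit maximum \(\lambda/\norm{x_I^0}_\infty\) otherwise; the bound \(\eps_{i,z}\ge\beta|x_i^0|\) for initially nonzero coordinates; and zero-preservation under a nonnegative margin for initially zero ones.

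Two small points. First, closedness of the constraint set alone would not make the set of feasible \(\beta\) closed, since projections of closed sets need not be closed; it is your compactness argument that actually carries attainment. Second, the fallback to a subset of \(I\) is unnecessary. The zero-preservation argument is valid: a nonzero interval cannot start from a zero when \(d|x_i|/dt\le0\) almost everywhere, and \eqref{soft} keeps a zero iterate.
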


\begin{proof}
\emph{Step 1: deadlines and initially zero coordinates.}
For \(x_i^0\ne0\), the margin is at least
\(\beta|x_i^0|>0\), giving a deadline of at most \(1/\beta\).
For \(x_i^0=0\), the constraint gives
\(|B_i^\top y|\le\lambda\) throughout the enclosing ball.
On any nonzero interval, \(d|x_i|/dt\le0\) almost everywhere,
so such an interval cannot begin at zero.
The soft-threshold inequality likewise preserves a zero iterate.

\emph{Step 2: convexity and attainment.}
The new constraints are convex because \(x_i^0\) is fixed.
If some selected \(B_j\ne0\), feasibility gives
\(R\le\lambda/\norm{B_j}\), and containment bounds \(z\).
An initially nonzero selected coordinate gives
\(\beta\le\lambda/|x_i^0|\).
The feasible set is therefore compact, so the maximum is attained.
If all selected columns vanish, the attained maximum is
\(\lambda/\norm{x_I^0}_\infty\).
\end{proof}

\begin{remark}[Scope and verification of center selection]
\label{rem:center-synthesis-scope}
Optimality in \eqref{eq:center-design} is relative to
the prescribed lower bound \(\underline\mu\).
A nonpositive optimum does not exclude successful centers
closer to the dual boundary.
If \(x_I^0=0\), check permanence using nonnegative margins
directly; the inverse-deadline objective is unbounded whenever
its constraints are feasible.

For a fixed known dual-residual budget \(E_{\rm err}\),
replace \(R\) by \(R+E_{\rm err}\) only in the coordinate
constraints.
Convexity and the residual certificate are preserved.
Containment and invariance still concern the exact ball
of radius \(R\).

After a numerical solve, verify \(\mu_z\ge\underline\mu\)
and recompute \(\tau_z,R_\sharp(z)\) independently.
Set
\(R_{\rm check}=\max\{\norm{(x^0,y^0-z)},R_\sharp(z)\}\)
and recompute the margins.
For inverse-deadline output, also check
\(\eps_i\ge\beta|x_i^0|\) using this radius and any declared
residual inflation.
Exactness of the conic formulation does not certify
floating-point output.
Rigorous certification requires outward error bounds
or exact verification.
\end{remark}

The attainable margins have an intrinsic limitation.
The next bound holds for every center in this family,
without a feasibility floor or rank assumption.

\begin{proposition}[All-center margin obstruction]
\label{prop:all-center-obstruction}
For every strictly dual-feasible \(z\), every
\(R\ge R_\sharp(z)\), and every coordinate \(i\),
\begin{equation}\label{eq:all-center-obstruction}
\lambda-|B_i^\top z|-\norm{B_i}R
\le\lambda-\frac{|B_i^\top b|}{\gamma}.
\end{equation}
If \(|B_i^\top b|\ge\gamma\lambda\), no ball in this family
has a positive margin for coordinate \(i\), at any observed state.
\end{proposition}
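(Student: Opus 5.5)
We need to show \(|B_i^\top z|+\norm{B_i}R\ge |B_i^\top b|/\gamma\). Since \(R\ge R_\sharp(z)\ge\tau_z/\gamma\) (take \(t=\tau_z/\gamma\) in \eqref{Rsharp}, where the second term vanishes), it suffices to prove
\[
|B_i^\top z|+\norm{B_i}\frac{\norm{b+\gamma z}}{\gamma}\ge\frac{|B_i^\top b|}{\gamma}.
\]
Multiplying by \(\gamma\), this is \(|\gamma B_i^\top z|+\norm{B_i}\norm{b+\gamma z}\ge|B_i^\top b|\). It follows from the triangle inequality and Cauchy--Schwarz:
\[
|B_i^\top b|=|B_i^\top(b+\gamma z)-\gamma B_i^\top z|
\le\norm{B_i}\norm{b+\gamma z}+\gamma|B_i^\top z|.
\]
Rearranging gives \eqref{eq:all-center-obstruction}.

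The second claim is then immediate. Suppose \(|B_i^\top b|\ge\gamma\lambda\). The right side of \eqref{eq:all-center-obstruction} is nonpositive, so the margin \(\eps_{i,z}\) in \eqref{ballmargin} is nonpositive for every admissible center and radius. The radius actually used in Theorem~\ref{thm:ball} is at least \(R_\sharp(z)\) whatever the observed state, so the bound applies regardless of \((x^0,y^0)\).

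The only step that needs care is the lower bound \(R_\sharp(z)\ge\tau_z/\gamma\). It is the endpoint evaluation of \eqref{Rsharp}. When \(\tau_z=0\), the inequality \(R_\sharp\ge0=\tau_z/\gamma\) holds trivially. Strict dual feasibility is needed only so that \(\mu_z>0\) and \(R_\sharp(z)\) is defined. No rank assumption enters.
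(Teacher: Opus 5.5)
Your proposal is correct and follows essentially the same route as the paper. It uses the endpoint \(t=\tau_z/\gamma\) of \eqref{Rsharp} to get \(R\ge\tau_z/\gamma\), then the triangle inequality together with Cauchy--Schwarz to obtain \(|B_i^\top z|+\norm{B_i}R\ge|B_i^\top b|/\gamma\), and from this the nonpositive-margin conclusion follows immediately.
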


\begin{proof}
The endpoint in \eqref{Rsharp} gives \(R\ge\tau_z/\gamma\).
The triangle and Cauchy--Schwarz inequalities then yield
\[
|B_i^\top z|+\norm{B_i}R
\ge |B_i^\top z|+\frac{|B_i^\top(b+\gamma z)|}{\gamma}
\ge\frac{|B_i^\top b|}{\gamma}.
\]
Subtracting from \(\lambda\) proves the bound.
\end{proof}

This obstruction concerns positive margins from the specified
balls. It does not rule out identification by the dynamics,
and no converse is asserted.

\begin{proposition}[Zero-center obstruction for equal column norms]
\label{prop:obstruction}
Assume \(\norm{B_i}=c>0\) for every \(i\).
For \(z=0\), if the margin \eqref{ballmargin} is positive
for even one coordinate, then \(x=0\) is the unique minimizer
of \eqref{primal}.
Equivalently, if the problem has a nonzero minimizer,
the zero-center test cannot certify any coordinate
at any initial state.
\end{proposition}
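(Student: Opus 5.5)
With \(z=0\) we have \(\mu_0=\lambda\), \(\tau_0=\norm b\), and \(R\ge R_\sharp(0)\ge\tau_0/\gamma=\norm b/\gamma\) by the endpoint \(t=\tau_z/\gamma\) in \eqref{Rsharp}. The plan is to turn a positive margin for one coordinate \(i\) into the dual-feasibility condition that characterizes \(x=0\) as a minimizer. Strictness of that condition will then give uniqueness.

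\emph{Step 1.} Suppose \(\eps_{i,0}=\lambda-cR>0\) for some \(i\). Then
\[
c\norm b/\gamma\le cR<\lambda .
\]
For every column \(j\), Cauchy--Schwarz and the equal-norm assumption give
\[
|B_j^\top b|/\gamma\le c\norm b/\gamma<\lambda .
\]
Hence \(\norm{B^\top b}_\infty<\gamma\lambda\). This uses the same chain of inequalities as Proposition~\ref{prop:all-center-obstruction}. The key point is that the margin test at \(z=0\) depends on \(i\) only through \(\norm{B_i}=c\), so one certified coordinate transfers to all of them.

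\emph{Step 2.} Take \(x_*=0\) and \(y_*=-b/\gamma\), as \eqref{kkt} requires. Then \(\xi_*=B^\top b/\gamma\), and Step~1 gives \(\norm{\xi_*}_\infty<\lambda\), so \(\xi_*\in\partial g(0)\). Thus \(0\) is a minimizer of \eqref{primal}, and \(y_*\) is the unique dual optimum.

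\emph{Step 3: uniqueness.} Every minimizer \(x_*'\) has the same \(\xi_*\). From \(\xi_*\in\partial g(x_*')\) and \(|\xi_{*,j}|<\lambda\) for all \(j\), every coordinate of \(x_*'\) must vanish, so \(x_*'=0\). Alternatively, apply the first step of the proof of Theorem~\ref{thm:semiglobal} with \(I=\{1,\ldots,p\}\) and \(S=\varnothing\).

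The equivalent formulation is the contrapositive: if a nonzero minimizer exists, no coordinate can be certified at any initial state, since the argument never used \(x^0\) or \(y^0\). I expect no serious obstacle. The only care needed is the strict inequality in Step~1, which is what forces the dual margin to be strict and hence gives uniqueness.
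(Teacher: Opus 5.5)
Your proof is correct, but its second half takes a different route from the paper's. Your Step~1 is the same as the paper's: the endpoint \(t=\norm b/\gamma\) gives \(R\ge\norm b/\gamma\). With equal column norms, one positive margin then yields \(\norm{B^\top b}_\infty/\gamma\le c\norm b/\gamma\le cR<\lambda\).

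The two arguments differ in how they conclude. The paper does not use the optimality system. It expands the objective directly:
\[
P(x)-P(0)=\lambda\norm x_1-\frac{\ip{B^\top b}{x}}{\gamma}+\frac{\norm{Bx}^2}{2\gamma}
\ge\Bigl(\lambda-\frac{\norm{B^\top b}_\infty}{\gamma}\Bigr)\norm x_1>0
\quad(x\ne0).
\]
This single inequality proves that \(0\) is a minimizer and that it is unique. It also gives a quantitative sharp-minimum growth bound.

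You instead verify the KKT relations at \((0,-b/\gamma)\). You then use uniqueness of the dual optimum and the strict dual margin to force every minimizer to vanish. This is the screening mechanism behind Step~1 of Theorem~\ref{thm:semiglobal}, so your route links the obstruction to the paper's certificate machinery. It does rely on sufficiency of the KKT relations, which the preliminaries state only in the necessary direction. That fact is standard, and it also follows within the paper from weak duality: \(-b/\gamma\in\mathcal D\), and \(P(0)=\norm b^2/(2\gamma)=D(-b/\gamma)\), so \(0\) attains the minimum.
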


\begin{proof}
\emph{Step 1: strict optimality margin.}
At \(z=0\), the endpoint \(t=\norm b/\gamma\)
in \eqref{Rsharp} gives
\(R\ge R_\sharp(0)\ge\norm b/\gamma\).
All coordinate margins equal \(\lambda-cR\).
If one is positive, then
\[
\frac{\norm{B^\top b}_\infty}{\gamma}
\le\frac{c\norm b}{\gamma}
\le cR<\lambda.
\]

\emph{Step 2: uniqueness.}
For every \(x\ne0\),
\[
P(x)-P(0)
=\lambda\norm x_1-\frac{\ip{B^\top b}{x}}{\gamma}
+\frac{\norm{Bx}^2}{2\gamma}
\ge\left(\lambda-
\frac{\norm{B^\top b}_\infty}{\gamma}\right)\norm x_1>0.
\]
This proves uniqueness and the contrapositive.
\end{proof}

The next example changes only the certificate center,
keeping the initial state and trajectory fixed.
The improved margin attains the all-center bound.

\begin{proposition}[An optimal center for a normalized correlated design]
\label{prop:centerexample}
Let \(m=p=2\), \(\lambda=\gamma=1\), and
\[
B=\begin{pmatrix}1&4/5\\0&3/5\end{pmatrix},
\quad b=(11/10,0),\quad
x^0=(0,1/100),\quad y^0=(-7/10,0).
\]
Both columns have norm one and are correlated.
The unique primal optimum is \(x_*=(1/10,0)\ne0\),
and the dual optimum is \(y_*=(-1,0)\).
The center \(z_{\rm base}=(-7/10,0)\) gives
\(R=2/5\), \(\eps_{2,z_{\rm base}}=1/25\), and deadline \(1/4\).
The center \(z_{\rm opt}=(-4/5,9/40)\) gives
\(R=3/8\), \(\eps_{2,z_{\rm opt}}=3/25\), and deadline \(1/12\).
The latter margin is maximal among all strictly feasible
centers in this invariant-ball family whose balls contain
the observed state.
For \(s=1/100\), the respective certified resolvent deadlines
are \(25\) and \(9\) steps.
The zero-center test fails on the same data.
\end{proposition}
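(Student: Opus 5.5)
The plan is direct verification, organized around the results already established in Sections~\ref{sec:preliminaries} and~\ref{sec:invariant-balls}. The column norms are immediate: \(B_1=(1,0)\) and \(B_2=(4/5,3/5)\) are unit vectors with \(\ip{B_1}{B_2}=4/5\ne0\).

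\emph{Step 1: the optimal pair.} I will verify \eqref{kkt} at \(x_*=(1/10,0)\). Here \(Bx_*-b=(-1,0)\), so \(y_*=(-1,0)\) and \(\xi_*=-B^\top y_*=(1,4/5)\). This lies in \(\partial g(x_*)\), because \(\xi_{*,1}=\lambda\) with \(x_{*,1}>0\) and \(|\xi_{*,2}|=4/5<1\). For uniqueness, coordinate \(2\) has the strict margin \(\eta=1/5\), and \(B_{\{1\}}\) is injective. Step~1 of the proof of Theorem~\ref{thm:semiglobal} then gives uniqueness of the primal minimizer, while \(y_*\) is unique in any case.

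\emph{Step 2: the two centers.} For each center I compute \(\mu_z\), \(\tau_z\), \(R_\sharp(z)\), the state distance, and \(\eps_{2,z}\). I will then read off the deadlines from Theorem~\ref{thm:ball}.
\begin{itemize}
\item For \(z_{\rm base}\): \(B^\top z=(-7/10,-14/25)\), so \(\mu_z=3/10\). Also \(b+\gamma z=(2/5,0)\), so \(\tau_z=2/5\).
\item For \(z_{\rm opt}\): \(B^\top z=(-4/5,-101/200)\), so \(\mu_z=1/5\). Also \(b+\gamma z=(3/10,9/40)\), so \(\tau_z=3/8\).
\end{itemize}
In both cases \(\tau_z^2\le c_\sharp\mu_z^2\), since \(c_\sharp>11\). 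Lemma~\ref{lem:closed} therefore gives \(R_\sharp(z)=\tau_z\), that is, \(2/5\) and \(3/8\).

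The state distances are \(1/100\) for \(z_{\rm base}\) and \(\sqrt{0.060725}<3/8\) for \(z_{\rm opt}\). Hence \(R=R_\sharp(z)\) in both cases. The margins are then:
\begin{itemize}
\item \(\eps_{2,z_{\rm base}}=1-14/25-2/5=1/25\);
\item \(\eps_{2,z_{\rm opt}}=1-101/200-3/8=3/25\).
\end{itemize}
With \(|x^0_2|=1/100\), the continuous deadlines are \(1/4\) and \(1/12\). The resolvent step counts are \(\lceil 25\rceil=25\) and \(\lceil 25/3\rceil=9\).

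\emph{Step 3: optimality and the zero center.} Maximality of the margin follows from Proposition~\ref{prop:all-center-obstruction} with \(i=2\). Its right-hand side is \(1-|B_2^\top b|/\gamma=1-22/25=3/25\), and \(z_{\rm opt}\) attains this value. This requires no containment condition, so it holds a fortiori among the balls containing the state. The failure of the zero-center test follows from Proposition~\ref{prop:obstruction}: the columns have equal norm \(c=1\) and the minimizer \(x_*\) is nonzero.

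The only delicate point is Step~2. There I must confirm that the endpoint regime of Lemma~\ref{lem:closed} applies, so that \(R_\sharp=\tau_z/\gamma\) exactly rather than an interior maximum. I must also check that the state containment for \(z_{\rm opt}\) is strict, so that \(R\) is not enlarged.
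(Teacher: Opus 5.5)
Your proposal is correct and follows the paper's proof essentially step for step. The steps match: the KKT check, Lemma~\ref{lem:closed} in its endpoint regime (the ratios $\tau_z^2/\mu_z^2=16/9$ and $225/64$ are well below $c_\sharp$), strict containment $\sqrt{2429}/200<3/8$, Theorem~\ref{thm:ball} for the deadlines, and Proposition~\ref{prop:all-center-obstruction} with $B_2^\top b=22/25$ for optimality. The only differences are minor and both valid: the paper gets uniqueness directly from invertibility of $B$ rather than via Step~1 of Theorem~\ref{thm:semiglobal}, and it shows the zero-center failure by the direct bound $R_\sharp(0)\ge\norm b=11/10$ (margin at most $-1/10$) rather than by invoking Proposition~\ref{prop:obstruction}.
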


\begin{proof}
\emph{Step 1: the equilibrium.}
The KKT relations hold:
\(Bx_*-b=(-1,0)=y_*\) and
\(-B^\top y_*=(1,4/5)\in\partial\norm{x_*}_1\).
Since \(B\) is invertible, the quadratic part of \(P\)
is strictly convex, so \(x_*\) is unique.

\emph{Step 2: the baseline center.}
At \(z_{\rm base}\), \(\mu_z=3/10\) and \(\tau_z=2/5\).
Lemma~\ref{lem:closed} gives \(R_\sharp=2/5\).
The observed radius is \(1/100\), so \(R=2/5\).
Since \(|B_2^\top z_{\rm base}|=14/25\), the margin is
\(1-14/25-2/5=1/25\), giving deadline \(1/4\).

\emph{Step 3: the optimal center.}
At \(z_{\rm opt}\),
\[
B^\top z_{\rm opt}=(-4/5,-101/200),\qquad
\mu_{z_{\rm opt}}=1/5,\qquad \tau_{z_{\rm opt}}=3/8.
\]
Lemma~\ref{lem:closed} gives \(R_\sharp=3/8\), while
\(\norm{(x^0,y^0-z_{\rm opt})}=\sqrt{2429}/200<3/8\).
Thus \(R=3/8\), and
\[
\eps_{2,z_{\rm opt}}=1-\frac{101}{200}-\frac38=\frac3{25},
\qquad
\frac{|x_2^0|}{\eps_{2,z_{\rm opt}}}=\frac1{12}.
\]
Since \(B_2^\top b=22/25\),
Proposition~\ref{prop:all-center-obstruction} bounds every
such margin by \(1-22/25=3/25\).
This proves optimality.
The ceilings at \(s=1/100\) give \(25\) and \(9\) steps.

\emph{Step 4: the zero center.}
For \(z=0\), \(R_\sharp(0)\ge\norm b=11/10\),
so the normalized-column margin is at most \(-1/10\).
\end{proof}

\begin{remark}[Meaning of the examples]\label{rem:center-examples}
For equal-norm designs with a nonzero minimizer,
Proposition~\ref{prop:obstruction} rules out the zero-center test.
Proposition~\ref{prop:centerexample} shows that another center
can overcome this failure, even with correlated columns.
For coordinate \(2\), maximizing the margin also minimizes
the certified deadline within this ball family.
The value \(1/12\) is an upper bound on the hitting time;
the coordinate may reach zero earlier.
The examples show that useful certificates can exist,
but do not measure how often they occur on high-dimensional
measured data.
\end{remark}
\section{Numerical implementation and applications}\label{sec:numerics}
We compare identification deadlines with observed zeros in
numerical resolvent trajectories.
The experiments test center selection and the conservatism
of the bounds.
A weak-form oscillator regression then examines the permanence
criterion for a fitted zero.

\subsection{Semismooth Newton solution of the resolvent equation}

Semismooth Newton methods for \(\ell^1\)-regularized least squares
are established \cite[Section~3.2]{LiSunToh}.
For the full resolvent equation, simultaneous thresholds require
distinguishing the Clarke Jacobian from the matrix family used
by the Newton solver.

Eliminating the primal variable from \eqref{resolvent} gives
the equation for \(Y=y^{k+1}\):
\begin{equation}\label{F}
F(Y)=(1+s\gamma)Y-y^k
-s\big[B\soft_{s\lambda}(x^k-sB^\top Y)-b\big]=0.
\end{equation}
Away from the scalar thresholds, the ordinary Jacobian is
\begin{equation}\label{Jac}
JF(Y)=(1+s\gamma)I_m+
s^2B\operatorname{diag}(d_i)B^\top,\quad
d_i=\mathbf1_{\{|x_i^k-sB_i^\top Y|>s\lambda\}}.
\end{equation}

At simultaneous thresholds, independent choices
\(d_i\in[0,1]\) need not give a member of
\(\partial_C F(Y)\).
Put \(\mathcal J=\{i:B_i\ne0\}\), and let
\(\mathscr A(Y)\) contain the binary masks on \(\mathcal J\)
attained arbitrarily close to \(Y\) away from all nonconstant
threshold hyperplanes.
Such points exist because only finitely many proper
hyperplanes are excluded.
Set \(d_i=0\) for \(i\notin\mathcal J\).
Zero-column masks have no effect, even when their constant
arguments remain at threshold.
Since \(F\) is continuous and piecewise affine,
\begin{equation}\label{Clarke}
\partial_C F(Y)=
\co\left\{(1+s\gamma)I_m+
s^2B\operatorname{diag}(d)B^\top:
d\in\mathscr A(Y)\right\}.
\end{equation}

For Newton iteration, use
\(J(Y)=(1+s\gamma)I+s^2B\operatorname{diag}(d_i(Y))B^\top\),
with \(d_i=1\) strictly outside the thresholds,
\(d_i=0\) strictly inside, and \(d_i\in[0,1]\) at equality.
This family can strictly contain \(\partial_C F(Y)\),
but it satisfies
\[
\norm{F(Y+h)-F(Y)-J(Y+h)h}=o(\norm h).
\]
For a fixed \(Y\), the remainder is zero for sufficiently
small \(h\): each scalar increment lies on one affine branch
or starts at its endpoint.
An argument at threshold at both endpoints has zero increment.
The script chooses \(d_i=0\) at equality.

Positive definiteness ensures an invertible Newton matrix,
but not descent of the residual norm.
We therefore use the potential
\begin{equation}\label{potential}
\Phi(Y)=\frac{1+s\gamma}{2}\norm Y^2
-\ip{y^k-sb}{Y}
+\frac12\norm{\soft_{s\lambda}(x^k-sB^\top Y)}^2.
\end{equation}
The scalar function
\(w\mapsto\soft_{s\lambda}(w)^2/2\)
has derivative \(\soft_{s\lambda}(w)\), so
\(\nabla\Phi(Y)=F(Y)\).
The potential is \((1+s\gamma)\)-strongly convex
and has a globally Lipschitz gradient.
For any
\(J=(1+s\gamma)I+s^2B\operatorname{diag}(d)B^\top\),
\(0\le d_i\le1\), the direction \(p=-J^{-1}F(Y)\) satisfies
\[
\ip{\nabla\Phi(Y)}p=-F(Y)^\top J^{-1}F(Y)<0
\quad\text{if }F(Y)\ne0.
\]
Thus Armijo backtracking on \(\Phi\) accepts a sufficiently
short step.
For a fixed resolvent problem, the bounds
\((1+s\gamma)I\preceq J\preceq
(1+s\gamma+s^2\norm B^2)I\)
also give uniform descent constants.

\begin{proposition}[Simultaneous thresholds are coupled]\label{prop:jac}
Let \(s=\lambda=\gamma=1\),
\(B=(1,1)\), \(x^k=(1,-1)\), and \(y^k=b=0\).
Then \(F(Y)=3Y\) near \(Y=0\), so
\(\partial_C F(0)=\{3\}\).
The independent choice \(d_1=d_2=0\) gives \(2\),
which is not in \(\partial_C F(0)\).
\end{proposition}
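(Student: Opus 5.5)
We compute \(F\) explicitly near \(Y=0\) and then evaluate the Clarke Jacobian through the formula \eqref{Clarke}. With \(s=\lambda=\gamma=1\), \(B=(1,1)\), \(x^k=(1,-1)\) and \(y^k=b=0\), equation \eqref{F} reads
\[
F(Y)=2Y-\bigl[\soft_1(1-Y)+\soft_1(-1-Y)\bigr],\qquad Y\in\R.
\]
We then split by the sign of \(Y\) near zero. For \(Y>0\) small, \(|1-Y|<1\) gives \(\soft_1(1-Y)=0\), while \(|-1-Y|>1\) gives \(\soft_1(-1-Y)=-Y\). For \(Y<0\) small, the roles swap: \(\soft_1(1-Y)=-Y\) and \(\soft_1(-1-Y)=0\). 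At \(Y=0\) both terms vanish. In every case the bracket equals \(-Y\), so \(F(Y)=3Y\) on a neighborhood of zero.

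Since \(F\) is affine with slope \(3\) near \(0\), it is continuously differentiable there, and \(\partial_C F(0)=\{3\}\). We can also read this from \eqref{Clarke}. Both columns are nonzero, and the two threshold hyperplanes coincide at \(Y=0\). The only masks attained off these hyperplanes near \(0\) are therefore \(d=(0,1)\) for \(Y>0\) and \(d=(1,0)\) for \(Y<0\), and each gives \(2+1=3\). The mask \((0,0)\) is never attained: the two arguments \(1-Y\) and \(-1-Y\) leave their thresholds in opposite directions, so exactly one coordinate is always active. This coupling is the point of the statement.

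Finally, the independent choice \(d_1=d_2=0\) in the Newton family gives \(J=(1+s\gamma)+s^2\cdot0=2\), and \(2\notin\{3\}\). There is no real obstacle in this proof. The only care needed is the sign bookkeeping for \(\soft_1\) on each side of zero, and checking that the coupled thresholds exclude the all-zero mask from \(\mathscr A(0)\).
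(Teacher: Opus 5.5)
Your proposal is correct and matches the paper's proof: you compute the two soft-thresholded components on each side of $Y=0$, find that their sum is $-Y$, and conclude $F(Y)=3Y$ locally, so $\partial_C F(0)=\{3\}$, while the mask $d=(0,0)$ gives $2$. Your additional check through \eqref{Clarke}, showing that only the masks $(0,1)$ and $(1,0)$ are attained near $0$, supplements the same argument rather than giving a different route.
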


\begin{proof}
For small positive \(Y\), only the second soft-thresholded
component is nonzero and equals \(-Y\).
For small negative \(Y\), only the first is nonzero
and equals \(-Y\).
Their sum is therefore \(-Y\) on both sides, giving
\(F(Y)=2Y-(-Y)=3Y\).
Hence \(\partial_C F(0)=\{3\}\).
\end{proof}

\begin{proposition}[Residual-norm backtracking can fail]\label{prop:armijo}
Let \(s=\lambda=\gamma=1\),
\[
B=\begin{pmatrix}2&6\\0&-6\end{pmatrix},
\quad x^k=(2,-1),\quad y^k=0,\quad b=(4,1).
\]
At \(Y=0\), the strict-active mask gives
\(d=(1,0)\), \(F(0)=(2,1)\), and
\(J=\operatorname{diag}(6,2)\).
For \(p=-J^{-1}F(0)=(-1/3,-1/2)\),
\(\norm{F(\alpha p)}>\norm{F(0)}\) for every \(\alpha>0\).
\end{proposition}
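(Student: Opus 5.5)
The plan is a direct computation along the ray \(Y=\alpha p\): show that the second soft-threshold argument, which sits exactly at threshold at \(Y=0\), crosses into its active branch for every \(\alpha>0\). The residual along the ray is then an explicit affine function of \(\alpha\), and its squared norm exceeds \(\norm{F(0)}^2\) by a positive polynomial in \(\alpha\).

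\emph{Step 1: the data at \(Y=0\).}
With \(s=\lambda=\gamma=1\), equation \eqref{F} reads \(F(Y)=2Y-B\soft_1(x^k-B^\top Y)+b\).
I will compute \(B^\top Y=(2Y_1,\,6Y_1-6Y_2)\), so the threshold argument at \(Y=0\) is \(w=(2,-1)\).
Then \(\soft_1(w)=(1,0)\) and \(B\soft_1(w)=(2,0)\), which gives \(F(0)=(2,1)\) and \(\norm{F(0)}^2=5\).
The first argument is strictly active and the second is exactly at threshold.
The script's convention \(d_i=0\) at equality therefore gives \(d=(1,0)\), and \eqref{Jac} gives \(J=2I+B\operatorname{diag}(1,0)B^\top=2I+\operatorname{diag}(4,0)=\operatorname{diag}(6,2)\).
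Hence \(p=-J^{-1}F(0)=(-1/3,-1/2)\).

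\emph{Step 2: the residual along the ray.}
For \(\alpha>0\), the threshold argument at \(\alpha p\) is \(w=(2+2\alpha/3,\,-1-\alpha)\).
Both components now lie strictly outside \([-1,1]\); this is the key point, since the Newton model assumed the second coordinate stays inactive.
Thus \(\soft_1(w)=(1+2\alpha/3,\,-\alpha)\) and \(B\soft_1(w)=(2-14\alpha/3,\,6\alpha)\).
Substituting gives
\[
F(\alpha p)=(-2\alpha/3,-\alpha)-(2-14\alpha/3,\,6\alpha)+(4,1)=(2+4\alpha,\,1-7\alpha).
\]

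\emph{Step 3: conclusion.}
Expanding, \(\norm{F(\alpha p)}^2=5+2\alpha+65\alpha^2>5=\norm{F(0)}^2\) for every \(\alpha>0\).
So no positive step decreases the residual norm, and residual-norm backtracking fails.

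The main obstacle is Step 2: we must check that the branch assignment is uniform in \(\alpha>0\), that is, that no coordinate re-enters \([-1,1]\) for large \(\alpha\). This holds because both components of \(w\) move monotonically away from the interval as \(\alpha\) grows.

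The computation also explains the failure. The actual directional derivative of \(F\) along \(p\) uses the mask \((1,1)\) rather than \(J\). The linear term \(2\alpha\) is then positive, which matches the descent guarantee holding only for the potential \(\Phi\) and not for \(\norm F\).
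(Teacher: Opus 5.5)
Your proof is correct and follows the paper's own argument: you verify that both threshold arguments along the ray stay strictly outside $[-1,1]$ (at $2+2\alpha/3$ and $-1-\alpha$), compute $F(\alpha p)=(2+4\alpha,\,1-7\alpha)$, and conclude from $\norm{F(\alpha p)}^2=5+2\alpha+65\alpha^2$. The paper also records $\ip{\nabla\Phi(0)}{p}=-7/6<0$ as a side remark, which you allude to but which is not needed for the statement.
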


\begin{proof}
Along the ray, the threshold arguments are
\(2+2\alpha/3>1\) and \(-1-\alpha<-1\).
Substitution gives \(F(\alpha p)=(2+4\alpha,1-7\alpha)\), so
\[
\norm{F(\alpha p)}^2=5+2\alpha+65\alpha^2
>5=\norm{F(0)}^2.
\]
The same direction decreases \(\Phi\) locally, since
\(\ip{\nabla\Phi(0)}p=-7/6<0\).
\end{proof}

\subsection{Certificate diagnostics}

The script \texttt{reproduce\_certificates.py} solves
\eqref{F} using the strict-active Newton selection and
Armijo backtracking on \(\Phi\).
The primal update uses soft thresholding with no additional
coordinate deletion.
The stopping tolerance is \(10^{-11}\) for the Euclidean norm
of the computed dual-equation residual.
Gap diagnostics use Theorem~\ref{thm:gapcert}.
Restricted-rank tests require \(B_{I^c}\) to have no more columns
than rows and least singular value greater than \(10^{-10}\).

All numerical states and margins below are floating-point
diagnostics.
Small residuals alone do not certify the trajectory:
Theorem~\ref{thm:inexact} requires a bound on the full future
residual sum, and rigorous margins require outward enclosures.
The experiments assess certificate timing and make no
solver-speed claim.
Stress tests use fixed reference-inactive sets rather than
an automatic candidate-set selection procedure.
A \emph{false nonzero} is a coordinate in such a set whose
magnitude exceeds the declared numerical-zero threshold.

\paragraph*{Normalized correlated analytic instance.}
For Proposition~\ref{prop:centerexample}, take \(s=0.01\).
The centers \(z_{\rm base}=(-0.7,0)\) and
\(z_{\rm opt}=(-0.8,9/40)\) give margins \(0.04\) and \(0.12\).
Their continuous deadlines are \(1/4\) and \(1/12\),
and their discrete deadlines are \(25\) and \(9\) steps.
Both concern the same initial state and trajectory.
Coordinate \(2\) first becomes zero numerically at step \(3\)
and remains zero through step \(60\).
The largest recorded algebraic residual is below
\(1.5\cdot10^{-16}\).

\begin{figure}[!htbp]
\centering
\includegraphics[width=\linewidth]{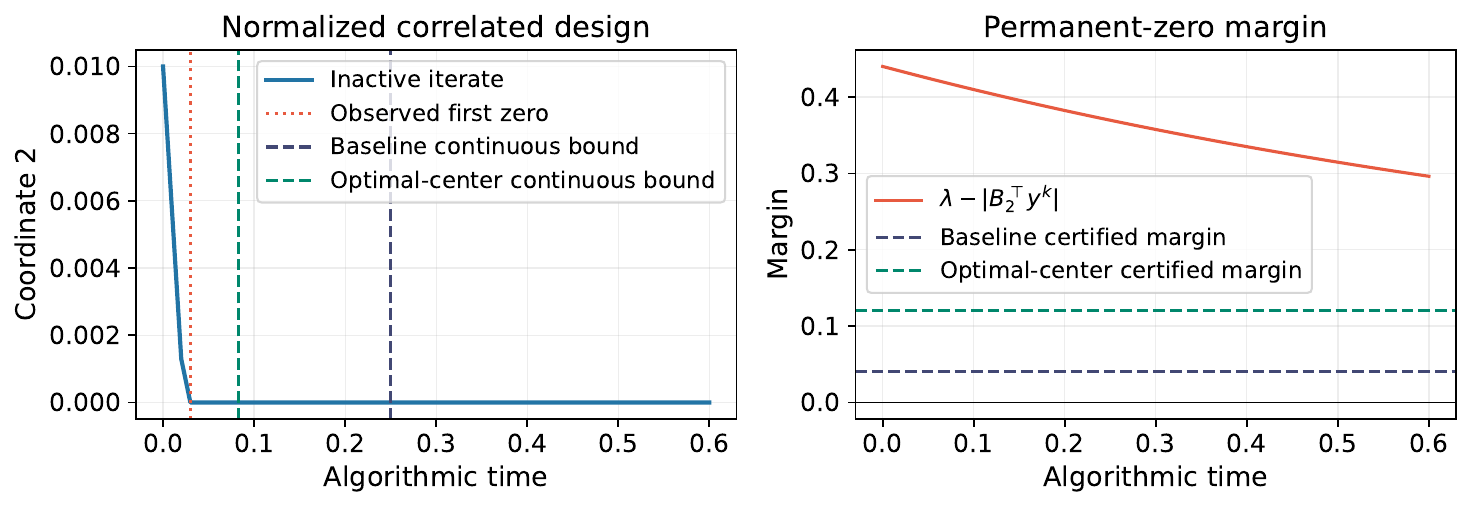}
\caption{Normalized correlated design, \(s=0.01\).
Left: observed zero and certified deadlines for the two centers.
Right: realized dual margin and initial uniform margins.}
\label{fig:normalized}
\end{figure}
\FloatBarrier

\paragraph*{Finite center-design validation.}
For the same state, \(I=\{2\}\), and
\(\underline\mu=0.2\), solve \eqref{eq:center-design}
using Proposition~\ref{prop:finite-center-sdp}.
CVXPY with Clarabel returns margin \(0.12000000\),
matching the exact bound \(3/25\).
The returned center is approximately
\((-0.518866,0.435851)\), with radius \(0.726418\).
Independent checks of \(\mu_z,\tau_z,R_\sharp(z)\)
and state containment reproduce the margin to the stated precision.
The coefficient-equation residual is below \(2\cdot10^{-13}\),
and the maximum computed conic violation is below
\(1.1\cdot10^{-11}\).
These are solver diagnostics; analytic optimality follows from
the obstruction and its attaining example.
Maximizing centers need not be unique, so the comparison concerns
margins and deadlines.

\paragraph*{Small-step consistency of the eventual bound.}
Fix the same observed state, \(z=(-0.7,0)\), \(I=\{2\}\),
and \(\delta=1/5\).
The optimizer margin is \(\eta_I\simeq0.0368871\),
and \(B_{I^c}\) has singular value \(1\).
The enclosure gives \(d\simeq0.577289\) and
\(\rho\simeq0.00218084\).
Table~\ref{tab:small-step} evaluates \eqref{discdeadline}:
\(sK_s\) approaches the continuous bound \(3445.294655\),
as in Remark~\ref{rem:small}.
These are deadline bounds, not simulated hitting times.
They remain much larger than the invariant-ball deadlines
at the same state.

\begin{table}[!htbp]
\centering
\caption{Eventual bounds for fixed state and parameters.
The limiting value of \(sK_s\) is \(3445.294655\).}
\label{tab:small-step}
\begin{tabular}{rrrr}
\hline
$s$ & $N_s$ & $K_s$ & $sK_s$\\
\hline
0.5 & 6893 & 6895 & 3447.500\\
0.1 & 34447 & 34457 & 3445.700\\
0.05 & 68890 & 68910 & 3445.500\\
0.01 & 344434 & 344534 & 3445.340\\
0.005 & 688863 & 689063 & 3445.315\\
\hline
\end{tabular}
\end{table}
\FloatBarrier

\paragraph*{Unequal-norm comparison.}
Take \(B=(1,0.3)\), \(b=2\), \(\lambda=\gamma=1\),
\(x^0=(2,1)\), and \(y^0=z=0\).
Then \(R=\sqrt5\), \(R_\sharp=2\), and the coordinate-\(2\)
margin is \(1-0.3\sqrt5\simeq0.329180\).
The continuous deadline is \(3.037855\).
At \(s=0.005\), the discrete deadline is \(608\) steps,
while the first numerical zero occurs at step \(205\).
Unequal column norms permit this zero-center success
without contradicting Proposition~\ref{prop:obstruction}.
The comparison appears in the supplementary figure.

\paragraph*{Underdetermined stress test.}
Using NumPy seed \(13\), generate a \(40\times60\) matrix
with independent standard normal entries divided by \(\sqrt{40}\).
Prescribe nonzero coordinates at zero-based positions
\(1,8,23,35,51\), with values \((1.3,-1.5,1,1.2,-1)\).
For \(\gamma=1\), \(\lambda=0.4\), and this support \(S\), set
\[
y_*=-\lambda B_S(B_S^\top B_S)^{-1}
\operatorname{sign}(x_{*,S}),\qquad
b=Bx_*-y_*.
\]
This gives a KKT pair with smallest unused-coordinate
strict margin \(0.03855565\) and least singular value
\(0.68901180\) for \(B_S\).
Using the same generator, draw \(x^0\) with independent normal
components of standard deviation \(0.15\).
Set \(y^0=0\) and \(s=0.05\).
Test the optimizer and orbit margins on all \(55\) excluded
coordinates every fifth iterate.

The last false nonzero occurs at optimization time \(4.75\).
The first sampled positive optimizer screen occurs at \(17.00\),
and the first positive uniform orbit margin at \(20.00\).
The maximum implicit residual through \(600\) steps is below
\(2.3\cdot10^{-16}\).
The constructed KKT pair supplies the reference-inactive set,
and the restricted-rank assumption holds for these \(55\) exclusions.
Both sufficient tests become positive after the observed zeros.

\begin{figure}[!htbp]
\centering
\includegraphics[width=\linewidth]{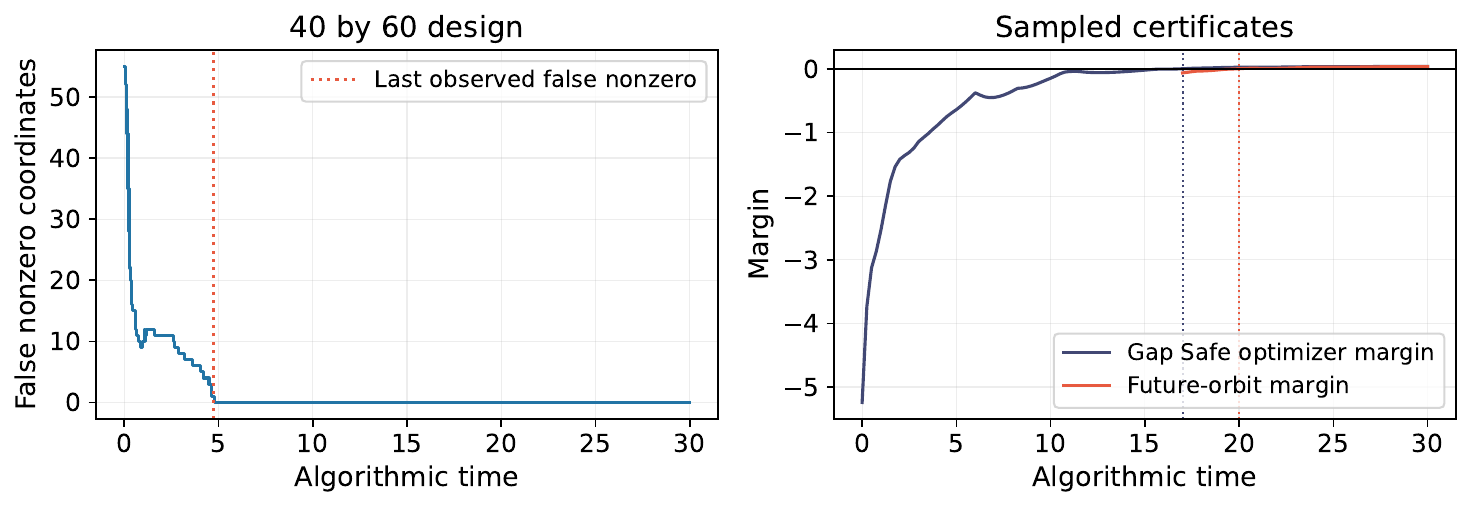}
\caption{Controlled \(40\times60\) design.
The false-nonzero count reaches zero before either sampled
whole-set certificate becomes positive.
The orbit margin is plotted only after the optimizer screen passes.}
\label{fig:underdetermined}
\end{figure}
\FloatBarrier

\paragraph*{Measured-data stress test.}
The optional script reads the public Wisconsin Diagnostic
table through scikit-learn \cite{Wisconsin}
for a sparse feature-selection test, not a clinical model.
Use a stratified \(75\%\) training split with seed \(2026\),
based on the original labels: benign \(=1\), malignant \(=0\).
After the split, map the fitting labels to malignant \(+1\)
and benign \(-1\).
Standardize features using training means and standard deviations,
and center the mapped training labels.
Set
\(B=X_{\rm train}/\sqrt{426}\),
\(b=(\ell-\bar\ell)/\sqrt{426}\),
\(\gamma=1\), and
\(\lambda=0.04\norm{B^\top b}_\infty\).
The columns have norm one up to rounding, and
\(\lambda=0.03124543\) for this split.
An independent high-accuracy reference Lasso fit gives
\(22\) numerically zero coefficients, defining the
reference-inactive set.

With \(x^0=y^0=0\) and \(s=1\), solve \(1000\) resolvent
equations numerically.
The last false nonzero occurs at step \(67\).
Testing every fifth step, the first all-\(22\) positive
optimizer screen occurs at step \(150\), and the first
positive future-orbit margin at step \(220\).
The largest recorded implicit residual is \(1.0\cdot10^{-11}\).
Again, the whole-set tests become positive after the observed zeros.

For \(20\) of the \(22\) reference-inactive coordinates,
the reproduced data satisfy \(|B_i^\top b|\ge\lambda\).
Proposition~\ref{prop:all-center-obstruction} therefore excludes
positive invariant-ball margins for every center and observed state.
The other two coordinates remain undecided by this condition.
The supplementary output also includes \(101\) feasible values
in the scan \(z=-\alpha b\), with best whole-set margin
approximately \(-7.47\).

\begin{figure}[!htbp]
\centering
\includegraphics[width=\linewidth]{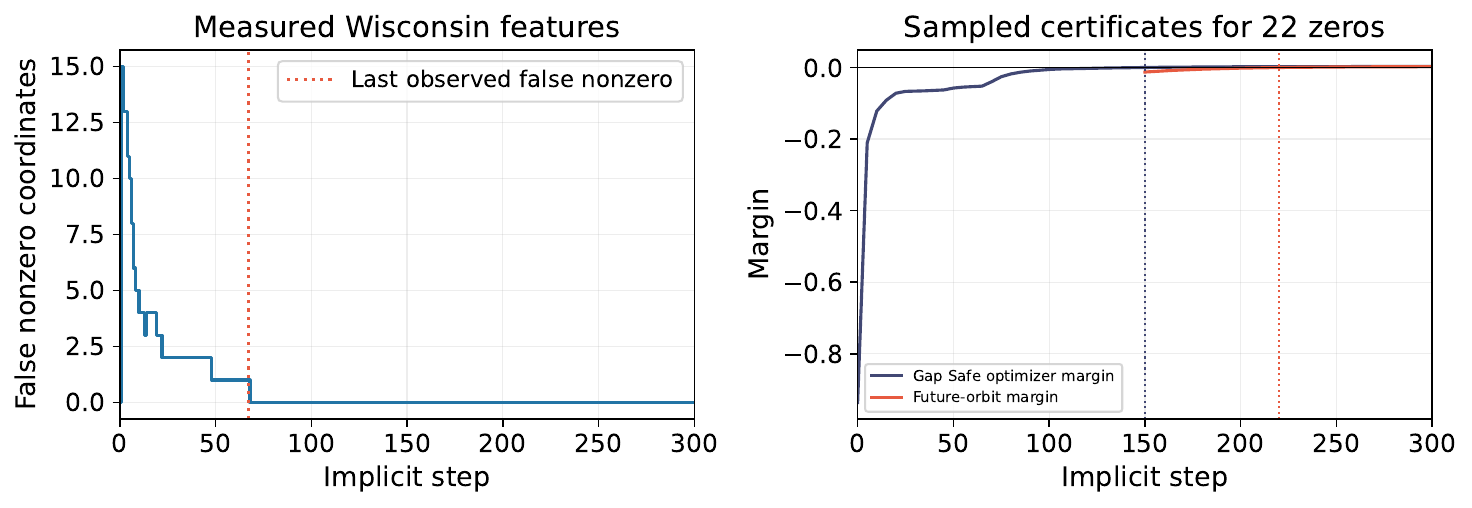}
\caption{Wisconsin training feature table:
false-nonzero count and sampled optimizer and future-orbit margins.
The forward margin is plotted only when the optimizer screen is positive.}
\label{fig:wisconsin}
\end{figure}
\FloatBarrier

\subsection{Weak-form regression for an oscillator}

We fit a sparse oscillator model from simulated noisy
displacement and velocity measurements.
A weak formulation avoids differentiating these samples.
The experiment examines a fitted zero and the criterion
for its subsequent permanence.
Physical time is distinct from optimization time or resolvent index.

Consider the driven oscillator
\begin{equation}\label{osc}
\dot q=v,\qquad
\dot v=c_1q+c_2v+c_3q^2+c_4q^3
+c_5v|v|+c_6u,
\end{equation}
with dictionary
\(\psi(q,v,u)=(q,v,q^2,q^3,v|v|,u)\).
Take compactly supported
\(\phi_j\in W^{2,1}([a_j,b_j])\) with
\(\phi_j=\dot\phi_j=0\) at both endpoints.
Assume \(q\in W^{2,1}\) on each interval and that the
right side of \eqref{osc} is integrable.
Two integrations by parts give
\begin{equation}\label{weak}
\sum_{i=1}^6c_i\int_{a_j}^{b_j}
\phi_j(t)\psi_i(q(t),v(t),u(t))\,dt
=\int_{a_j}^{b_j}\phi_j''(t)q(t)\,dt.
\end{equation}
Put \(t_j=(a_j+b_j)/2\), \(h_j=(b_j-a_j)/2>0\), and
\(s_j(t)=(t-t_j)/h_j\).
The windows \(\phi_j(t)=(1-s_j(t)^2)^3\) for
\(|s_j(t)|\le1\), extended by zero, are \(C^2\),
belong to \(W^{2,1}\), and satisfy the endpoint conditions.

Identity~\eqref{weak} holds for the underlying trajectory.
Quadrature using noisy measurements of both \(q\) and \(v\)
produces \(B^{\rm raw}\) and \(b\).
For \(n_i=\norm{B_i^{\rm raw}}\ne0\), set
\(B_i=B_i^{\rm raw}/n_i\) and \(x_i=n_i c_i\).
The objective \eqref{primal} then defines a regularized
estimator, with fitted physical coefficients \(c_i=x_i/n_i\).
Under noise or positive regularization, fitted zeros do not
certify the physical support of \eqref{osc}.
Weak sparse identification is established methodology
\cite{Messenger,Brunton}.

\paragraph*{Noisy weak-form oscillator.}
The script integrates \eqref{osc} over physical time \([0,25]\)
from \((q,v)=(0.1,0.2)\), with true coefficients
\((-1.4,-0.35,0,-0.8,0,1)\) and drive
\(u(t)=\sin(1.7t)+0.5\sin(2.9t+0.4)\).
Samples are taken every \(0.005\) units.
Independent normal noise of standard deviation \(0.002\)
is added to \(q,v\) using seed \(2026\).
Use \(86\) polynomial windows \((1-s_j^2)^3\)
of half-width \(0.45\), centered at
\(0.5+0.28j\), \(j=0,\ldots,85\).
The regression has normalized columns,
\(\lambda=0.025\), and \(\gamma=1\).

L-BFGS-B optimization on nonnegative split coefficients
gives fitted physical coefficients approximately
\[
(-1.40446,-0.31190,0,-0.81006,-0.00726,0.94811).
\]
Only the \(q^2\) coefficient is numerically zero,
with approximate dual strict margin \(0.02150\).
The physical \(v|v|\) coefficient is also zero,
but its fitted value is nonzero.

Let \(x^{\rm ref}\) be the reference fit in normalized coordinates.
Initialize \(x^0=x^{\rm ref}\), except that the \(q^2\)
coordinate is set to \(0.0002\), and take
\(y^0=(Bx^{\rm ref}-b)/\gamma\).
For \(s=0.01\), the initial computed optimizer margin is
\(0.01809\), while the future-orbit margin is \(-0.01145\).
After the first numerical resolvent step, the coefficient
is zero and the future-orbit margin is \(0.01742\).
Thus the permanence test becomes positive after the observed zero.
This deliberately near-optimal start examines that transition;
it does not assess routine initialization.
The first algebraic residual is below \(2.0\cdot10^{-12}\),
and the reference fit's dual-feasibility violation is below
\(2\cdot10^{-9}\).

\begin{figure}[!htbp]
\centering
\includegraphics[width=\linewidth]{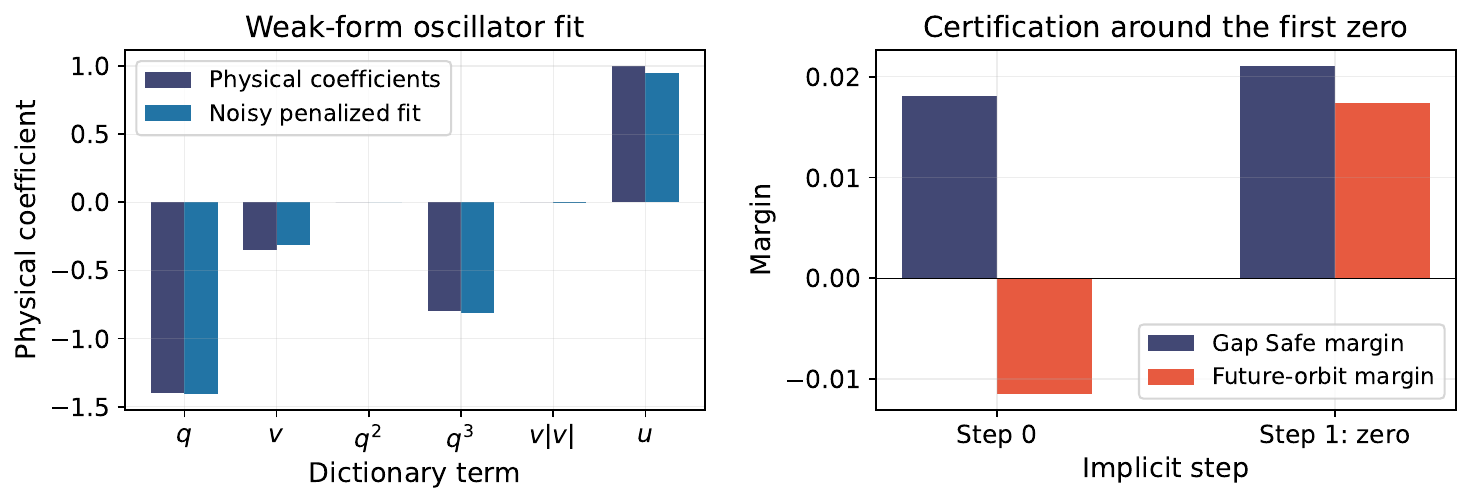}
\caption{Noisy weak-form oscillator regression.
Left: physical and fitted coefficients.
Right: computed optimizer and future-orbit margins
before and after the first resolvent step.}
\label{fig:duffing}
\end{figure}
\FloatBarrier

\paragraph*{Reproduction protocol.}
The companion scripts generate the four displayed vector figures,
the supplementary unequal-norm figure, Table~\ref{tab:small-step},
and \texttt{certificate\_results.json}.
They specify the parameters, seeds, preprocessing, and solver settings.
The JSON records full-precision constants, inactive indices,
package versions, center-program matrices, and constraint checks.
The requirements file specifies the tested environment.

Numerical-zero thresholds are \(10^{-10}\) for controlled
trajectories and \(10^{-9}\) for Wisconsin iterates.
Reference-fit thresholds are \(10^{-8}\) for Wisconsin
and \(10^{-7}\) for the oscillator.
Finite-run persistence is distinct from theorem-based permanence.
Stress-test margins are sampled every five iterates.
Tiny negative computed gaps are clipped to zero for diagnostics;
this is not outward rounding.
Dependency versions may change last digits or sampled onsets.
The analytic margins in Proposition~\ref{prop:centerexample}
are independent of software.
\section{Conclusions}\label{sec:conclusions}
Invariant balls give computable deadlines for permanent zeros
in finite-dimensional primal--dual flows and exact full resolvents,
without optimizer knowledge or rank assumptions.
Within this center family, we determine the smallest invariant
radius for the continuous flow and give a convex center-selection
problem with an exact finite conic formulation for a prescribed
positive feasibility margin.
The obstruction and optimal-center example describe the limits
of this construction.
The composite Bregman estimate proves rank-free convergence
and finite identification under strict dual margins.
With restricted injectivity, complementary gap-based deadlines
have discrete bounds that, multiplied by the step size, converge
to the continuous bound for fixed data, observed state, and
certificate parameters.
These sufficient bounds can be conservative.
Inexact guarantees require exact primal thresholding and a known
bound on the sum of all future dual residual norms.
In the oscillator application, fitted support may differ
from physical support.





\bibliographystyle{tfnlm}
\bibliography{references}

@article{Bregman,
  author  = {Bregman, L. M.},
  title   = {The relaxation method of finding the common point of convex sets and its application to the solution of problems in convex programming},
  journal = {USSR Comput. Math. Math. Phys.},
  volume  = {7},
  year    = {1967},
  pages   = {200--217}
}

@book{Brezis,
  author    = {Br{\'e}zis, H.},
  title     = {Op{\'e}rateurs maximaux monotones et semi-groupes de contractions dans les espaces de {Hilbert}},
  publisher = {North-Holland},
  address   = {Amsterdam},
  year      = {1973}
}

@article{Bonnefoy,
  author  = {Bonnefoy, A. and Emiya, V. and Ralaivola, L. and Gribonval, R.},
  title   = {Dynamic screening: accelerating first-order algorithms for the {Lasso} and group-{Lasso}},
  journal = {IEEE Trans. Signal Process.},
  volume  = {63},
  year    = {2015},
  pages   = {5121--5132},
  doi     = {10.1109/TSP.2015.2447503}
}

@article{Brunton,
  author  = {Brunton, S. L. and Proctor, J. L. and Kutz, J. N.},
  title   = {Discovering governing equations from data by sparse identification of nonlinear dynamical systems},
  journal = {Proc. Natl. Acad. Sci. USA},
  volume  = {113},
  year    = {2016},
  pages   = {3932--3937}
}

@article{Dantas,
  author  = {Dantas, C. F. and Soubies, E. and F{\'e}votte, C.},
  title   = {Expanding boundaries of {Gap Safe} screening},
  journal = {J. Mach. Learn. Res.},
  volume  = {22},
  number  = {236},
  year    = {2021},
  pages   = {1--57}
}

@misc{Diaz,
  author       = {D{\'i}az, M. and Izquierdo Lehmann, P. and Lu, H. and Yang, J.},
  title        = {Active set identification and rapid convergence for degenerate primal-dual problems},
  howpublished = {arXiv:2602.10436},
  year         = {2026},
  note         = {Theorems 4.2--4.3}
}

@article{Garrigos,
  author  = {Garrigos, G. and Rosasco, L. and Villa, S.},
  title   = {Thresholding gradient methods in {Hilbert} spaces: support identification and linear convergence},
  journal = {ESAIM Control Optim. Calc. Var.},
  volume  = {26},
  year    = {2020},
  note    = {Article 28}
}

@article{Liang,
  author  = {Liang, J. and Fadili, J. and Peyr{\'e}, G.},
  title   = {Local linear convergence analysis of primal--dual splitting methods},
  journal = {Optimization},
  volume  = {67},
  year    = {2018},
  pages   = {821--853},
  note    = {Theorems 3.3 and 3.11}
}

@article{LewisTian,
  author  = {Lewis, A. S. and Tian, T.},
  title   = {Identifiability, the {KL} property in metric spaces, and subgradient curves},
  journal = {Found. Comput. Math.},
  volume  = {25},
  year    = {2025},
  pages   = {905--942}
}

@misc{LuQu,
  author       = {Lu, M. and Qu, Z.},
  title        = {An adaptive proximal point algorithm framework and application to large-scale optimization},
  howpublished = {arXiv:2008.08784v2},
  year         = {2021}
}

@article{Messenger,
  author  = {Messenger, D. A. and Bortz, D. M.},
  title   = {Weak {SINDy}: {Galerkin}-based data-driven model selection},
  journal = {Multiscale Model. Simul.},
  volume  = {19},
  year    = {2021},
  pages   = {1474--1497}
}

@article{Ndiaye,
  author  = {Ndiaye, E. and Fercoq, O. and Gramfort, A. and Salmon, J.},
  title   = {{Gap Safe} screening rules for sparsity enforcing penalties},
  journal = {J. Mach. Learn. Res.},
  volume  = {18},
  number  = {128},
  year    = {2017},
  pages   = {1--33},
  note    = {Theorem 6 and Proposition 14}
}

@article{Rockafellar,
  author  = {Rockafellar, R. T.},
  title   = {Monotone operators and the proximal point algorithm},
  journal = {SIAM J. Control Optim.},
  volume  = {14},
  year    = {1976},
  pages   = {877--898}
}

@article{SaoudComp,
  author  = {Saoud, H.},
  title   = {Composite {Lyapunov} criteria for stability and convergence with applications to optimization dynamics},
  journal = {Mathematics},
  volume  = {13},
  year    = {2025},
  pages   = {3859}
}

@article{SaoudGeo,
  author  = {Saoud, H. and Th{\'e}ra, M. and Dao, M. N.},
  title   = {Geometric stability analysis for differential inclusions governed by maximally monotone operators},
  journal = {Set-Valued Var. Anal.},
  volume  = {34},
  year    = {2026},
  note    = {Article 22},
  doi     = {10.1007/s11228-026-00810-9}
}

@article{SaoudLoc,
  author  = {Dao, M. N. and Saoud, H. and Th{\'e}ra, M.},
  title   = {Locating theorems of differential inclusions governed by maximally monotone operators},
  journal = {SIAM J. Optim.},
  volume  = {33},
  year    = {2023},
  pages   = {2703--2720}
}

@misc{Wisconsin,
  author       = {Street, W. N. and Wolberg, W. H. and Mangasarian, O. L.},
  title        = {{Breast Cancer Wisconsin (Diagnostic)}},
  howpublished = {UCI Machine Learning Repository},
  year         = {1995},
  url          = {https://archive.ics.uci.edu/dataset/17/breast+cancer+wisconsin+diagnostic}
}

@article{Burger,
  author  = {Burger, M. and Osher, S.},
  title   = {Convergence rates of convex variational regularization},
  journal = {Inverse Problems},
  volume  = {20},
  number  = {5},
  year    = {2004},
  pages   = {1411--1421},
  doi     = {10.1088/0266-5611/20/5/005}
}

@article{ChenWei,
  author  = {Chen, L. and Wei, J.},
  title   = {Transformed primal--dual methods for nonlinear saddle point systems},
  journal = {Journal of Numerical Mathematics},
  volume  = {31},
  number  = {4},
  year    = {2023},
  pages   = {281--311},
  doi     = {10.1515/jnma-2022-0056}
}

@incollection{Parrilo,
  author    = {Parrilo, P. A.},
  title     = {Polynomial optimization, sums of squares, and applications},
  editor    = {Blekherman, G. and Parrilo, P. A. and Thomas, R. R.},
  booktitle = {Semidefinite Optimization and Convex Algebraic Geometry},
  series    = {MOS--SIAM Series on Optimization},
  volume    = {13},
  publisher = {SIAM},
  address   = {Philadelphia},
  year      = {2013},
  pages     = {47--157},
  doi       = {10.1137/1.9781611972290.ch3}
}

@article{LiSunToh,
  author  = {Li, X. and Sun, D. and Toh, K.-C.},
  title   = {A highly efficient semismooth {Newton} augmented {Lagrangian} method for solving {Lasso} problems},
  journal = {SIAM Journal on Optimization},
  volume  = {28},
  year    = {2018},
  pages   = {433--458},
  doi     = {10.1137/16M1097572}
}

@article{Tibshirani,
  author  = {Tibshirani, R.},
  title   = {Regression shrinkage and selection via the {Lasso}},
  journal = {J. R. Statist. Soc. Ser. B (Methodol.)},
  volume  = {58},
  year    = {1996},
  pages   = {267--288},
  doi     = {10.1111/j.2517-6161.1996.tb02080.x}
}

\end{document}